\documentclass[11pt]{article}
\usepackage[T1]{fontenc}
\usepackage{lmodern}
\usepackage[a4paper,margin=25mm]{geometry}
\usepackage{amsmath,amssymb,amsthm,mathtools}
\usepackage{microtype,booktabs,tabularx,array,enumitem}
\usepackage{tikz}
\usetikzlibrary{arrows.meta,calc,positioning}
\usepackage{flafter}
\usepackage[numbers,sort&compress]{natbib}
\usepackage[hidelinks]{hyperref}
\usepackage{url}

\newcommand{\cO}{\mathcal{O}}

\DeclareMathOperator*{\argmin}{arg\,min}
\DeclareMathOperator*{\argmax}{arg\,max}

\DeclarePairedDelimiter{\norm}{\lVert}{\rVert}

\DeclareFontFamily{U}{matha}{\hyphenchar\font45}
\DeclareFontShape{U}{matha}{m}{n}{
      <5> <6> <7> <8> <9> <10> gen * matha
      <10.95> matha10 <12> <14.4> <17.28> <20.74> <24.88> matha12
      }{}
\DeclareSymbolFont{matha}{U}{matha}{m}{n}
\DeclareFontSubstitution{U}{matha}{m}{n}

\DeclareFontFamily{U}{mathx}{\hyphenchar\font45}
\DeclareFontShape{U}{mathx}{m}{n}{
      <5> <6> <7> <8> <9> <10>
      <10.95> <12> <14.4> <17.28> <20.74> <24.88>
      mathx10
      }{}
\DeclareSymbolFont{mathx}{U}{mathx}{m}{n}
\DeclareFontSubstitution{U}{mathx}{m}{n}

\DeclareMathDelimiter{\vvvert}{0}{matha}{"7E}{mathx}{"17}

\numberwithin{equation}{section}
\newtheorem{theorem}{Theorem}[section]
\newtheorem{corollary}[theorem]{Corollary}
\newtheorem{lemma}[theorem]{Lemma}
\newtheorem{proposition}[theorem]{Proposition}

\theoremstyle{remark}
\newtheorem{remark}[theorem]{Remark}
\newcommand{\ResearchAgentSystem}{Colombo, our laboratory's internal auto-research system}
\newcommand{\R}{\mathbb R}
\newcommand{\eps}{\varepsilon}
\newcommand{\ip}[2]{\left\langle #1,#2\right\rangle}
\newcommand{\tO}{\widetilde{\mathcal O}}

\newcommand{\kap}{\kappa}
\newcommand{\calE}{\mathcal E}

\newcommand{\Mdet}{\mathfrak M_2^{\mathrm{det}}}
\newcommand{\Mrand}{\mathfrak M_2^{\mathrm{rand}}}

\usepackage{colortbl}
\newcommand{\authornames}{Fangqing Zhang, Haihan Zhang, Wendao Wu, Chenheng Zhang, Chunyuan Zheng, Cong Fang, Haoxuan Li, Zhouchen Lin}
\newcommand{\affiliation}{Peking University}
\newcommand{\email}{colombo.2026@outlook.com}
\newcommand{\mail}[1]{\href{mailto:#1}{\texttt{#1}}}
\newif\ifanonymous
\anonymousfalse
\title{\textbf{Matching Full-Hessian Oracle Complexity for\\
Nonconvex--Strongly-Concave Minimax Optimization}}
\ifanonymous
\author{Anonymous Authors}
\hypersetup{pdfauthor={Anonymous Authors}}
\else
\author{%
\parbox{\textwidth}{%
\centering
Fangqing Zhang$^{1,*}$\quad
Haihan Zhang$^{1,*}$\quad
Wendao Wu$^{1,*}$\\[0.28em]
Chenheng Zhang$^{1,*}$\quad
Chunyuan Zheng$^{1}$\\[0.28em]
Cong Fang$^{1,\dagger}$\quad
Haoxuan Li$^{1,\dagger}$\quad
Zhouchen Lin$^{1,\dagger}$\\[0.62em]
{\small $^{1}$\affiliation}\\[0.42em]
{\footnotesize
\mail{\email}
}\\[0.43em]
{\small $^{*}$Equal contribution.\qquad
$^{\dagger}$Corresponding authors.}
}%
}
\hypersetup{pdfauthor={\authornames}}
\fi
\hypersetup{pdftitle={Matching Full-Hessian Oracle Complexity for Nonconvex--Strongly-Concave Minimax Optimization}}
\date{}
\setlist[itemize]{leftmargin=*,itemsep=0.2em,topsep=0.2em}
\setlist[enumerate]{leftmargin=*,itemsep=0.2em,topsep=0.2em}
\begin{document}
\maketitle

\begin{abstract}
We characterize the oracle complexity of finding first-order
stationary points in smooth nonconvex--strongly-concave
(NC--SC) minimax optimization when each query provides
the full Hessian in addition to the function value and
gradient. For $\Phi(x)=\max_y f(x,y)$, assume that
$f$ has an $L$-Lipschitz gradient, a
$\rho$-Lipschitz Hessian, and is
$\mu$-strongly concave in $y$.
Let $\kappa=L/\mu$, let $\Delta$ denote the
initial value gap, and define
$r=\rho\kappa^{3/2}\varepsilon/L^2$.

We establish matching deterministic upper bounds
and randomized lower bounds, up to logarithmic
factors and additive initialization costs.
When the leading term dominates initialization,
the resulting minimax complexity is
\[
\widetilde{\Theta}\!\left(
\frac{\Delta L}{\varepsilon^2}\Psi_\kappa(r)
\right),
\qquad
\Psi_\kappa(r)=
\begin{cases}
r^{1/2}, & 0<r\le 1,\\
r^{2/7}, & 1<r<\kappa^{7/4},\\
\sqrt{\kappa}, & r\ge\kappa^{7/4}.
\end{cases}
\]
Despite the stronger oracle, the optimal
complexity exhibits a nontrivial dependence
on accuracy, conditioning, and Hessian smoothness.
In particular, these regimes exhibit accuracy
dependencies $\varepsilon^{-3/2}$,
$\varepsilon^{-12/7}$, and $\varepsilon^{-2}$,
respectively.

The upper bounds combine second-order regularity estimates
for a saddle Moreau envelope with certified accelerated
reduced-dual solves and amortized warm starts.
The lower bounds combine an all-derivative lift with a serial
composition of private-handle constructions, using hard instances
chosen independently of the algorithm's realized random seed.
The resulting characterization quantifies improvements over
the classical first-order benchmark under gradient smoothness
alone and identifies the regime in which that benchmark is recovered.
\end{abstract}

\noindent\textbf{Keywords:} nonconvex--strongly-concave minimax optimization;
full-Hessian oracle; oracle complexity; stationary points; randomized lower
bounds.\par

{\small
\begin{samepage}
\noindent\textbf{AI Usage.}
Nearly the entire research pipeline for this paper was carried out by
\ResearchAgentSystem{}, powered by GPT-5.6 Sol. The system also conducted a
Lean-backed article audit of the resulting manuscript. The authors subsequently reviewed and approved the
mathematical claims, presentation, and formal artifacts, and take
responsibility for the final manuscript. The complete Lean audit report and the system's technical report will be made public at a later date.
\par
\end{samepage}
}
{\small
\begin{samepage}
\paragraph{Publication Intent and Recognition of Independent Work.}

This preprint is released as a research output of our study of
automated scientific research, for public scrutiny and discussion.
As we do not intend to submit this manuscript to a journal or
conference, it has not undergone the full editorial refinement
and internal review we would normally require for submission.
The authors remain responsible for its contents and for correcting
any errors or inaccurate attribution.

We recognize that releasing these results may affect researchers
who have independently been pursuing the same questions, including
work that is not yet public.
If this applies to your work, we invite you to contact us at
\email.
A brief description of the overlap, together with any dated
materials you are comfortable sharing, would help us understand
its scope and chronology.
Such materials might include draft excerpts, presentation slides,
or revision records; a complete manuscript or unpublished proof
is not required to begin the conversation.

We will review these cases carefully and update citations,
acknowledgments, and accounts of independent or concurrent work
as appropriate, including revising priority claims where warranted.
Nonpublic materials will be treated confidentially and used only
to assess the overlap unless their authors authorize otherwise.
We will obtain explicit permission from the authors before
publicly discussing such materials or providing them to an
automated research system.
\par
\end{samepage}
}

\section{Introduction}\label{sec:intro}

We study the unconstrained minimax problem
\[
  \min_{x\in\R^{d_x}}\Phi(x),
  \qquad
  \Phi(x):=\max_{y\in\R^{d_y}}f(x,y),
\]
when \(f\) can be nonconvex in \(x\) but is strongly concave in \(y\).
Strong concavity makes the inner maximizer unique and differentiable enough
for the value function to satisfy
\(\nabla\Phi(x)=\nabla_x f(x,y^\star(x))\).  The algorithmic target is a
true value-function stationary point,
\[
  \norm{\nabla\Phi(\widehat x)}\le\varepsilon,
\]
rather than a small joint gradient at an arbitrary primal--dual pair.

This problem is a standard model for adversarial learning, distributionally
robust optimization, and learning with an inner response variable.  Its
complexity also isolates a basic theoretical question: once a query returns
the full Hessian at the same unit cost as the gradient, how much can curvature
information improve the best possible dependence on accuracy, condition
number, and Hessian smoothness?  First-order work established the sharp
\(\widetilde\Theta(\Delta L\sqrt\kappa\,\varepsilon^{-2})\) frontier under
gradient smoothness alone~\cite{lin2020minimax,zhang2021complexity,li2021lower}.
That rate cannot describe the benefit of a Lipschitz Hessian or a bundled
second-order query.

The additional regularity does not lead to one uniformly optimal power law.
The dimensionless parameter
\[
  r=\frac{\rho\kappa^{3/2}\varepsilon}{L^2}
\]
compares the cubic scale induced by Hessian Lipschitzness with the
first-order scale of the value function.  We show that it divides the problem
into three regimes.  At small \(r\), an envelope cubic method attains the
nonconvex second-order rate \(\varepsilon^{-3/2}\), with the saddle geometry
contributing \(\kappa^{3/4}\).  At intermediate \(r\), the optimal rate is
\(\varepsilon^{-12/7}\); it comes from amortizing accelerated second-order
inner solves along an outer envelope-gradient trajectory.  At large \(r\),
second-order smoothness is too weak at the requested accuracy to improve on
the first-order NC--SC frontier.  Figure~\ref{fig:problem-region-map}
summarizes this three-regime structure.

\begin{figure}[htbp]
\centering
\begin{tikzpicture}[
  x=1.45cm,
  y=0.95cm,
  >=Latex,
  every node/.style={font=\footnotesize},
  region/.style={align=center},
  rate/.style={fill=white,inner sep=1.5pt}
]
  \fill[blue!8] (-2.10,-1.48) rectangle (0,1.65);
  \fill[orange!10] (0,-1.48) rectangle (3.50,1.65);
  \fill[black!6] (3.50,-1.48) rectangle (6.15,1.65);

  \draw[dashed,gray] (0,-1.48) -- (0,1.65);
  \draw[dashed,gray] (3.50,-1.48) -- (3.50,1.65);
  \draw[->,semithick] (-2.10,-1.28) -- (6.35,-1.28)
    node[right] {\(\log r\)};
  \draw[->,semithick] (-1.90,-1.48) -- (-1.90,1.75)
    node[above] {\(\log \Psi_\kappa(r)\)};

  \draw[blue!80!black,very thick]
    (-1.75,-0.875) -- (0,0) -- (3.50,1.00) -- (6.05,1.00);
  \fill[blue!80!black] (0,0) circle (2pt);
  \fill[blue!80!black] (3.50,1.00) circle (2pt);

  \node[below] at (0,-1.48) {\(r=1\)};
  \node[below] at (3.50,-1.48) {\(r=\kappa^{7/4}\)};
  \node[region] at (-1.05,1.43) {small \(r\)\\cubic branch};
  \node[region] at (1.75,1.43) {intermediate \(r\)\\amortized inner acceleration};
  \node[region] at (4.82,1.43) {large \(r\)\\first-order cap};

  \node[rate,rotate=18.1] at (-1.08,0.02) {slope \(1/2\)};
  \node[rate,rotate=12] at (1.77,0.53) {slope \(2/7\)};
  \node[rate] at (4.80,0.84) {slope \(0\)};

  \node[region] at (-1.05,-0.94) {\(A_1r^{1/2}=A_2\)};
  \node[region] at (1.75,-0.94) {\(A_1r^{2/7}\)};
  \node[region] at (4.82,-0.94) {\(A_1\sqrt\kappa\)};
  \node[region,anchor=north] at (-1.05,-1.38) {\(\varepsilon^{-3/2}\)};
  \node[region,anchor=north] at (1.75,-1.38) {\(\varepsilon^{-12/7}\)};
  \node[region,anchor=north] at (4.82,-1.38) {\(\varepsilon^{-2}\)};
\end{tikzpicture}
\caption{Schematic problem-region map for the normalized complexity multiplier
\(\Psi_\kappa(r)\).  The horizontal and vertical coordinates are logarithmic;
only the two crossover locations and the three asymptotic slopes are
meaningful.  When \(\kappa=1\), the two crossover locations coincide and the
intermediate region collapses.}
\label{fig:problem-region-map}
\end{figure}

\paragraph{Contributions.}
The paper makes four contributions.
\begin{itemize}
\item We characterize the bundled full-Hessian minimax query complexity by a
single three-phase function \(A_1\Psi_\kappa(r)\), with matching deterministic
upper and randomized lower bounds up to logarithmic and initialization terms.
\item For the upper bound, we introduce a saddle Moreau envelope whose
Hessian is \(O(\rho\kappa^{3/2})\)-Lipschitz, but whose proximal saddle can be
solved through a reduced dual with \(O(\rho)\)-Lipschitz Hessian.  Eliminating
the primal block before accelerating the dual is essential: the eliminated
block has condition number at most three, and the reduced-dual contraction
cost is
\(\widetilde O(1+(\rho D/\mu)^{2/7})\), where $D$ is an upper bound on the initial distance
to the reduced-dual minimizer.
\item We amortize warm-start distances rather than paying their worst-case
value at every outer step.  The squared-step energy of envelope gradient
descent yields the middle branch, while the cubic-step energy yields the
high-accuracy branch.  A certified coupled A-HPE line search makes the
second-order inner solver valid under simultaneous gradient and Hessian
simulation errors.
\item For the lower bound, an all-derivative lift gives the small-\(r\)
branch.  For \(r\ge1\), a private-handle packet hides an exactly-flat tail
even after the handle becomes public.  A low-gradient endpoint difference
allows smooth relay gates to serialize \(\Theta(A_1)\) packets.  The product
argument is carried out on one fixed instance sampled independently of the
algorithm's random seed, which gives a per-instance randomized lower bound.
\end{itemize}

The full technical proof appears in the appendices.  The main text keeps the
problem specification, theorem, and the mechanisms that determine the three
rates.

\begin{table}[t]
\centering
\small
\setlength{\tabcolsep}{3.5pt}
\renewcommand{\arraystretch}{1.27}
\renewcommand{\tabularxcolumn}[1]{m{#1}}

\begin{tabularx}{\textwidth}{@{}
>{\centering\arraybackslash}m{0.19\textwidth}
>{\centering\arraybackslash}m{0.17\textwidth}
>{\centering\arraybackslash}m{0.37\textwidth}
>{\centering\arraybackslash}X@{}}

\toprule
\textbf{Work}
& \textbf{Oracle and target}
& \textbf{Complexity}
& \textbf{Setting} \\
\midrule

Lin et al.~\cite{lin2020minimax}
& First order; FOSP
& $\widetilde O(
\Delta L\sqrt\kappa\,\varepsilon^{-2})$
& NC--SC; \newline gradient smoothness \\

\addlinespace[2pt]

Zhang et al.~\cite{zhang2021complexity};\newline
Li et al.~\cite{li2021lower}
& First order; FOSP
& $\Omega(
\Delta L\sqrt\kappa\,\varepsilon^{-2})$
& General smoothness; \newline lower bounds \\

\addlinespace[2pt]

Wang et al.~\cite{wang2024higher}
& First order; FOSP
& $\widetilde O(
\sqrt\kappa L^{1/2}L_2^{1/4}
\varepsilon^{-7/4})$ (UB);\newline
$\Omega(
\sqrt\kappa L^{3/7}L_2^{2/7}
\varepsilon^{-12/7})$ (LB)
& Hessian-Lipschitz $\Phi$ \\

\addlinespace[2pt]

Luo et al.~\cite{luo2022mcn}
& Second order; SOSP
& $O(
\kappa^{3/2}\sqrt\rho\,
\varepsilon^{-3/2})$
& NC--SC; upper bound \\

\addlinespace[2pt]

Recent SOSP methods~
\cite{yao2024trustregion,
wang2025gradientnorm,
chen2026hsda,
ma2026singleloop}
& Second order or HVP; SOSP
& $\widetilde O(
\varepsilon^{-3/2})$
iterations
& Method-dependent factors \\

\addlinespace[2pt]

Carmon et al.~\cite{carmon}
& All derivatives; FOSP
& $\Omega\!\left(
\Delta\min\{
L\varepsilon^{-2},
\sqrt\rho\,\varepsilon^{-3/2}
\}\right)$
& Single-level nonconvex \\

\addlinespace[2pt]
\midrule

\textbf{This work}
& Bundled second order; FOSP
& $\widetilde\Theta(
A_1\Psi_\kappa(r))$
& Deterministic UB; \newline randomized LB \\

\bottomrule
\end{tabularx}

\caption{
Comparison of representative oracle-complexity results for NC--SC minimax optimization and related single-level nonconvex optimization problem. Here, \(L_2\) denotes the Hessian Lipschitz constant of the value function \(\Phi\), FOSP and SOSP denote first- and
second-order stationary points, respectively;
UB and LB denote upper and lower bounds.
Rates follow the respective oracle models
and smoothness assumptions.
}
\label{tab:rate-comparison}
\end{table}
\section{Related Work}\label{sec:related}
Table~\ref{tab:rate-comparison} summarizes the most
relevant oracle-complexity results and highlights the
distinction between the present work and existing
first- and second-order methods.

\paragraph{First-order NC--SC complexity.}
Lin, Jin, and Jordan~\cite{lin2020minimax} used accelerated proximal-point
ideas to obtain a near-optimal
\(\widetilde O(\Delta L\sqrt\kappa\,\varepsilon^{-2})\) gradient bound for
value-function stationarity.  Zhang et al.~\cite{zhang2021complexity}
established the corresponding \(\Omega(\Delta L\sqrt\kappa\,\varepsilon^{-2})\)
lower bound in the general smooth setting and gave an acceleration scheme
matching it up to logarithms.  Li et al.~\cite{li2021lower} independently
proved a \(\sqrt\kappa\,\varepsilon^{-2}\) deterministic first-order lower
bound under their normalization.  These results identify the large-\(r\)
endpoint of our frontier, but they neither charge nor exploit a full-Hessian
query.

\paragraph{Higher smoothness and second-order information.}
Wang, Zhang, and Zhang~\cite{wang2024higher} showed that a first-order method
can exploit a Lipschitz Hessian of \(\Phi\), obtaining an
\(\varepsilon^{-7/4}\) upper dependence and an
\(\varepsilon^{-12/7}\) lower dependence under the source's normalization.
Their oracle remains first order and their smoothness assumption is imposed
directly on the value function.  Luo, Li, and Chen~\cite{luo2022mcn} studied
a different target: an approximate second-order stationary point of
\(\Phi\).  Their Minimax Cubic Newton method uses second-order oracle calls
with \(O(\kappa^{3/2}\sqrt\rho\,\varepsilon^{-3/2})\) dependence.  Our target
is first-order stationarity, our Hessian-Lipschitz assumption is on the joint
function \(f\), and the goal is a matching minimax query characterization.
Subsequent trust-region, gradient-norm-regularized, homogeneous, and
single-loop Newton schemes retain an
\(\widetilde O(\varepsilon^{-3/2})\) iteration dependence for approximate
second-order stationarity, while addressing subproblem structure, Hessian-
vector implementation, or the inner-loop bottleneck in different
ways~\cite{yao2024trustregion,wang2025gradientnorm,chen2026hsda,ma2026singleloop}.
These are algorithmic upper bounds for the stronger SOSP target; none gives a
matching bundled-oracle lower bound for the FOSP criterion studied here.

\paragraph{Second-order acceleration and lower-bound tools.}
The upper proof uses the large-step accelerated hybrid proximal
extragradient framework of Monteiro and Svaiter~\cite{monteiro2013ahpe} and
the implementable high-order line-search machinery of Jiang, Wang, and
Zhang~\cite{jiang2021tensor}.  The high-accuracy lower branch starts from the
randomized all-derivative stationary-point lower bound of Carmon et
al.~\cite{carmon}.  Our new work lies in the NC--SC reductions, the
reduced-dual oracle simulation, the warm-start accounting, and the
private-handle serial construction.

\section{Problem Setup}\label{sec:setup}

Let \(f:\R^{d_x}\times\R^{d_y}\to\R\) be twice continuously
differentiable.  All norms are Euclidean and operator norms are induced by
the Euclidean norm.  For \(z=(x,y)\), assume
\[
 \norm{\nabla f(z)-\nabla f(z')}\le L\norm{z-z'},\qquad
 \norm{\nabla^2f(z)-\nabla^2f(z')}\le\rho\norm{z-z'}.
\]
For every \(x\), the function \(f(x,\cdot)\) is
\(\mu\)-strongly concave.  Hence
\[
 y^\star(x)=\argmax_y f(x,y),\qquad
 \Phi(x)=f(x,y^\star(x)),\qquad
 \kappa=\frac L\mu
\]
are well defined, with \(\kappa\ge1\).  The starting pair obeys
\[
 \Phi(x_0)-\inf_x\Phi(x)\le\Delta,
 \qquad
 \norm{y_0-y^\star(x_0)}\le R_y.
\]

\paragraph{Oracle and computation model.}
One charged query at any \((x,y)\) returns the exact bundle
\[
 \mathcal O_2(x,y)
 =\bigl(f(x,y),\nabla f(x,y),\nabla^2f(x,y)\bigr).
\]
All measurable computation using returned real numbers and matrices is free,
including exact linear algebra and exact minimization of models constructed
from the transcript.  Algorithms may choose arbitrary adaptive query points
and may return an unqueried point.  A randomized algorithm uses an
instance-independent random seed and must return \(\widehat x\) satisfying
\(\norm{\nabla\Phi(\widehat x)}\le\varepsilon\) with probability at least
\(2/3\) on every fixed legal instance.  The worst case ranges over every
finite dimension.  We write \(\Mdet(\varepsilon)\) and
\(\Mrand(\varepsilon)\) for the deterministic and randomized minimax query
complexities.

\paragraph{Dimensionless regimes and initialization.}
Define
\[
 A_1=\Delta L\varepsilon^{-2},\qquad
 A_2=\Delta\sqrt\rho\,\kappa^{3/4}\varepsilon^{-3/2},\qquad
 r=\frac{\rho\kappa^{3/2}\varepsilon}{L^2},\label{eq:def-r}
\]
so that \(A_2=A_1\sqrt r\), and let
\[
 \Psi_\kappa(r)=
 \begin{cases}
 r^{1/2},&0<r\le1,\\
 r^{2/7},&1\le r\le\kappa^{7/4},\\
 \kappa^{1/2},&r\ge\kappa^{7/4}.
 \end{cases}\label{eq:Psi}
\]
The one-time cost of bringing the initial dual point to the first proximal
saddle is controlled by
\[
 D_{y,0}=R_y+\kappa\sqrt{\Delta/L},\qquad
 I_0=\widetilde{\mathcal O}\!\left(
 1+\min\left\{\sqrt\kappa,
 \left(\frac{\rho D_{y,0}}\mu\right)^{2/7}\right\}\right).\label{eq:init}
\]

\section{Main Results}\label{sec:results}

\begin{theorem}[Three-phase frontier]\label{thm:main}
There are universal constants \(c,C>0\) with the following properties.
The deterministic bundled-oracle complexity satisfies
\[
 \Mdet(\varepsilon)
 \le \widetilde{\mathcal O}\!\left(
 I_0+\min\left\{
 A_2(1+r^{1/7}),\ A_1(1+r^{2/7}),\ \sqrt\kappa A_1
 \right\}\right).\label{eq:main-upper}
\]
If \(A_1\ge C\), then every randomized algorithm with a query budget below
the following lower bound fails with probability at least \(1/3\) on some
deterministic legal instance chosen independently of its realized random
seed.  The hard instance has exact dual initialization \(R_y=0\), and
\[
 \Mrand(\varepsilon)
 \ge cA_1\Psi_\kappa(r).\label{eq:main-lower}
\]
Consequently, whenever the main term dominates \(I_0+1\),
\[
 \Mrand(\varepsilon)
 \asymp_{\log}
 \Mdet(\varepsilon)
 \asymp_{\log} A_1\Psi_\kappa(r).\label{eq:main-frontier}
\]
Equivalently, the three rates are
\[
 \begin{cases}
 \Delta\sqrt\rho\,\kappa^{3/4}\varepsilon^{-3/2},&r\le1,\\[1mm]
 \Delta\rho^{2/7}\kappa^{3/7}L^{3/7}\varepsilon^{-12/7},
       &1\le r\le\kappa^{7/4},\\[1mm]
 \Delta L\sqrt\kappa\,\varepsilon^{-2},&r\ge\kappa^{7/4}.
 \end{cases}\label{eq:three-rates}
\]
\end{theorem}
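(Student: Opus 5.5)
The proof splits into an upper half and a lower half, joined by a short comparison. For the upper bound we will establish three separate algorithms, each giving one term of the minimum, and run the best one; the additive $I_0$ is the one-time cost of solving the first proximal saddle from $(x_0,y_0)$.

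\emph{Third term, $\sqrt\kappa A_1$.} This is the first-order accelerated proximal-point bound of Lin et al. and Zhang et al. A gradient query is a sub-bundle of $\mathcal O_2$, so it carries over unchanged.

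\emph{Common structure for the other two terms.} Both are built on the saddle Moreau envelope
\[
\Phi_\lambda(x)=\min_u\max_y\Bigl\{f(u,y)+\tfrac{L}{2}\|u-x\|^2\Bigr\}
\]
with proximal weight $L$, i.e.\ twice the weak-convexity modulus. We will verify three facts:
\begin{itemize}
\item $\Phi_\lambda$ has $O(L)$-Lipschitz gradient;
\item its Hessian is $O(\rho\kappa^{3/2})$-Lipschitz, obtained by differentiating the implicit-function formula for the proximal saddle $(u^\star(x),y^\star(x))$;
\item $\|\nabla\Phi(u^\star)\|\lesssim\|\nabla\Phi_\lambda(x)\|$, so stationarity of the envelope transfers to $\Phi$.
\end{itemize}
Each evaluation of $\nabla\Phi_\lambda$ or $\nabla^2\Phi_\lambda$ requires solving the proximal saddle. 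We do this by eliminating $u$, which is a strongly convex block with condition number at most $3$, leaving a reduced dual $-\min_u(\cdot)$ in $y$. That reduced dual is $\mu$-strongly convex with $O(\rho)$-Lipschitz Hessian. We solve it with a certified A-HPE second-order method (Monteiro--Svaiter large-step, with the Jiang--Wang--Zhang line search), which contracts the distance by a constant factor in $\widetilde O(1+(\rho D/\mu)^{2/7})$ queries from initial distance $D$. The step needing care is that the inner oracle is simulated inexactly, because $u$ is only approximately eliminated. The line search must therefore be shown robust to gradient and Hessian errors, with errors controlled by logarithmically many extra inner steps.

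\emph{Second term, $A_1(1+r^{2/7})$.} Run gradient descent on $\Phi_\lambda$ for $T\asymp A_1$ steps, warm-starting each inner solve. The warm-start distance at step $t$ is $D_t\lesssim\kappa\|x_{t+1}-x_t\|$. We then bound
\[
\sum_t D_t^{2/7}\le T^{5/7}\Bigl(\sum_t D_t^{2}\Bigr)^{1/7}
\]
by H\"older, and bound the squared-step energy by $\sum_t\|x_{t+1}-x_t\|^2\lesssim\Delta/L$. Substituting gives $A_1 r^{2/7}$ after simplification.

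\emph{First term, $A_2(1+r^{1/7})$.} Apply cubic-regularized Newton to $\Phi_\lambda$ with Hessian constant $\rho\kappa^{3/2}$. This takes $A_2$ outer steps, and the cubic-step energy satisfies $\sum_t\|x_{t+1}-x_t\|^3\lesssim\Delta/(\rho\kappa^{3/2})$. The same H\"older amortization then yields the factor $r^{1/7}$. The main bookkeeping in this branch is checking the exponents in both amortizations.

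\emph{Lower bound, branch $r\le1$.} Lift the Carmon et al.\ randomized all-derivative hard function into a minimax instance $f(x,y)=g(x)\cdot(\ldots)-\tfrac\mu2\|y-Bx\|^2$ so that $\Phi$ is a rescaled Carmon function with Hessian constant $\rho\kappa^{3/2}$. This gives $\Omega(A_2)=\Omega(A_1 r^{1/2})$.

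\emph{Lower bound, branch $r\ge1$.} Build a private-handle packet: a dual chain of length $k$ with random rotations hiding a flat tail, where the chain is revealed only one coordinate per query. Each packet costs $k$ queries and lowers $\Phi$ by roughly $\varepsilon^2/L$ while keeping $\|\nabla\Phi\|>\varepsilon$ until it is traversed. Taking $k\asymp\min\{r^{2/7},\sqrt\kappa\}$ and balancing against the Hessian-Lipschitz constraint $\rho$ fixes the packet scale. We then compose $\Theta(A_1)$ packets serially with smooth relay gates, so that a packet becomes visible only once the previous one is finished.

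\emph{Main obstacle.} The hardest step is making this serial composition rigorous against a randomized algorithm on a fixed instance. We need a union or product bound on the probability of prematurely discovering hidden handles, summed over all packets, with the instance sampled independently of the algorithm's seed; averaging and a Yao-type argument then give a deterministic instance on which the algorithm fails. We also need the gates to preserve the $L$, $\rho$ and $\mu$ bounds, together with $R_y=0$ and the $\Delta$ budget.

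\emph{Comparison.} It remains to check that $\min\{A_2(1+r^{1/7}),A_1(1+r^{2/7}),\sqrt\kappa A_1\}$ is within constants of $A_1\Psi_\kappa(r)$ in each of the three regimes. For $r\le1$ the first term gives $A_1r^{1/2}$. For $1\le r\le\kappa^{7/4}$ the second term gives $A_1r^{2/7}$, since $A_2r^{1/7}=A_1r^{9/14}$ is larger. For $r\ge\kappa^{7/4}$ the third term gives $\sqrt\kappa A_1$. Substituting the definitions of $A_1$ and $r$ then yields the explicit rates in \eqref{eq:three-rates}.
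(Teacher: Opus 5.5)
Your plan follows the paper's architecture: saddle Moreau envelope, primal elimination to a reduced dual, certified A-HPE inner solves, warm-start amortization along gradient-descent and cubic trajectories, a first-order cap, an all-derivative lift, and relayed private-handle packets. However, the amortization step that produces the exponents fails as you state it.

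\textbf{The warm-start amortization.} You bound the warm-start distance by $D_t\lesssim\kappa\|x_{t+1}-x_t\|$. That is the naive estimate: $u_x$ is $2$-Lipschitz and $y^\star$ is $\kappa$-Lipschitz. Use it with the correct H\"older inequality $\sum_t D_t^{2/7}\le T^{6/7}\bigl(\sum_t D_t^2\bigr)^{1/7}$ (your $T^{5/7}$ already fails for constant $D_t$). This gives
\[
 \Bigl(\frac{\rho}{\mu}\Bigr)^{2/7}\sum_t D_t^{2/7}
 \lesssim\Bigl(\frac{\rho\kappa^2}{L}\Bigr)^{2/7}A_1^{6/7}\Bigl(\frac{\Delta}{L}\Bigr)^{1/7}
 =\kappa^{1/7}A_1r^{2/7},
\]
and likewise $\kappa^{1/7}A_2r^{1/7}$ in the cubic branch. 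This polynomial loss breaks the match with the lower bound $A_1r^{2/7}$. It also moves the crossover with $\sqrt\kappa A_1$ to $r=\kappa^{5/4}$.

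\textbf{The missing ingredient} is the square-root sensitivity of the proximal saddle. Differentiating the saddle equations gives $(H_{uu}+2LI)p+H_{uy}q=2Lh$ and $H_{yu}p+H_{yy}q=0$. Pair these with $(p,q)$ and use $H_{uu}\succeq-LI$ and $-H_{yy}\succeq\mu I$. This yields $L\|p\|^2+\mu\|q\|^2\le2L\|h\|\|p\|$, hence $\|q\|\le2\sqrt\kappa\|h\|$ and $D_t\le\tau_{t-1}+2\sqrt\kappa\|x_t-x_{t-1}\|$. With $\sqrt\kappa$ in place of $\kappa$, the same computation gives exactly $A_1r^{2/7}$ and $A_2r^{1/7}$; the $\tau_{t-1}$ term is a separate fixed-tolerance cost.

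The same bound is what your claimed $O(\rho\kappa^{3/2})$ Hessian-Lipschitz constant rests on. That constant comes from $D^3\mathcal E(x)[h_1,h_2,h_3]=D^3f(z_x)[Dz_xh_1,Dz_xh_2,Dz_xh_3]$ with $\|Dz_x\|\le2\sqrt{1+\kappa}$. With $\kappa$-sensitivity you would only get $\rho\kappa^3$ there, so your two estimates are mutually inconsistent.

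\textbf{The prox weight.} Both the pairing argument and your ``condition number at most $3$'' need the prox term $L\|u-x\|^2$, which has curvature $2L$. With $\frac L2\|u-x\|^2$ as you displayed it, the $u$-block Hessian lies only in $[0,2L]$.

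\textbf{The lower bound for $r\le1$.} The $\kappa^{3/4}$ gain appears only if the hard function sits on the dual variable. The paper uses $f(x,y)=F(y)-\mu\|y-bx-c\|^2$ with $b^2=(\kappa-3)/2$, where $F$ has $\mu$-Lipschitz gradient and $\rho$-Lipschitz Hessian. The inner optimality condition then gives $\|\nabla F(bx+c)\|\le\frac{3}{2b}\|\nabla\Phi(x)\|$, so Carmon's bound is invoked at accuracy $\delta\asymp\varepsilon/\sqrt\kappa$. Your $g(x)\cdot(\ldots)$ is unspecified. A hard function of $x$ alone leaves $\Phi$ with Hessian constant $\rho$ and loses the $\kappa^{3/4}$.

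\textbf{The lower bound for $r\ge1$.} You name the serial composition as the main obstacle but supply no mechanism. The paper needs two things:
\begin{enumerate}
\item an active--inactive endpoint difference that depends only on the handle, with value, gradient, Hessian and Hessian-Lipschitz scales $E$, $\mu D_q$, $\mu$ and $\mu/D_q$. These scales let the gate products fit the $L$ and $\rho$ budgets, and a Young/Schur estimate then preserves $\mu$-strong concavity in $(\mathbf y,\mathbf s)$.
\item a packet-suppressed virtual oracle that grants the next tail column unconditionally. Adaptive Haar concentration then applies without no-hit conditioning, and averaging over frames fixes one instance independent of the seed.
\end{enumerate}
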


The theorem is a statement about the exact bundled oracle.  In particular,
it does not count gradients and Hessians separately, does not assume a
finite-sum structure, and does not bound the dimensions.  The lower bound is
therefore allowed to choose a sufficiently high but finite dimension.

\paragraph{External interfaces.}
Three general results are used as named black boxes: the randomized
all-derivative stationary-point lower bound of Carmon et
al.~\cite{carmon}, the large-step A-HPE convergence theorem of Monteiro and
Svaiter~\cite{monteiro2013ahpe}, and the implementable coupled
\(\beta\)-line search of Jiang, Wang, and Zhang~\cite{jiang2021tensor}.
Every NC--SC-specific reduction, simulation, packet construction, shell,
and fixed-instance coupling is proved in the appendices.

\section{Proof Overview and Main Mechanisms}\label{sec:overview}

\subsection{Upper bound: envelope reduction and amortized inner solves}

For an outer center \(x\), define the saddle Moreau envelope
\[
 G_x(u,y)=f(u,y)+L\norm{u-x}^2,
 \qquad
 \calE(x)=\min_u\max_yG_x(u,y).
\]
If \((u_x,v_x)\) is its unique saddle, the saddle first-order conditions
and Danskin's theorem give the exact transfer
\[
 \nabla\calE(x)=2L(x-u_x)=\nabla\Phi(u_x).\label{eq:overview-transfer}
\]
Thus an envelope stationary point yields a value-function stationary point
without a surrogate-stationarity loss.  Implicit differentiation further
gives
\[
 \mathrm{Lip}(\nabla\calE)\le2L,
 \qquad
 \mathrm{Lip}(\nabla^2\calE)\le8\rho(1+\kappa)^{3/2}.
\]

The proximal saddle is not solved as a joint root problem.  For fixed
\((x,y)\), eliminate
\[
 u_x(y)=\argmin_u G_x(u,y),
 \qquad
 \psi_x(y)=-\min_uG_x(u,y).
\]
The primal Hessian lies between \(LI\) and \(3LI\), while the reduced dual
satisfies
\[
 \mu I\preceq\nabla^2\psi_x(y)\preceq2LI,
 \qquad
 \mathrm{Lip}(\nabla^2\psi_x)=O(\rho).
\]
The accelerated second-order contraction derived in
Appendix~\ref{sec:warm-solve} therefore solves a proximal saddle from dual
distance \(D\) in
\[
 \widetilde O\!\left(
 1+\min\left\{\sqrt\kappa,
 \left(\frac{\rho D}{\mu}\right)^{2/7}\right\}
 \right)
\]
bundled calls.  Let \(\widetilde v_{t-1}\) be the certified reduced-dual
approximation returned at center \(x_{t-1}\), and define
\(\tau_{t-1}=\norm{\widetilde v_{t-1}-v_{x_{t-1}}}\) and
\(D_t=\norm{\widetilde v_{t-1}-v_{x_t}}\).  When
\(\widetilde v_{t-1}\) initializes the reduced-dual solve at \(x_t\),
consecutive centers satisfy the warm-start estimate
\[
 D_t\le\tau_{t-1}+2\sqrt\kappa\norm{x_t-x_{t-1}}.
\]

There are three outer choices.  Envelope gradient descent uses
\(N_1=O(A_1)\) iterations and
\(\sum_t\norm{x_{t+1}-x_t}^2=O(\Delta/L)\).  H\"older's inequality applied
to the \(2/7\)-power inner cost gives
\[
 Q_{\mathrm{GD}}
 \le\widetilde O\bigl(A_1(1+r^{2/7})\bigr).
\]
Envelope cubic regularization uses \(N_2=O(A_2)\) iterations and
\(\sum_t\norm{x_{t+1}-x_t}^3=O(\Delta/(\rho\kappa^{3/2}))\), which gives
\[
 Q_{\mathrm{cubic}}
 \le\widetilde O\bigl(A_2(1+r^{1/7})\bigr).
\]
Finally, using accelerated first-order inner solves caps the total cost by
\(\widetilde O(\sqrt\kappa A_1)\).  Taking the minimum yields
\eqref{eq:main-upper}; the crossover points are \(r=1\) and
\(r=\kappa^{7/4}\).

\subsection{Lower bound: two constructions and one fixed instance}

For \(r\le1\), start from a smooth single-level function that is hard even
when every derivative is revealed.  The quadratic coupling
\[
 f(x,y)=F(y)-\mu\norm{y-bx-c}^2,
 \qquad
 b^2=\frac{\kappa-3}{2},
\]
preserves the joint \((L,\rho)\) promises and transfers small
\(\norm{\nabla\Phi(x)}\) to small \(\norm{\nabla F(bx+c)}\).  This yields
\[
 \Omega\!\left(
 \Delta\min\left\{L\varepsilon^{-2},
 \sqrt\rho\,\kappa^{3/4}\varepsilon^{-3/2}\right\}
 \right),
\]
which is the small-\(r\) branch and the baseline beyond it.

For \(r\ge1\), choose a packet length
\[
 q\asymp\min\{r^{2/7},\sqrt\kappa\}.
\]
An exactly-flat chain hides \(q\) orthogonal directions.  Its public
quadratic amplifier exposes a handle and the first path direction but cannot
generate the tail by repeated Hessian algebra.  A Fenchel lift turns distance
from the hidden packet minimizer into value-function gradient.  Crucially,
the active and inactive endpoints differ only through a bounded handle term
whose value, gradient, Hessian, and Hessian-Lipschitz scales are small enough
to pass through a smooth relay gate.

The relay serializes \(T\asymp A_1\) independent packets: true stationarity
forces every preceding gate to open, while one bundled response advances at
most one hidden tail direction in the currently active packet.  Adaptive
Haar concentration is applied to virtual transcripts that never condition
future disclosure on a no-hit event.  A conditional product argument then
selects one deterministic instance, independently of the algorithm's seed,
on which at least \(Tq\) queries are necessary with constant probability.
Hence
\[
 \Mrand(\varepsilon)
 \ge \Omega\!\left(A_1\min\{r^{2/7},\sqrt\kappa\}\right),
\]
which joins the all-derivative branch at \(r=1\) and saturates at
\(r=\kappa^{7/4}\).

\section{Discussion}\label{sec:discussion}

The theorem shows that Hessian access helps only after its smoothness is
measured relative to the target accuracy and the saddle condition number.
The intermediate exponent \(12/7\) is not an isolated property of an outer
method; it is the balance between an \(O(A_1)\) envelope trajectory and a
distance-sensitive \(2/7\)-power inner contraction.  The matching lower
packet reproduces the same balance through the relation
\(q^{7/2}\asymp r\).

Several boundaries remain.  The oracle is exact and bundles value, gradient,
and the full joint Hessian at one unit cost; separate gradient/Hessian costs,
Hessian-vector products, stochastic or finite-sum oracles, and finite-
dimension refinements require different characterizations.  The result hides
logarithmic factors and records a separate initialization term \(I_0\).
Finally, the proof treats the three general results listed after
Theorem~\ref{thm:main} as external interfaces.  All problem-specific
reductions and randomized quantifier arguments are included below.

\appendix
\section{Complete Technical Proofs}\label{sec:complete-proofs}

This appendix gives the complete upper- and lower-bound arguments.  It
retains the local certificates, line-search implementation, smooth shells,
virtual filtrations, and fixed-instance coupling that were summarized in
Section~\ref{sec:overview}.

\section{Saddle Moreau envelope and exact stationarity transfer}\label{sec:envelope}
For a center \(x\), define
\[
 G_x(u,y)=f(u,y)+L\norm{u-x}^2.\label{eq:G}
\]
Because \(\nabla^2_{uu}f\succeq-LI\), \(G_x(\cdot,y)\) is \(L\)-strongly convex; it remains \(\mu\)-strongly concave in \(y\).  Let
\[
 u_x=\argmin_u\max_yG_x(u,y),
 \qquad
 v_x=\argmax_yG_x(u_x,y),
 \qquad
 z_x=(u_x,v_x),
 \]
and define
\[
 \calE(x)=\min_u\max_yG_x(u,y).\label{eq:envelope}
\]
Equivalently,
\[
 \calE(x)=\min_u\{\Phi(u)+L\norm{u-x}^2\}.
\]

\begin{lemma}[Exact transfer and gap]\label{lem:transfer}
For every \(x\),
\[
 \nabla\calE(x)=2L(x-u_x)=\nabla\Phi(u_x),\label{eq:transfer}
\]
and
\[
 \calE(x_0)-\inf\calE\le\Delta,
 \qquad
 \inf\calE=\inf\Phi.\label{eq:E-gap}
\]
\end{lemma}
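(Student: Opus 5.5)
The plan is to work throughout with the equivalent single-level form \(\calE(x)=\min_u\{\Phi(u)+L\norm{u-x}^2\}\) and to use the saddle structure only to identify \(u_x\). First, I will justify the equivalence. For fixed \(u\), the inner problem \(\max_y G_x(u,y)=\Phi(u)+L\norm{u-x}^2\) is immediate because the proximal term does not depend on \(y\). Hence \(\min_u\max_y G_x=\min_u\{\Phi(u)+L\norm{u-x}^2\}\), and \(u_x\) is exactly the minimizer of this proximal problem.

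Next, I need \(\Phi\) to be \(L\)-weakly convex, i.e. \(\Phi+\tfrac L2\norm{\cdot}^2\) is convex. This holds because \(f(\cdot,y)+\tfrac L2\norm{\cdot}^2\) is convex for every \(y\), and a pointwise maximum of convex functions is convex. Consequently, \(u\mapsto\Phi(u)+L\norm{u-x}^2\) is \(L\)-strongly convex, so \(u_x\) is unique. Moreover, \(\Phi\) is differentiable with \(\nabla\Phi(u)=\nabla_xf(u,y^\star(u))\) by Danskin's theorem, since the maximizer is unique under strong concavity. The first-order optimality condition at \(u_x\) then reads \(\nabla\Phi(u_x)+2L(u_x-x)=0\), which gives \(\nabla\Phi(u_x)=2L(x-u_x)\).

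For the gradient of \(\calE\), I will apply the standard Moreau-envelope argument. The map \((x,u)\mapsto\Phi(u)+L\norm{u-x}^2\) is jointly convex after adding \(\tfrac L2\norm{u}^2\) terms, but a cleaner route is to apply Danskin or envelope-theorem reasoning to the minimum over \(u\). I will use a sandwich argument. Evaluating at the suboptimal point \(u=u_x\) for the center \(x'\) gives
\[
\calE(x')\le \Phi(u_x)+L\norm{u_x-x'}^2=\calE(x)+2L\ip{x-u_x}{x'-x}+L\norm{x'-x}^2 .
\]
For the reverse inequality, I will use \(\calE(x)\le\Phi(u_{x'})+L\norm{u_{x'}-x}^2\) together with the continuity of \(x\mapsto u_x\). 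That continuity is Lipschitz, by strong convexity and the standard prox stability estimate. Combining the two inequalities shows that \(\calE\) is differentiable with \(\nabla\calE(x)=2L(x-u_x)\). This identification of the gradient, which requires showing differentiability rather than just a one-sided bound, is the only mildly delicate step. The upper quadratic bound plus a matching lower bound suffice, and the lower bound follows from \(\calE(x')\ge\calE(x)+2L\ip{x-u_{x'}}{x'-x}-\ldots\) with \(u_{x'}\to u_x\).

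Finally, the gap statements follow directly. Taking \(u=x\) gives \(\calE(x)\le\Phi(x)\), and since \(\calE\ge\Phi(u_x)\ge\inf\Phi\) we get \(\inf\calE\ge\inf\Phi\). Conversely, \(\inf_x\calE(x)\le\inf_x\Phi(x)\), so \(\inf\calE=\inf\Phi\). Therefore \(\calE(x_0)-\inf\calE\le\Phi(x_0)-\inf\Phi\le\Delta\).
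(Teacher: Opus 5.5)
Your proof is correct and follows essentially the same argument as the paper. In both, \(\nabla\Phi(u_x)=2L(x-u_x)\) comes from the first-order condition at \(u_x\) combined with Danskin's formula, and the gap statements use the same comparisons (\(u=x\) in the proximal problem, and \(\calE\ge\Phi(u_x)\ge\inf\Phi\)). The only real difference is that you work with the single-level proximal problem and actually prove the envelope formula \(\nabla\calE(x)=2L(x-u_x)\) by the two-sided sandwich with the Lipschitz prox map, whereas the paper reads it off the saddle stationarity system and simply invokes the envelope theorem.
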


\begin{proof}
The saddle first-order conditions are
\[
 \nabla_u f(u_x,v_x)+2L(u_x-x)=0,
 \qquad
 \nabla_y f(u_x,v_x)=0.
\]
Thus \(v_x=y^\star(u_x)\).  The envelope theorem gives \(\nabla\calE(x)=2L(x-u_x)\), which equals \(\nabla\Phi(u_x)\).  The gap identities follow by choosing \(u=x_0\) for the upper comparison and \(x=u\) when taking infima.
\end{proof}

Let \(H_{uu},H_{uy},H_{yy}\) denote the Hessian blocks of \(f\) at \((u_x,v_x)\).  For a direction \(h\), write
\[
 (p_h,q_h)=Dz_x[h].
\]
Differentiating the saddle equations gives
\begin{align}
 (H_{uu}+2LI)p_h+H_{uy}q_h&=2Lh,\label{eq:tangent-u}\\
 H_{yu}p_h+H_{yy}q_h&=0.\label{eq:tangent-y}
\end{align}

\begin{lemma}[Square-root saddle sensitivity]\label{lem:sens}
For every \(h\),
\[
 \norm{p_h}\le2\norm h,
 \qquad
 \norm{q_h}\le2\sqrt\kap\norm h,
 \qquad
 \norm{Dz_x[h]}\le2\sqrt{1+\kap}\norm h.\label{eq:sens}
\]
Consequently,
\[
 \norm{v_x-v_{x'}}\le2\sqrt\kap\norm{x-x'}.\label{eq:v-sens}
\]
\end{lemma}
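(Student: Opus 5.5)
We work directly with the tangent system \eqref{eq:tangent-u}--\eqref{eq:tangent-y}, writing $A=H_{uu}+2LI$, $B=H_{uy}$, $C=-H_{yy}$. From $\norm{\nabla^2 f}\le L$ and strong concavity we have $LI\preceq A\preceq 3LI$, $\mu I\preceq C\preceq LI$, and $\norm{B}\le L$. The plan is to get two energy identities by taking inner products, rather than to invert the system explicitly.

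\emph{Step 1: combine the two equations.} Take the inner product of \eqref{eq:tangent-u} with $p_h$ and of \eqref{eq:tangent-y} with $q_h$. Since $H_{yu}=B^\top$, the first gives $\ip{Ap_h}{p_h}+\ip{Bq_h}{p_h}=2L\ip{h}{p_h}$, and the second gives $\ip{q_h}{B^\top p_h}=\ip{Cq_h}{q_h}$. Subtracting cancels the cross term and leaves
\[
 \ip{Ap_h}{p_h}+\ip{Cq_h}{q_h}=2L\ip{h}{p_h}.
\]

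\emph{Step 2: bound $p_h$.} Use $\ip{Ap_h}{p_h}\ge L\norm{p_h}^2$ and $\ip{Cq_h}{q_h}\ge0$. This gives $L\norm{p_h}^2\le 2L\norm h\norm{p_h}$, hence $\norm{p_h}\le2\norm h$.

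\emph{Step 3: bound $q_h$.} From the identity of Step 1,
\[
 \mu\norm{q_h}^2\le \ip{Cq_h}{q_h}\le 2L\ip{h}{p_h}-L\norm{p_h}^2 .
\]
Maximizing $2L\ip{h}{p}-L\norm p^2$ over $p$ gives at most $L\norm h^2$. Hence $\norm{q_h}^2\le\kap\norm h^2$, i.e.\ $\norm{q_h}\le\sqrt\kap\norm h$, which is stronger than the claimed $2\sqrt\kap\norm h$.

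\emph{Step 4: assemble the bounds.} Combining the two component bounds,
\[
 \norm{Dz_x[h]}^2=\norm{p_h}^2+\norm{q_h}^2\le(4+4\kap)\norm h^2,
\]
which is the third inequality in \eqref{eq:sens}. For \eqref{eq:v-sens}, integrate $Dv_x$ along the segment from $x'$ to $x$ and apply the $q_h$ bound.

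\emph{Main obstacle: differentiability of $x\mapsto z_x$.} This is what legitimizes differentiating the saddle equations in the first place. The Jacobian of the saddle map in $(u,y)$ is $\begin{pmatrix}A&B\\B^\top&-C\end{pmatrix}$, with $A\succ0$ and $C\succ0$. Its Schur complement $-C-B^\top A^{-1}B$ is negative definite, so the Jacobian is invertible. Since $f\in C^2$, the implicit function theorem then gives a $C^1$ saddle map, and the mean-value integration in Step 4 is justified.
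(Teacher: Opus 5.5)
Your proof is correct and follows the paper's argument: it uses the same energy identity, obtained by pairing \eqref{eq:tangent-u} with $p_h$ and substituting \eqref{eq:tangent-y}, and then drops the $q_h$ term and uses $H_{uu}\succeq-LI$ to get $\norm{p_h}\le2\norm h$. There are two differences: you complete the square to reach the sharper $\norm{q_h}\le\sqrt\kap\norm h$, whereas the paper plugs $\norm{p_h}\le2\norm h$ into $\mu\norm{q_h}^2\le2L\norm h\norm{p_h}$ and settles for $2\sqrt\kap$; and you explicitly justify the differentiability of $x\mapsto z_x$ via the implicit function theorem and the negative-definite Schur complement, which the paper takes for granted.
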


\begin{proof}
Put \(A=-H_{yy}\succeq\mu I\).  Equation~\eqref{eq:tangent-y} gives \(H_{yu}p_h=Aq_h\).  Pairing \eqref{eq:tangent-u} with \(p_h\) yields
\[
 p_h^\top(H_{uu}+2LI)p_h+q_h^\top A q_h
 =2L\ip h{p_h}.
\]
Since \(H_{uu}\succeq-LI\),
\[
 L\norm{p_h}^2+\mu\norm{q_h}^2
 \le2L\norm h\norm{p_h}.
\]
The displayed bounds follow by first dropping the \(q_h\) term and then substituting the resulting estimate for \(p_h\).  Integrating along a line segment gives \eqref{eq:v-sens}.
\end{proof}

Eliminating \(q_h\) gives
\[
 p_h=2LS_x^{-1}h,
 \qquad
 S_x=H_{uu}+2LI-H_{uy}H_{yy}^{-1}H_{yu}\succeq LI,\label{eq:Schur}
\]
and hence
\[
 \nabla^2\calE(x)=2LI-4L^2S_x^{-1}.\label{eq:E-hess}
\]

\begin{lemma}[Envelope regularity]\label{lem:E-reg}
The envelope has \(2L\)-Lipschitz gradient and an \(M_{\calE}\)-Lipschitz Hessian with
\[
 M_{\calE}\le8\rho(1+\kap)^{3/2}.\label{eq:ME}
\]
\end{lemma}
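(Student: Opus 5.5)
The plan is to work directly from the Hessian formula \eqref{eq:E-hess}, $\nabla^2\calE(x)=2LI-4L^2S_x^{-1}$, using the sensitivity bounds of Lemma~\ref{lem:sens}.

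\textbf{Gradient Lipschitz constant.} Since $S_x\succeq LI$, we have $0\preceq 4L^2S_x^{-1}\preceq 4LI$, so
\[
 -2LI\preceq\nabla^2\calE(x)\preceq 2LI .
\]
This gives $\norm{\nabla^2\calE(x)}\le2L$, which is the $2L$-Lipschitz gradient claim. Alternatively, $\nabla\calE(x)=2L(x-u_x)$ from Lemma~\ref{lem:transfer} combined with $\norm{p_h}\le2\norm h$ gives a cruder $6L$, so I will use the Schur bound instead.

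\textbf{Hessian Lipschitz constant.} Differentiate $x\mapsto S_x^{-1}$ in a direction $h$:
\[
 D[\nabla^2\calE](x)[h]=4L^2S_x^{-1}\,(DS_x[h])\,S_x^{-1}.
\]
Because $\norm{S_x^{-1}}\le1/L$, this is bounded in norm by $4\norm{DS_x[h]}$, so it suffices to show $\norm{DS_x[h]}\le2\rho(1+\kap)^{3/2}\norm h$. The Hessian blocks of $f$ at $z_x$ change at rate at most $\rho\norm{Dz_x[h]}\le2\rho\sqrt{1+\kap}\,\norm h$. Differentiating $S_x=H_{uu}+2LI-H_{uy}H_{yy}^{-1}H_{yu}$ by the product rule produces three kinds of terms.
\begin{itemize}
\item The change of $H_{uu}$, with size at most $\rho\norm{Dz_x[h]}$.
\item Two cross terms involving the change of $H_{uy}$, each multiplied by $\norm{H_{uy}}\norm{H_{yy}^{-1}}\le L/\mu=\kap$.
\item One term $H_{uy}H_{yy}^{-1}(DH_{yy})H_{yy}^{-1}H_{yu}$, with size at most $\kap^2\rho\norm{Dz_x[h]}$.
\end{itemize}

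\textbf{Main obstacle.} Estimated this way, the bound is $\rho(1+2\kap+\kap^2)\cdot2\sqrt{1+\kap}$, which is of order $\kap^{5/2}$ rather than the claimed $\kap^{3/2}$. Naive operator-norm bounds therefore lose a full factor of $\kap$, and the fix is to avoid splitting the Schur complement.

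Write $S_x=J^\top\nabla^2f\,J+2LI$ up to signs, where $J=\begin{pmatrix}I\\ -H_{yy}^{-1}H_{yu}\end{pmatrix}$. Then the quadratic form $w^\top S_xw$ equals $w^\top(H_{uu}+2LI)w$ plus the contribution of $f$'s Hessian evaluated along the lifted vector $(w,q)$ with $q=-H_{yy}^{-1}H_{yu}w$. Applying the envelope (Danskin) principle to $\max_q$ of this quadratic, the first-order variation of $S_x$ is exactly the variation of $\nabla^2f$ evaluated on the fixed optimal $(w,q)$. This gives
\[
 |w^\top DS_x[h]\,w|\le\rho\norm{Dz_x[h]}\bigl(\norm w^2+\norm q^2\bigr).
\]
The same energy argument as in Lemma~\ref{lem:sens}, with $\mu\norm q^2\le q^\top Aq$, bounds $\norm q$ in terms of $w$. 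The exponent is tight only if one tests against $S_x^{-1}$-weighted vectors: set $w=S_x^{-1}v$ and use $\norm{p_h}$-type bounds, so that the weight $\norm w^2+\norm q^2\lesssim(1+\kap)\norm w^2$ appears in place of $\kap^2$.

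\textbf{Assembling the constant.} The estimate then reads
\[
 4\cdot\rho\cdot2\sqrt{1+\kap}\cdot(1+\kap)=8\rho(1+\kap)^{3/2}.
\]
Getting this $(1+\kap)$ weight rather than $\kap^2$, through the variational characterization of the Schur complement, is the step I expect to be delicate. Integrating the directional-derivative bound along segments gives the Lipschitz statement.
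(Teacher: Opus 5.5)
Your gradient bound is the paper's. Your Hessian argument also has the right architecture: Danskin applied to the variational form of $w^\top S_xw$, tested at $w=S_x^{-1}v$, gives $v^\top D[\nabla^2\calE](x)[h]\,v=D^3f(z_x)[Dz_x[h],Dz_x[v],Dz_x[v]]$. That is the paper's reduced third-derivative identity, which it obtains directly by implicit differentiation and bounds in one line with Lemma~\ref{lem:sens}.

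The step you flag as delicate, however, is stated incorrectly and fails as written. The weight inequality $\norm w^2+\norm q^2\lesssim(1+\kap)\norm w^2$ is false. Take $d_x=d_y=1$ and $f(u,y)=buy-\mu y^2/2$ with $b=\sqrt{L(L-\mu)}$; its Hessian has eigenvalues $L-\mu$ and $-L$, so it is admissible. Then $q=-H_{yy}^{-1}H_{yu}w=(b/\mu)w$, and $\norm w^2+\norm q^2=(\kap^2-\kap+1)\norm w^2$ for every $w$, including $w=S_x^{-1}v$. Similarly, the energy argument from $\mu\norm q^2\le q^\top Aq$ only gives $\norm q\le\kap\norm w$, and this example shows that is essentially sharp. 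So no bound measured against $\norm w$ removes the extra factor of $\kap$. In particular, your preliminary reduction to an operator-norm bound on $DS_x[h]$ must be dropped, not repaired: the Danskin formula alone still carries the weight $1+\kap^2$.

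The missing identification is that the compensation comes from normalizing by $\norm v=\norm{S_xw}$, not by $\norm w$. Your hint about $p_h$-type bounds points this way, but the inequality you drew from it is wrong. Since $p_v=2LS_x^{-1}v=2Lw$ and, by \eqref{eq:tangent-y}, $q_v=-H_{yy}^{-1}H_{yu}p_v=2Lq$, the pair $(2Lw,2Lq)$ is exactly $Dz_x[v]$. Lemma~\ref{lem:sens} then gives $4L^2(\norm w^2+\norm q^2)=\norm{Dz_x[v]}^2\le4(1+\kap)\norm v^2$, and hence
\[
 \bigl|v^\top D[\nabla^2\calE](x)[h]\,v\bigr|
 =4L^2\bigl|w^\top DS_x[h]\,w\bigr|
 =\bigl|D^3f(z_x)[Dz_x[h],Dz_x[v],Dz_x[v]]\bigr|
 \le8\rho(1+\kap)^{3/2}\norm h\norm v^2 .
\]
Symmetry of $D[\nabla^2\calE](x)[h]$ turns this into the operator-norm bound.

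Your closing arithmetic $4\cdot\rho\cdot2\sqrt{1+\kap}\cdot(1+\kap)$ lands on the right constant. But the factor $4(1+\kap)$ must come from $\norm{Dz_x[v]}^2$, not from $\norm{S_x^{-1}}\le1/L$ times a weight relative to $\norm w^2$. Because Danskin differentiates $\nabla^2f(z_x)$, you also need $f\in C^3$, removed afterwards by mollification as in the paper. With these corrections your route is a longer derivation of the same identity the paper uses.
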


\begin{proof}
Since \(S_x\succeq LI\), \(0\preceq Du_x=2LS_x^{-1}\preceq2I\).  Formula~\eqref{eq:E-hess} gives \(\norm{\nabla^2\calE}\le2L\).  If \(f\) is \(C^3\), implicit differentiation gives the reduced third-derivative identity
\[
 D^3\calE(x)[h_1,h_2,h_3]
 =D^3f(z_x)[Dz_xh_1,Dz_xh_2,Dz_xh_3].\label{eq:third-id}
\]
Lemma~\ref{lem:sens} and \(\norm{D^3f}\le\rho\) therefore give \eqref{eq:ME}.  The \(C^{2,1}\) case follows by mollification or almost-everywhere differentiation and line integration.
\end{proof}

\section{Reduced-dual elimination and certified second-order solves}\label{sec:dual-reduction}
The key improvement over direct joint root tracking is that the proximal primal variable is uniformly well conditioned.  Fix both \(x\) and \(y\) and define
\[
 u_x(y)=\argmin_uG_x(u,y),
 \qquad
 d_x(y)=\min_uG_x(u,y)=G_x(u_x(y),y),
 \qquad
 \psi_x(y)=-d_x(y).\label{eq:dual-def}
\]
Then \(v_x=\argmin_y\psi_x(y)\) and \(u_x=u_x(v_x)\).

\begin{lemma}[Constant-conditioned primal elimination]\label{lem:primal-elim}
For every \((x,y)\), put
\[
 A_x(y)=\nabla^2_{uu}f(u_x(y),y)+2LI.
\]
Then
\[
 LI\preceq A_x(y)\preceq3LI,\label{eq:A-bounds}
\]
so \(u\mapsto G_x(u,y)\) has condition number at most \(3\).  Moreover,
\[
 D_yu_x(y)=-A_x(y)^{-1}\nabla^2_{uy}f(u_x(y),y),
 \qquad
 \norm{D_yu_x(y)}\le1.\label{eq:u-y-lip}
\]
Thus \(y\mapsto(u_x(y),y)\) is \(\sqrt2\)-Lipschitz.
\end{lemma}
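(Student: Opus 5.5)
The bounds \eqref{eq:A-bounds} follow directly from the gradient-Lipschitz promise, which gives the Hessian bounds $-LI\preceq\nabla^2_{uu}f\preceq LI$ pointwise. Adding $2LI$ yields $LI\preceq A_x(y)\preceq 3LI$. Since $A_x(y)$ is exactly the Hessian of $u\mapsto G_x(u,y)$ at $u_x(y)$, and the same two-sided bound holds at every $u$, the function $G_x(\cdot,y)$ is $L$-strongly convex and $3L$-smooth. Its condition number is therefore at most $3$. This also guarantees that $u_x(y)$ is well defined and unique.

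For the derivative formula I will apply the implicit function theorem to the first-order condition
\[
\nabla_u f(u_x(y),y)+2L(u_x(y)-x)=0 .
\]
The Jacobian of this condition in $u$ is $A_x(y)$, which is invertible with $\norm{A_x(y)^{-1}}\le 1/L$. Because $f\in C^2$, the map $u_x(\cdot)$ is $C^1$. Differentiating the condition in $y$ gives $A_x(y)\,D_yu_x(y)+\nabla^2_{uy}f(u_x(y),y)=0$, which is \eqref{eq:u-y-lip}.

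The only nonroutine point is the operator-norm bound $\norm{D_yu_x(y)}\le1$. I will use $\norm{\nabla^2_{uy}f}\le\norm{\nabla^2 f}\le L$, since an off-diagonal block of a symmetric matrix has operator norm at most that of the whole matrix. Combined with $\norm{A_x(y)^{-1}}\le1/L$, this gives $\norm{D_yu_x(y)}\le L\cdot(1/L)=1$.

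Finally, the map $y\mapsto(u_x(y),y)$ has Jacobian $\bigl(D_yu_x(y);I\bigr)$. For any unit vector $w$, its image has squared norm $\norm{D_yu_x(y)w}^2+\norm w^2\le 2$. Integrating along the segment between two points $y,y'$ shows the map is $\sqrt2$-Lipschitz.
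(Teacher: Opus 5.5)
Your proposal is correct and follows essentially the same route as the paper. You bound the diagonal and off-diagonal Hessian blocks by \(L\), differentiate the first-order condition \(\nabla_u f(u_x(y),y)+2L(u_x(y)-x)=0\) in \(y\), and combine \(\norm{A_x(y)^{-1}}\le1/L\) with \(\norm{\nabla^2_{uy}f}\le L\). You also spell out the implicit-function-theorem justification and the graph-map Jacobian computation for the \(\sqrt2\)-Lipschitz claim, both of which the paper leaves implicit.
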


\begin{proof}
The first assertion follows from \(-LI\preceq\nabla^2_{uu}f\preceq LI\).  Differentiate the equation
\[
 \nabla_u f(u_x(y),y)+2L(u_x(y)-x)=0
\]
with respect to \(y\) and use \(\norm{A_x(y)^{-1}}\le1/L\) and \(\norm{\nabla^2_{uy}f}\le L\).
\end{proof}

\begin{lemma}[Reduced-dual curvature]\label{lem:dual-curvature}
The reduced dual satisfies
\[
 \nabla\psi_x(y)=-\nabla_yf(u_x(y),y),\label{eq:dual-grad}
\]
\[
 \nabla^2\psi_x(y)
 =-H_{yy}+H_{yu}A_x(y)^{-1}H_{uy},\label{eq:dual-hess}
\]
where all blocks are evaluated at \((u_x(y),y)\).  Consequently,
\[
 \mu I\preceq\nabla^2\psi_x(y)\preceq2LI.\label{eq:dual-sc-smooth}
\]
There is a universal constant \(C_d\) such that
\[
 \norm{\nabla^2\psi_x(y)-\nabla^2\psi_x(y')}
 \le C_d\rho\norm{y-y'}.\label{eq:dual-Hlip}
\]
One may take \(C_d=4\sqrt2\).
\end{lemma}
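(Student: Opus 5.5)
We differentiate the elimination identity directly and then bound the Hessian difference by splitting it into pieces. The gradient formula \eqref{eq:dual-grad} is the envelope (Danskin) theorem applied to \(d_x(y)=G_x(u_x(y),y)\). Since \(\nabla_uG_x(u_x(y),y)=0\),
\[
\nabla d_x(y)=\nabla_yG_x(u_x(y),y)=\nabla_yf(u_x(y),y),
\]
because the proximal term \(L\norm{u-x}^2\) does not depend on \(y\). Differentiating once more in \(y\) with the chain rule gives
\[
\nabla^2 d_x(y)=H_{yy}+H_{yu}D_yu_x(y).
\]
Substituting \(D_yu_x(y)=-A_x(y)^{-1}H_{uy}\) from Lemma~\ref{lem:primal-elim} and negating yields \eqref{eq:dual-hess}.

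For the sandwich \eqref{eq:dual-sc-smooth}, the lower bound follows because \(H_{yu}A_x^{-1}H_{uy}\succeq0\) (as \(A_x\succ0\)) and \(-H_{yy}\succeq\mu I\) by strong concavity. For the upper bound, \(-H_{yy}\preceq LI\), and
\[
\norm{H_{yu}A_x^{-1}H_{uy}}\le \norm{H_{uy}}^2/L\le L,
\]
using \(A_x\succeq LI\) and \(\norm{H_{uy}}\le L\). Together these give \(\nabla^2\psi_x\preceq2LI\).

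For the Hessian-Lipschitz bound \eqref{eq:dual-Hlip}, set \(w=(u_x(y),y)\) and \(w'=(u_x(y'),y')\). Lemma~\ref{lem:primal-elim} gives \(\norm{w-w'}\le\sqrt2\norm{y-y'}\), so every Hessian block of \(f\) changes by at most \(\rho\sqrt2\norm{y-y'}\) between \(w\) and \(w'\). We then decompose the difference of \eqref{eq:dual-hess} telescopically:
\[
\Delta(-H_{yy})+(\Delta H_{yu})A^{-1}H_{uy}+H'_{yu}\,\Delta(A^{-1})\,H_{uy}+H'_{yu}A'^{-1}\Delta H_{uy}.
\]
The bounds on the pieces are as follows.
\begin{itemize}
\item The first term is at most \(\sqrt2\rho\norm{y-y'}\).
\item The second and fourth terms are each at most \(\sqrt2\rho\norm{y-y'}\cdot L\cdot(1/L)\).
\item For the third, use the resolvent identity \(A^{-1}-A'^{-1}=A^{-1}(A'-A)A'^{-1}\) with \(\norm{A'-A}=\norm{\Delta H_{uu}}\le\sqrt2\rho\norm{y-y'}\). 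Then \(\norm{\Delta(A^{-1})}\le\sqrt2\rho\norm{y-y'}/L^2\), and multiplying by the two factors of norm \(\le L\) gives at most \(\sqrt2\rho\norm{y-y'}\).
\end{itemize}
Summing the four contributions gives \(4\sqrt2\rho\norm{y-y'}\), i.e.\ \(C_d=4\sqrt2\).

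The main obstacle is making sure no factor of \(\kappa\) appears, since the paper's \(O(\rho)\) claim depends on it. This hinges on the primal elimination being uniformly well conditioned. Every use of \(A_x^{-1}\) is bounded by \(1/L\) and always sits next to off-diagonal blocks bounded by \(L\), so the products stay of order one. In contrast, the inverse \(H_{yy}^{-1}\), which would cost \(1/\mu\), never appears in the reduced-dual Hessian.

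A minor technical point is that \(f\) is only \(C^2\), so \(u_x(\cdot)\) is \(C^1\) by the implicit function theorem. The formula for \(\nabla^2\psi_x\) then needs only second derivatives of \(f\), and the Lipschitz estimate uses only Lipschitz continuity of \(\nabla^2 f\), so no third derivatives are required.
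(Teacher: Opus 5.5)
Your proof is correct and follows the paper's argument essentially step for step. You use the envelope theorem and $D_yu_x=-A_x^{-1}H_{uy}$ for the gradient and Hessian formulas, and the Schur-term bounds with $A_x\succeq LI$ for the sandwich. For $C_d=4\sqrt2$, you combine the $\sqrt2$-Lipschitz lift $y\mapsto(u_x(y),y)$ with a three-term telescoping of the Schur product via the resolvent identity, plus the $yy$ block.
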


\begin{proof}
The envelope theorem gives \eqref{eq:dual-grad}; differentiating it and using \eqref{eq:u-y-lip} gives \eqref{eq:dual-hess}.  Since \(-H_{yy}\succeq\mu I\) and the Schur term is positive semidefinite, the lower bound in \eqref{eq:dual-sc-smooth} follows.  For the upper bound,
\[
 \norm{-H_{yy}}\le L,
 \qquad
 \norm{H_{yu}A_x(y)^{-1}H_{uy}}\le L.
\]
For \eqref{eq:dual-Hlip}, let \(z=(u_x(y),y)\) and \(z'=(u_x(y'),y')\).  Lemma~\ref{lem:primal-elim} gives \(\norm{z-z'}\le\sqrt2\norm{y-y'}\).  Every Hessian block changes by at most \(\rho\norm{z-z'}\).  The resolvent identity and \(A_x(y),A_x(y')\succeq LI\) give
\[
 \norm{A_x(y)^{-1}-A_x(y')^{-1}}
 \le L^{-2}\rho\norm{z-z'}.
\]
Expanding the difference of the three-factor Schur terms yields three contributions, each at most \(\rho\norm{z-z'}\).  Together with the \(yy\) block this gives \(4\rho\norm{z-z'}\le4\sqrt2\rho\norm{y-y'}\).
\end{proof}

\begin{lemma}[Simulation of a reduced-dual second-order oracle]\label{lem:dual-oracle}
A value/gradient/Hessian query to \(\psi_x\) at \(y\) can be simulated to any prescribed polynomial accuracy using \(\tO(1)\) calls to \(\cO_2\).  More precisely, if \(\widetilde u\) satisfies \(\norm{\widetilde u-u_x(y)}\le e_u\), then the natural plug-in estimates obey
\[
 |\widetilde\psi_x(y)-\psi_x(y)|\le\frac{3L}{2}e_u^2,\label{eq:dual-value-error}
\]
\[
 \norm{\widetilde g_x(y)-\nabla\psi_x(y)}\le Le_u,
 \qquad
 \norm{\widetilde H_x(y)-\nabla^2\psi_x(y)}\le C\rho e_u\label{eq:dual-oracle-error}
\]
for a universal \(C\).  The logarithmic cost of producing \(e_u\) is hidden by \(\tO(1)\).
\end{lemma}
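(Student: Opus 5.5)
The proof has two parts: computing the inner point \(\widetilde u\), and bounding the plug-in errors. Fix \((x,y)\). By Lemma~\ref{lem:primal-elim}, \(u\mapsto G_x(u,y)\) is \(L\)-strongly convex with \(3L\)-Lipschitz gradient, so its condition number is at most \(3\). Each \(\cO_2\) call at \((u,y)\) returns \(\nabla_u f(u,y)\) and hence \(\nabla_uG_x(u,y)=\nabla_uf(u,y)+2L(u-x)\). Gradient descent with step \(1/(3L)\) then contracts the distance to \(u_x(y)\) by a factor \(1-1/3\) per call.

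For the starting distance, take any \(u^{(0)}\) (say \(x\), or the previous inner iterate). Strong convexity gives \(\norm{u^{(0)}-u_x(y)}\le\norm{\nabla_uG_x(u^{(0)},y)}/L\), and this is computable from the first query. Hence \(O(\log(\norm{\nabla_uG_x(u^{(0)},y)}/(Le_u)))\) calls certify \(\norm{\widetilde u-u_x(y)}\le e_u\). For any polynomial target accuracy this is \(\tO(1)\). Newton steps, which are free given the Hessian, would also work, but the linear rate already suffices.

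The estimates are evaluated from one extra query at \((\widetilde u,y)\), mimicking the exact formulas:
\[
\widetilde\psi_x(y)=-G_x(\widetilde u,y),\qquad
\widetilde g_x(y)=-\nabla_yf(\widetilde u,y),
\]
\[
\widetilde H_x(y)=-\widetilde H_{yy}+\widetilde H_{yu}\widetilde A^{-1}\widetilde H_{uy},
\qquad \widetilde A=\widetilde H_{uu}+2LI.
\]
The three error bounds then follow in turn.
\begin{itemize}
\item \emph{Value.} Since \(u_x(y)\) minimizes \(G_x(\cdot,y)\) and the \(u\)-gradient is \(3L\)-Lipschitz, the quadratic upper bound gives \(0\le G_x(\widetilde u,y)-G_x(u_x(y),y)\le\tfrac{3L}{2}e_u^2\). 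This is \eqref{eq:dual-value-error}.
\item \emph{Gradient.} By \eqref{eq:dual-grad}, the error is \(\norm{\nabla_yf(\widetilde u,y)-\nabla_yf(u_x(y),y)}\le Le_u\), using \(L\)-Lipschitzness of \(\nabla f\).
\item \emph{Hessian.} Repeat the perturbation argument from the proof of \eqref{eq:dual-Hlip}, with \(z=(u_x(y),y)\) and \(\widetilde z=(\widetilde u,y)\), so \(\norm{z-\widetilde z}=e_u\). Each block moves by at most \(\rho e_u\), and \(\widetilde A\succeq LI\) by the same bound as \eqref{eq:A-bounds}. The resolvent identity gives \(\norm{\widetilde A^{-1}-A^{-1}}\le\rho e_u/L^2\). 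Expanding the three-factor product with \(\norm{H_{uy}}\le L\) and \(\norm{A^{-1}}\le 1/L\) gives three terms of size at most \(\rho e_u\), plus \(\rho e_u\) from the \(yy\) block. Hence the error is at most \(4\rho e_u\), so \(C=4\) works.
\end{itemize}

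The main subtlety is that the required \(e_u\) must be certified without knowing \(u_x(y)\). This is settled by the computable a posteriori bound \(\norm{\nabla_uG_x(\widetilde u,y)}/L\), which uses strong convexity. The iteration count is therefore logarithmic in the ratio of the initial gradient norm to the target accuracy. This ratio is bounded polynomially by problem quantities whenever the inner initialization is within polynomial distance, which is what the \(\tO(1)\) hides.
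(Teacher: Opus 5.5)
Your proposal is correct and follows essentially the same route as the paper. It certifies the primal solve through the residual bound \(\norm{\nabla_uG_x(\widetilde u,y)}/L\), then bounds the value error by \(3L\)-smoothness, the gradient error by \(L\)-Lipschitzness of \(\nabla f\), and the Hessian error by the Schur/resolvent perturbation. Your use of plain rather than accelerated gradient descent is immaterial since the primal condition number is at most \(3\), and your explicit constant \(C=4\) matches the paper's four-term count in Lemma~\ref{lem:dual-curvature} with no \(\sqrt2\) factor, because \(y\) is fixed here.
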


\begin{proof}
The primal subproblem is \(L\)-strongly convex and \(3L\)-smooth.  Its observable residual \(r_u=\nabla_uG_x(\widetilde u,y)\) certifies \(\norm{\widetilde u-u_x(y)}\le\norm{r_u}/L\), so accelerated gradient descent reaches a certified distance \(e_u\) in a logarithmic number of original oracle calls.  Warm starts are available because \(u_x(y)\) is \(1\)-Lipschitz in \(y\) and \(2\)-Lipschitz in \(x\).  The value estimate follows from \(3L\)-smoothness and stationarity at \(u_x(y)\).  The gradient estimate follows from \(L\)-smoothness of \(f\).  For the Hessian, use the Schur formula \eqref{eq:dual-hess}, Hessian Lipschitzness, the inverse bound \(\norm{A^{-1}}\le1/L\), and the resolvent identity exactly as in the proof of Lemma~\ref{lem:dual-curvature}.  Choosing \(e_u\) below any inverse-polynomial oracle tolerance required
by the certified line search costs only an additional logarithm.
\end{proof}

\begin{lemma}[Certified approximate gradients are enlarged subgradients]
\label{lem:gradient-enlargement}
Let \(g\) be differentiable and \(\mu\)-strongly convex.  If
\(\norm{v-\nabla g(y)}\le\delta\), then
\[
 v\in\partial_{\epsilon_h}g(y),
 \qquad
 \epsilon_h=\frac{\delta^2}{2\mu}.\label{eq:gradient-enlargement}
\]
\end{lemma}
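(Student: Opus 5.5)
The plan is to verify the defining inequality of the \(\epsilon_h\)-enlarged subdifferential directly: for every \(y'\) we need
\[
 g(y')\ge g(y)+\ip{v}{y'-y}-\epsilon_h .
\]
The only ingredients are strong convexity of \(g\) at \(y\) and a one-dimensional quadratic minimization.

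First, by \(\mu\)-strong convexity and differentiability,
\[
 g(y')\ge g(y)+\ip{\nabla g(y)}{y'-y}+\frac{\mu}{2}\norm{y'-y}^2 .
\]
Writing \(\nabla g(y)=v+(\nabla g(y)-v)\) and applying Cauchy--Schwarz with \(\norm{v-\nabla g(y)}\le\delta\) gives
\[
 g(y')\ge g(y)+\ip{v}{y'-y}-\delta\norm{y'-y}+\frac{\mu}{2}\norm{y'-y}^2 .
\]

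Second, set \(s=\norm{y'-y}\ge0\) and bound the scalar quadratic \(\tfrac{\mu}{2}s^2-\delta s\) from below. Its minimum over \(s\) is attained at \(s=\delta/\mu\) and equals \(-\delta^2/(2\mu)\). Hence
\[
 g(y')\ge g(y)+\ip{v}{y'-y}-\frac{\delta^2}{2\mu}
\]
for all \(y'\), which is exactly \(v\in\partial_{\epsilon_h}g(y)\) with \(\epsilon_h=\delta^2/(2\mu)\).

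No step is a genuine obstacle. The only point needing care is to use the convention matching the later A-HPE interface: \(\partial_\epsilon g(y)=\{v:\ g(y')\ge g(y)+\ip{v}{y'-y}-\epsilon\ \forall y'\}\), as in Monteiro--Svaiter. With that definition the argument is complete.
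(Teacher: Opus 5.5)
Your proof is correct and follows essentially the same route as the paper: the strong-convexity lower bound at \(y\), followed by minimizing a quadratic in the displacement. The only cosmetic difference is that you apply Cauchy--Schwarz first and minimize the scalar quadratic \(\tfrac{\mu}{2}s^2-\delta s\), whereas the paper minimizes \(\tfrac{\mu}{2}\norm{w-y}^2-\ip{e}{w-y}\) directly over the vector \(w-y\), obtaining \(-\norm{e}^2/(2\mu)\ge-\delta^2/(2\mu)\).
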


\begin{proof}
For every \(w\), strong convexity gives
\[
 g(w)\ge g(y)+\ip{\nabla g(y)}{w-y}
       +\frac\mu2\norm{w-y}^2.
\]
Writing \(e=v-\nabla g(y)\) and minimizing
\(\mu\norm{w-y}^2/2-\ip e{w-y}\) over \(w-y\) yields
\[
 g(w)\ge g(y)+\ip v{w-y}-\frac{\norm{e}^2}{2\mu},
\]
which is exactly \eqref{eq:gradient-enlargement}.
\end{proof}

\begin{theorem}[Large-step A-HPE black box]
\label{thm:large-step-ahpe-blackbox}
Fix \(0<\sigma<1\) and \(\theta>0\).  Let \(g\) be closed convex, attain its
minimum, and have an \(M\)-Lipschitz Hessian.  The A-HPE recurrence starts
from \(A_0=0\) and \(x_0=y_0\).  Given \(A_k,x_k,y_k\), it chooses
\(\lambda_{k+1}>0\) and sets
\[
 a_{k+1}=\frac{\lambda_{k+1}+
 \sqrt{\lambda_{k+1}^2+4\lambda_{k+1}A_k}}2,
 \qquad A_{k+1}=A_k+a_{k+1},\label{eq:ahpe-a}
\]
\[
 z_k=\frac{A_k}{A_{k+1}}y_k+
       \frac{a_{k+1}}{A_{k+1}}x_k.\label{eq:ahpe-center}
\]
It then produces \(\widetilde y_{k+1},v_{k+1}\) and
\(\epsilon_{k+1}\ge0\) such that
\[
 v_{k+1}\in\partial_{\epsilon_{k+1}}g(\widetilde y_{k+1}),
\]
\[
 \norm{\lambda_{k+1}v_{k+1}+\widetilde y_{k+1}-z_k}^2
 +2\lambda_{k+1}\epsilon_{k+1}
 \le\sigma^2\norm{\widetilde y_{k+1}-z_k}^2,\label{eq:ahpe-relative}
\]
updates
\[
 x_{k+1}=x_k-a_{k+1}v_{k+1},
 \qquad y_{k+1}=\widetilde y_{k+1},\label{eq:ahpe-update}
\]
and satisfies the large-step condition
\[
 \lambda_{k+1}\norm{\widetilde y_{k+1}-z_k}
 \ge\frac\theta M.\label{eq:ahpe-large-step}
\]
If the initial distance to the solution set is at most \(D\), then
\[
 g(y_k)-g^*\le
 C_{\sigma,\theta}\frac{MD^3}{k^{7/2}}.\label{eq:ahpe-rate}
\]
This is the large-step A-HPE convergence theorem of
Monteiro--Svaiter~\cite{monteiro2013ahpe}.  The center
\eqref{eq:ahpe-center} depends on the same stepsize as the large-step
condition; the implementation below respects this coupling.
\end{theorem}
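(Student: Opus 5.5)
This statement is the Monteiro--Svaiter large-step A-HPE convergence theorem, which the paper imports as a black box. Still, I would reconstruct its proof in the standard two-stage way. First, the estimate-sequence stage. I would show by induction that for every minimizer \(y^*\)
\[
 A_k\bigl(g(y_k)-g^*\bigr)+\tfrac12\norm{x_k-y^*}^2
 +\tfrac{1-\sigma^2}{2}\sum_{j=1}^{k}\frac{A_j}{\lambda_j}\norm{\widetilde y_j-z_{j-1}}^2
 \le\tfrac12\norm{x_0-y^*}^2 .
\]
The ingredients are the \(\epsilon\)-subgradient inequality \(g(w)\ge g(\widetilde y_{k+1})+\ip{v_{k+1}}{w-\widetilde y_{k+1}}-\epsilon_{k+1}\), evaluated at \(w=y^*\) and at \(w=y_k\) and combined with weights \(a_{k+1}\) and \(A_k\). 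I would also use the identity \(a_{k+1}^2=\lambda_{k+1}A_{k+1}\), which is exactly how \eqref{eq:ahpe-a} is defined, and the convex-combination form \eqref{eq:ahpe-center} of \(z_k\). Expanding \(\norm{x_{k+1}-y^*}^2\) with \eqref{eq:ahpe-update}, the cross terms collapse to \(\lambda_{k+1}\)-scaled expressions in \(\widetilde y_{k+1}-z_k\). The relative-error condition \eqref{eq:ahpe-relative} then absorbs the error terms, leaving the \((1-\sigma^2)\) sum. Taking \(D\ge\norm{x_0-y^*}\) gives \(g(y_k)-g^*\le D^2/(2A_k)\) together with \(\sum_j (A_j/\lambda_j)\norm{\widetilde y_j-z_{j-1}}^2\le D^2/(1-\sigma^2)\).

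Second, the growth of \(A_k\), which I expect to be the main obstacle. The large-step condition \eqref{eq:ahpe-large-step} gives \(\norm{\widetilde y_j-z_{j-1}}\ge\theta/(M\lambda_j)\). Substituting this into the sum yields
\[
 \sum_{j\le k}\frac{A_j}{\lambda_j^{3}}\le\frac{M^2D^2}{\theta^2(1-\sigma^2)}.
\]
Separately, \(\sqrt{A_{j}}-\sqrt{A_{j-1}}\ge\tfrac12\sqrt{\lambda_j}\) follows from \(a_j^2=\lambda_jA_j\). The task is to combine these two facts into a lower bound on \(A_k\). I would apply a reverse Hölder inequality over the window \(j\in(k/2,k]\): the constraint caps \(\sum_j\lambda_j^{-3}A_j\), while \(\sqrt{A_k}\gtrsim\sum_j\sqrt{\lambda_j}\). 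This yields a recursive inequality of the form \(A_k\ge c\,\theta^{?}k^{7/2}/(MD)\), with \(\theta\) and \(\sigma\) dependence absorbed into the constant. A cleaner route is the standard lemma of Monteiro--Svaiter: if \(\sum_j A_j\lambda_j^{-3}\le C\), then \(A_k\ge c\,k^{7/2}C^{-1/2}\). I would prove it by induction using the Hölder bound \(\sum\sqrt{\lambda_j}\ge k^{7/6}(\sum A_j^{-1/3}\cdots)\) on the dyadic window; the delicate part is tracking exponents so that \(k^{7/2}\) emerges.

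Finally, plugging \(A_k\gtrsim k^{7/2}\theta/(MD)\) into \(g(y_k)-g^*\le D^2/(2A_k)\) gives \eqref{eq:ahpe-rate} with \(C_{\sigma,\theta}\) depending only on \(\sigma,\theta\). The Hessian-Lipschitz assumption enters only through the scaling convention \(\theta/M\) of the large-step condition; the constructive existence of iterates satisfying both conditions is the line-search's job, not part of this statement.
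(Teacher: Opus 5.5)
Your reconstruction is correct and follows the Monteiro--Svaiter argument that the paper cites without reproducing: the estimate-sequence inequality with the $(1-\sigma^2)$-weighted sum, the large-step substitution giving $\sum_{j\le k}A_j\lambda_j^{-3}\le C:=M^2D^2/(\theta^2(1-\sigma^2))$, and a growth lemma giving $A_k\gtrsim k^{7/2}/\sqrt C$, hence the rate with $C_{\sigma,\theta}\asymp 1/(\theta\sqrt{1-\sigma^2})$. The growth step you leave sketchy closes cleanly: with $\lambda_j=a_j^2/A_j$ the constraint reads $\sum_j(A_j^{2/3}/a_j)^6\le C$, so power means and Cauchy--Schwarz give $\sum_{j\le k}a_jA_j^{-2/3}\ge k^{7/6}C^{-1/6}$, while $a_jA_j^{-2/3}\le\int_{A_{j-1}}^{A_j}t^{-2/3}\,dt$ bounds the same sum by $3A_k^{1/3}$, so $A_k\ge k^{7/2}/(27\sqrt C)$ (your dyadic recursion $A_k\gtrsim k^{7/3}A_{\lfloor k/2\rfloor}^{1/3}C^{-1/3}$ also closes at exponent $7/2$).
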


For a center \(z\), a stepsize \(\lambda>0\), and a regularization constant
\(\overline M\ge M\), define the second-order Taylor--proximal model
\[
 \mathfrak m_{z,\lambda}(d)
 :=\ip{\nabla g(z)}d+
 \frac12\ip{\nabla^2g(z)d}d+
 \frac{\overline M}{6}\norm d^3+
 \frac1{2\lambda}\norm d^2.\label{eq:true-taylor-prox}
\]
A vector \(d\) is a \(\widehat\sigma\)-approximate Taylor--proximal step if,
with
\[
 u_z(d):=\nabla g(z)+\nabla^2g(z)d+
             \frac{\overline M}{2}\norm d\,d,
\]
one has
\[
 \norm{\lambda u_z(d)+d}\le
 \widehat\sigma\norm d.\label{eq:ats-condition}
\]

\begin{lemma}[Certified implementation of the Taylor subroutine]
\label{lem:certified-ats}
Let \(g\) be \(\mu\)-strongly convex, \(L_g\)-smooth, and have an
\(M\)-Lipschitz Hessian.  Suppose certified approximations
\((\widetilde g_z,\widetilde H_z)\) satisfy
\[
 \norm{\widetilde g_z-\nabla g(z)}\le\delta_g,
 \qquad
 \norm{\widetilde H_z-\nabla^2g(z)}\le\delta_H,
 \qquad \delta_H\le\mu/4.\label{eq:ats-errors}
\]
Let \(d\) be the exact minimizer, using free computation, of
\[
 \ip{\widetilde g_z}d+
 \frac12\ip{\widetilde H_zd}d+
 \frac{\overline M}{6}\norm d^3+
 \frac1{2\lambda}\norm d^2.\label{eq:approx-taylor-prox}
\]
If
\[
 \lambda(\delta_g+\delta_H\norm d)
 \le\widehat\sigma\norm d,\label{eq:ats-certificate}
\]
then \(d\) satisfies \eqref{eq:ats-condition} for the true model.
Fix a prescribed stopping tolerance \(\rho_{\rm stop}>0\).  Before
solving the model, refine the gradient certificate at \(z\) until either
\[
 \norm{\widetilde g_z}+\delta_g\le\rho_{\rm stop},\label{eq:ats-center-stop}
\]
in which case \(z\) is returned as a certified small-gradient point, or
\[
 \norm{\widetilde g_z}-\delta_g\ge\rho_{\rm stop}/2.\label{eq:ats-center-away}
\]
The overlapping thresholds make this comparison terminate.  In the
second case, geometric refinement of the certified gradient and Hessian
errors produces \eqref{eq:ats-certificate}.  The number of refinements
is logarithmic in the displayed parameters and the requested tolerance.
\end{lemma}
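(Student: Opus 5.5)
The plan is to prove the three assertions of Lemma~\ref{lem:certified-ats} in order. First, that the certificate transfers to the true model. Second, that the center test terminates. Third, that geometric refinement eventually yields \eqref{eq:ats-certificate} after logarithmically many steps.

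\textbf{Transfer of the certificate.} The approximate model \eqref{eq:approx-taylor-prox} is strongly convex once \(\widetilde H_z\succeq\nabla^2g(z)-\delta_HI\succeq\tfrac34\mu I\), because the cubic and proximal terms are convex. Hence its exact minimizer \(d\) is unique and satisfies the first-order condition
\[
 \widetilde g_z+\widetilde H_zd+\tfrac{\overline M}{2}\norm d\,d+\tfrac1\lambda d=0.
\]
Subtracting this from the definition of \(u_z(d)\) gives
\[
 \lambda u_z(d)+d=\lambda\bigl[(\nabla g(z)-\widetilde g_z)+(\nabla^2g(z)-\widetilde H_z)d\bigr],
\]
so \(\norm{\lambda u_z(d)+d}\le\lambda(\delta_g+\delta_H\norm d)\). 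Under \eqref{eq:ats-certificate} this is at most \(\widehat\sigma\norm d\), which is \eqref{eq:ats-condition}.

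\textbf{Termination of the center test.} Halve \(\delta_g\) at each refinement; Lemma~\ref{lem:dual-oracle} supplies any such accuracy at logarithmic cost. Once \(\delta_g\le\rho_{\rm stop}/4\), every value of \(\norm{\widetilde g_z}\) triggers one branch. If \(\norm{\widetilde g_z}\le\rho_{\rm stop}-\delta_g\), then \eqref{eq:ats-center-stop} holds. Otherwise \(\norm{\widetilde g_z}>\tfrac34\rho_{\rm stop}\), so \(\norm{\widetilde g_z}-\delta_g\ge\rho_{\rm stop}/2\). The number of halvings is therefore \(O(\log(\delta_g^{(0)}/\rho_{\rm stop}))\).

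\textbf{Refinement in the away case.} This is the main obstacle: \(d\) depends on the errors, so we need an error-independent lower bound on \(\norm d\). In the second case, \(\norm{\nabla g(z)}\ge\norm{\widetilde g_z}-\delta_g\ge\rho_{\rm stop}/2\). From the optimality condition,
\[
 \norm{\widetilde g_z}=\norm{(\widetilde H_z+\tfrac{\overline M}{2}\norm dI+\lambda^{-1}I)d}\le\bigl(L_g+\mu/4+\tfrac{\overline M}{2}\norm d+\lambda^{-1}\bigr)\norm d.
\]
Since \(\norm{\widetilde g_z}\ge\rho_{\rm stop}/2\), this forces
\[
 \norm d\ge d_{\min}:=\min\Bigl\{\frac{\rho_{\rm stop}}{4(L_g+\mu/4+\lambda^{-1})},\ \sqrt{\frac{\rho_{\rm stop}}{2\overline M}}\Bigr\},
\]
where the two terms come from splitting on which term in the bracket dominates. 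The bound \(d_{\min}\) does not depend on the current errors, and it persists under further refinement because the case condition is preserved when \(\delta_g\) decreases.

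It then suffices to refine until \(\lambda\delta_H\le\widehat\sigma/2\) and \(\lambda\delta_g\le\widehat\sigma d_{\min}/2\). At that point
\[
 \lambda(\delta_g+\delta_H\norm d)\le\tfrac{\widehat\sigma}{2}\norm d+\tfrac{\widehat\sigma}{2}\norm d=\widehat\sigma\norm d,
\]
which is \eqref{eq:ats-certificate}. The certificate is checkable from observed quantities, so we test it after each halving. Reaching these thresholds takes \(O\bigl(\log(\lambda(L_g+\overline M+\lambda^{-1})/(\widehat\sigma\rho_{\rm stop}))\bigr)\) halvings, with the requirement \(\delta_H\le\mu/4\) maintained throughout. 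This count is logarithmic in the displayed parameters and the tolerance, as claimed.
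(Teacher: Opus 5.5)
Your argument is correct. Its first two parts match the paper's proof: the same subtraction of the two optimality equations, and the same overlapping-threshold test. Your remark that $\delta_g\le\rho_{\rm stop}/4$ already forces one of the two branches is a slightly cleaner, case-free version of the paper's reasoning.

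You genuinely differ in how you get an error-independent lower bound on the step. The paper bounds the minimizer $d_\star$ of the \emph{true} model. It uses the true model equation and $\norm{\nabla g(z)}\ge\rho_{\rm stop}/2$ to obtain \eqref{eq:model-step-lower}, then invokes convergence of the computed minimizers $d$ to $d_\star$ to conclude that a fixed fraction of $\norm{d_\star}$ suffices. A logarithmic count there needs a quantitative stability estimate that the paper leaves implicit: strong monotonicity of the true model's gradient map gives $\norm{d-d_\star}\le\lambda(\delta_g+\delta_H\norm d)$. You apply the same inequality directly to the computed $d$, using the observed $\norm{\widetilde g_z}$ and $\norm{\widetilde H_z}\le L_g+\mu/4$. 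So you need no perturbation argument, and the refinement thresholds are explicit. The paper's route has one advantage: its anchor $\norm{d_\star}$ is fixed and unaffected by refinement. Yours must check that the observed lower bound on $\norm{\widetilde g_z}$ survives refinement.

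That check is where your write-up is slightly off. Your claim that the case condition ``is preserved when $\delta_g$ decreases'' is not literally true. A freshly refined estimate only satisfies $\norm{\widetilde g_z}\ge\norm{\nabla g(z)}-\delta_g\ge\rho_{\rm stop}/2-\delta_g$, and need not satisfy \eqref{eq:ats-center-away} again. The fix costs only a constant: derive $d_{\min}$ from $\norm{\widetilde g_z}\ge\rho_{\rm stop}/4$, which holds once $\delta_g\le\rho_{\rm stop}/4$. This is self-consistent, because the resulting target $\delta_g\le\widehat\sigma d_{\min}/(2\lambda)\le\widehat\sigma\rho_{\rm stop}/16$ already lies below $\rho_{\rm stop}/4$. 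Your displayed halving count should also include the initial error levels, though it is logarithmic either way.
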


\begin{proof}
The optimality condition of \eqref{eq:approx-taylor-prox} is
\[
 \widetilde g_z+\widetilde H_zd+
 \frac{\overline M}{2}\norm d\,d+\lambda^{-1}d=0.
\]
Subtracting this identity from the definition of \(u_z(d)\) gives
\[
 \norm{\lambda u_z(d)+d}
 \le\lambda(\delta_g+\delta_H\norm d),
\]
which proves the first assertion.

The center comparison terminates because, if
\(\norm{\nabla g(z)}\le\rho_{\rm stop}/2\), then
\eqref{eq:ats-center-stop} eventually holds, whereas if
\(\norm{\nabla g(z)}\ge\rho_{\rm stop}\), then
\eqref{eq:ats-center-away} eventually holds; in the overlap either
certificate is acceptable.  In the nonterminal case we therefore know
\(\norm{\nabla g(z)}\ge\rho_{\rm stop}/2\).

Let \(d_\star\) be the minimizer of the true model.  Strong convexity of
the model gives \(d_\star\ne0\), and the approximate minimizers converge
to \(d_\star\) as the certified errors decrease.  The true model equation
implies
\[
 \frac{\rho_{\rm stop}}2\le
 (L_g+\lambda^{-1})\norm{d_\star}
 +\frac{\overline M}{2}\norm{d_\star}^2,
\]
so
\[
 \norm{d_\star}\ge
 c\min\left\{
 \frac{\rho_{\rm stop}}{L_g+\lambda^{-1}},
 \sqrt{\frac{\rho_{\rm stop}}{\overline M}}
 \right\}.\label{eq:model-step-lower}
\]
Consequently a fixed relative fraction of \(\norm{d_\star}\) suffices in
\eqref{eq:ats-certificate}, and all requested derivative accuracies
have inverse-polynomial size.  Geometric refinement therefore has only
logarithmic cost.
\end{proof}

\begin{theorem}[Coupled \(\beta\)-line search for one A-HPE iteration]
\label{thm:coupled-beta-search}
Fix constants
\[
 0<\widehat\sigma<1,
 \qquad 0<\sigma_\ell<\sigma_u,
 \qquad \widehat\sigma+\sigma_u<\sigma_0<1,
\]
with
\(\sigma_\ell(1+\widehat\sigma)<
 \sigma_u(1-\widehat\sigma)\).
At an A-HPE state with \(A_k>0\), parameterize
\[
 \lambda(\beta)=A_k\frac{\beta^2}{1-\beta},
 \qquad
 z(\beta)=(1-\beta)y_k+\beta x_k,
 \qquad 0<\beta<1.\label{eq:beta-param}
\]
For every trial \(\beta\), call Lemma~\ref{lem:certified-ats} with
\(\overline M=M\) and center stopping tolerance \(\bar\rho\); if that
subroutine stops at its center, return the certified center.  Otherwise
let \(d_\beta\) be the certified step.  A bisection
procedure using a logarithmic number of such calls either returns a
trial output \(y=z+d\) with the certified bound
\[
 \norm{\nabla g(y)}\le\bar\rho,\label{eq:beta-small-gradient}
\]
or returns \(\beta\), \(\lambda=\lambda(\beta)\), \(z=z(\beta)\), and
\(d=d_\beta\) such that
\[
 \frac{\sigma_\ell}{M}
 \le\lambda\norm d\le
 \frac{\sigma_u}{M}.\label{eq:beta-large-step-window}
\]
The returned \((\lambda,z)\) obeys the coupled A-HPE relations
\eqref{eq:ahpe-a}--\eqref{eq:ahpe-center}.  When \(A_k=0\), the center is
independent of the stepsize; a fixed-center search produces the first
valid step, after which \(A_1=\lambda_1>0\) and the displayed
parameterization applies.
\end{theorem}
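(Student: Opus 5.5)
The plan is a continuity and monotonicity argument in \(\beta\), run on the scalar quantity \(\phi(\beta):=\lambda(\beta)\norm{d_\beta}\). Bisection then searches for \(\phi(\beta)\in[\sigma_\ell/M,\sigma_u/M]\).

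\textbf{Step 1: the coupled relations hold for every trial.} With \(\lambda(\beta)=A_k\beta^2/(1-\beta)\), solve \eqref{eq:ahpe-a}. The identity \(a_{k+1}^2=\lambda_{k+1}A_{k+1}\) gives \(a_{k+1}/A_{k+1}=\beta\), since \(\beta^2/(1-\beta)=\lambda/A_k\) is exactly the fixed-point relation \(a^2=\lambda(A_k+a)\) with \(a=\beta A_{k+1}\). Hence \(z(\beta)\) is precisely the center \eqref{eq:ahpe-center}. The coupling is therefore automatic and only the window \eqref{eq:beta-large-step-window} must be secured.

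\textbf{Step 2: behaviour of the exact step.} Let \(d^\star_\beta\) denote the exact minimizer of the true model \eqref{eq:true-taylor-prox} at \(z(\beta)\) with \(\lambda(\beta)\). The model is strongly convex, so \(d^\star_\beta\) depends continuously on \(\beta\); hence \(\phi^\star(\beta)=\lambda(\beta)\norm{d^\star_\beta}\) is continuous on \((0,1)\).

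At the endpoints:
\begin{itemize}
\item As \(\beta\to0\), we have \(\lambda\to0\), and since \(\norm{d^\star}\le\lambda\norm{\nabla g(z)}\) we get \(\phi^\star\to0<\sigma_\ell/M\).
\item As \(\beta\to1\), we have \(\lambda\to\infty\). Unless \(\norm{\nabla g(z(\beta))}\) becomes small, the cubic term forces \(\norm{d^\star}\gtrsim\sqrt{\norm{\nabla g}/M}\), so \(\phi^\star\to\infty>\sigma_u/M\). This uses the center-stop dichotomy of Lemma~\ref{lem:certified-ats}: along nonterminal trials \(\norm{\nabla g(z)}\ge\bar\rho/2\), which is exactly the lower bound \eqref{eq:model-step-lower}.
\end{itemize}
I would therefore find explicit \(\beta_-\) and \(\beta_+\), polynomially close to \(0\) and \(1\) in the parameters \(A_k,M,\bar\rho,L_g,\norm{x_k-y_k}\), with \(\phi<\sigma_\ell/M\) at \(\beta_-\) and \(\phi>\sigma_u/M\) at \(\beta_+\). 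If some trial instead stops at its center, or produces \(y=z+d\) with certified \(\norm{\nabla g(y)}\le\bar\rho\), we return it as \eqref{eq:beta-small-gradient}.

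\textbf{Step 3: bisection on the interval \([\beta_-,\beta_+]\).} Maintain the invariant that \(\phi\) is below the window at the left end and above it at the right end. The certified step \(d_\beta\) satisfies \eqref{eq:ats-condition}, and combining this with the exact optimality condition should give \(\norm{d_\beta-d^\star_\beta}\le\widehat\sigma'\norm{d^\star_\beta}\). Thus the computed \(\phi\) lies within the multiplicative factor \([1-\widehat\sigma,1+\widehat\sigma]\) of \(\phi^\star\). The hypothesis \(\sigma_\ell(1+\widehat\sigma)<\sigma_u(1-\widehat\sigma)\) then guarantees a nondegenerate target window for \(\phi^\star\) that is detected correctly by the computed \(\phi\).

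\textbf{Main obstacle: termination in logarithmically many steps.} Continuity alone gives termination but no count. I would prove a local Lipschitz bound for \(\log\phi^\star\) in \(\log\frac{\beta}{1-\beta}\) by implicit differentiation of the model equation, using \(\mu\)-strong convexity and the bounds \(L_g\) and \(M\). This bound shows that the \(\beta\)-interval on which \(\phi^\star\) lies in the window has length at least inverse-polynomial. The \(\log(1/\text{length})\) bisection steps, each costing a logarithmic number of refinements by Lemma~\ref{lem:certified-ats}, then give the claimed logarithmic total.

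For \(A_k=0\), \(z=x_0\) is fixed and the same argument applies with \(\lambda\) itself as the bisection variable, searched over a geometric grid. The accepted step then gives \(A_1=a_1=\lambda_1\).
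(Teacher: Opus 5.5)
Your proof is correct, but it follows a different route from the paper's. The paper does no bisection analysis of its own. It invokes Theorem~4.1 of Jiang--Wang--Zhang in the $d=2$, $h\equiv0$ case, with $\alpha_-=\sigma_\ell/M$ and $\alpha_+=\sigma_u/M$; that theorem already gives the dichotomy and the logarithmic count for any subroutine satisfying \eqref{eq:ats-condition}. The paper then plugs in Lemma~\ref{lem:certified-ats} and runs the black box at threshold $\bar\rho/2$ with a two-threshold test, which converts its small-residual alternative into the certified bound \eqref{eq:beta-small-gradient}. It handles $A_k=0$ via monotonicity of $\lambda\mapsto\norm{d(\lambda)}$.

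You instead rebuild the search from first principles: the identity $a_{k+1}/A_{k+1}=\beta$, continuity of $\phi^\star$, explicit brackets $\beta_\pm$ (the upper one from \eqref{eq:model-step-lower}), and a sandwich between computed and exact steps. This makes the argument self-contained. It also shows that center stops and window steps already exhaust the outcomes, and it explains why $\sigma_\ell(1+\widehat\sigma)<\sigma_u(1-\widehat\sigma)$ is exactly the right hypothesis. The price is that you must prove the counting bound yourself. Two points matter for that.

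First, use the sharp sandwich. The map $d\mapsto\lambda u_z(d)+d$ is $1$-strongly monotone, so \eqref{eq:ats-condition} gives $\norm{d_\beta-d^\star_\beta}\le\widehat\sigma\norm{d_\beta}$, i.e.\ $\phi^\star\in[(1-\widehat\sigma)\phi,(1+\widehat\sigma)\phi]$. The safe band $[(1+\widehat\sigma)\sigma_\ell/M,(1-\widehat\sigma)\sigma_u/M]$ for $\phi^\star$ is then nonempty precisely under the stated hypothesis. If you measure the error against $\norm{d^\star_\beta}$ instead, as your $\widehat\sigma'$ does, you would need the stronger condition $\sigma_\ell<(1-2\widehat\sigma)\sigma_u$.

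Second, since $\nabla^2 g$ is only Lipschitz, replace implicit differentiation by strong-monotonicity perturbation bounds:
$\norm{d^\star}\le\norm{\nabla g(z)}/\mu$,
$\norm{d^\star(\lambda)-d^\star(\lambda')}\le\norm{d^\star(\lambda')}\,|1-\lambda/\lambda'|$, and
$\norm{d^\star(z)-d^\star(z')}\le(L_g+M\norm{d^\star(z')})\norm{z-z'}/\mu$.
These give a polynomial Lipschitz constant for $\phi^\star$ itself on $[\beta_-,\beta_+]$. That bounds the number of halvings by $\log_2\bigl((\beta_+-\beta_-)\,\mathrm{Lip}(\phi^\star)\,M/((1-\widehat\sigma)\sigma_u-(1+\widehat\sigma)\sigma_\ell)\bigr)$, without needing a lower bound on $\norm{d^\star}$ between trial points.
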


\begin{proof}
This is the \(d=2\), \(h\equiv0\) specialization of the implementable
A-HPE line search of Jiang--Wang--Zhang~\cite{jiang2021tensor}.  Their
parameterization is exactly \eqref{eq:beta-param}; in particular, the
acceleration center is recomputed at every bisection point rather than
fixed before choosing \(\lambda\).  Their Theorem~4.1, with
\[
 \alpha_-:=\frac{2\sigma_\ell}{M+\overline M}
 =\frac{\sigma_\ell}{M},
 \qquad
 \alpha_+:=\frac{2\sigma_u}{M+\overline M}
 =\frac{\sigma_u}{M},
\]
gives the alternative \eqref{eq:beta-small-gradient} or
\eqref{eq:beta-large-step-window}, and a logarithmic number of
bisection calls, for any approximate Taylor subroutine satisfying
\eqref{eq:ats-condition}.

Lemma~\ref{lem:certified-ats} supplies such a true-model subroutine at
every trial using the simulated oracle.  The quantity
\(\lambda\norm d\) is computed from the accepted model minimizer, so the
window decision has no additional oracle uncertainty.  Run the external
line search with the internal stopping threshold \(\bar\rho/2\).  Its small-gradient alternative is implemented with a two-threshold
certificate: refine a gradient approximation at \(y\) until either
\(\norm{\widetilde g_y}+\delta_y\le\bar\rho\), in which case we stop,
or \(\norm{\widetilde g_y}-\delta_y\ge\bar\rho/2\), in which case the
window bisection continues.  The overlap makes this decision terminate.
If the line-search theorem reaches its small-residual alternative, the
true norm is at most \(\bar\rho/2\), so the upper certificate is obtained
after finitely many refinements; no equality-at-the-boundary loop is
possible.  The line-search theorem and \eqref{eq:model-step-lower} imply that all
required derivative tolerances have inverse-polynomial size.  Thus the
bisection and all derivative refinements cost only logarithmically many
base-oracle calls.

For completeness, when \(A_k=0\) the center is fixed at \(x_0=y_0\).
For the exact Taylor model write
\[
 \phi(d)=\ip{\nabla g(z)}d+
 \frac12\ip{\nabla^2g(z)d}d+
 \frac{M}{6}\norm d^3,
 \qquad
 d(\lambda)=\argmin_d\left\{\phi(d)+
 \frac{\norm d^2}{2\lambda}\right\}.
\]
Monotonicity of \(\partial\phi\) implies that
\(\|d(\lambda)\|\) is nondecreasing in \(\lambda\): applying
monotonicity to the optimality equations at
\(0<\lambda_1<\lambda_2\) rules out
\(\|d(\lambda_1)\|>\|d(\lambda_2)\|\) by Cauchy--Schwarz.
If the stopping test does not hold, the unconstrained model minimizer is
nonzero.  Hence \(\lambda\|d(\lambda)\|\) is continuous, tends to zero
as \(\lambda\downarrow0\), and tends to infinity as
\(\lambda\uparrow\infty\).  The same certified ATS and geometric
refinement therefore provide the first window step by ordinary
fixed-center bisection.
\end{proof}

\begin{lemma}[Certified coupled A-HPE step]
\label{lem:robust-ahpe-step}
Let \(g\) be \(\mu\)-strongly convex, \(L_g\)-smooth, and have an
\(M\)-Lipschitz Hessian.  Suppose certified gradient and Hessian
approximations with arbitrary tolerances are available at logarithmic
base-oracle cost.  Fix a current contraction radius \(D\).  One coupled
line-search call either returns a point at distance at most \(D/2\) from
\(z^\star\), or returns a tuple generated at the center
\eqref{eq:ahpe-center} satisfying the A-HPE relative condition
\eqref{eq:ahpe-relative} with parameter \(\sigma_0\) and
\[
 \lambda\norm{y-z}\ge\frac{\sigma_\ell}{M}.\label{eq:certified-large-step}
\]
The total base-oracle cost is \(\widetilde O(1)\).
\end{lemma}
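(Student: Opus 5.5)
The plan is to run the coupled \(\beta\)-line search of Theorem~\ref{thm:coupled-beta-search} with stopping threshold \(\bar\rho:=\mu D/2\). Every ATS trial inside it is implemented by Lemma~\ref{lem:certified-ats}, which uses the certified gradient and Hessian approximations, always with \(\delta_H\le\mu/4\). The search has two exits, and I treat them separately.

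\emph{Small-gradient exit.} Here the search returns either a certified center or a trial point \(y\) with \(\norm{\nabla g(y)}\le\bar\rho\). Strong convexity gives \(\norm{y-z^\star}\le\norm{\nabla g(y)}/\mu\le D/2\), which is the first alternative.

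\emph{Window exit.} Here the search returns \((\beta,\lambda,z,d)\) that obey the coupled relations \eqref{eq:ahpe-a}--\eqref{eq:ahpe-center}, together with \eqref{eq:beta-large-step-window}. The lower half of that window is exactly \eqref{eq:certified-large-step}, with \(y=z+d\). It remains to produce \(v\) and \(\epsilon\) satisfying \eqref{eq:ahpe-relative} with parameter \(\sigma_0\), and this is the main obstacle. I would take the computed approximate gradient \(v:=\widetilde g_y\) at \(y\), with certified error \(\delta_y\). By Lemma~\ref{lem:gradient-enlargement}, \(v\in\partial_{\epsilon}g(y)\) with \(\epsilon=\delta_y^2/(2\mu)\).

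To verify \eqref{eq:ahpe-relative}, I split the residual as
\[
\lambda v+d=\bigl(\lambda u_z(d)+d\bigr)+\lambda\bigl(\nabla g(y)-u_z(d)\bigr)+\lambda(v-\nabla g(y)).
\]
The three terms are bounded as follows.
\begin{itemize}
\item The ATS condition \eqref{eq:ats-condition} bounds the first term by \(\widehat\sigma\norm d\).
\item Hessian Lipschitzness gives \(\norm{\nabla g(z+d)-\nabla g(z)-\nabla^2g(z)d}\le\frac M2\norm d^2\). Since \(\overline M=M\), the second term is at most \(\lambda M\norm d^2\). The upper half of the window, \(\lambda\norm d\le\sigma_u/M\), turns this into at most \(\sigma_u\norm d\).
\item The third term is at most \(\lambda\delta_y\).
\end{itemize}
Since \(\widehat\sigma+\sigma_u<\sigma_0\), there is a fixed slack \(\sigma_0-\widehat\sigma-\sigma_u>0\).

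It therefore suffices to refine the approximation at \(y\) until
\[
\lambda\delta_y+\sqrt{2\lambda\epsilon}=\lambda\delta_y+\sqrt{\lambda/\mu}\,\delta_y\le(\sigma_0-\widehat\sigma-\sigma_u)\norm d .
\]
This works because \(\norm{a}^2+\norm{b}^2\le(\norm a+\norm b)^2\) lets me absorb the \(2\lambda\epsilon\) term, so \eqref{eq:ahpe-relative} then holds with \(\sigma_0\).

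The required tolerance has inverse-polynomial size. The model-step lower bound \eqref{eq:model-step-lower} bounds \(\norm d\) below, and \(\lambda\) is bounded above through the window \(\lambda\le\sigma_u/(M\norm d)\). Geometric refinement therefore costs only logarithmically many calls.

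\emph{Cost.} I would add the costs of the bisection (logarithmically many trials), the per-trial certificate refinements from Lemma~\ref{lem:certified-ats}, and the final refinement at \(y\). This gives \(\widetilde O(1)\) base-oracle calls. The case \(A_k=0\) is handled by the fixed-center search described in Theorem~\ref{thm:coupled-beta-search}, using the same certificate argument.
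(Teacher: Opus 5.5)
Your proposal is correct and follows the paper's proof essentially step for step: you use the same threshold $\bar\rho=\mu D/2$ and the same strong-convexity argument on the small-gradient exit. On the window exit you use the same three-term residual split (ATS residual, Taylor remainder bounded by $M\norm{d}^2$ via $\overline M=M$ and $\lambda\norm{d}\le\sigma_u/M$, and certified gradient error), combined with Lemma~\ref{lem:gradient-enlargement}. The only cosmetic difference is in how the slack is absorbed: you use the single linear tolerance $(\lambda+\sqrt{\lambda/\mu})\,\delta_y\le(\sigma_0-\widehat\sigma-\sigma_u)\norm{d}$ together with $a^2+b^2\le(a+b)^2$, whereas the paper fixes an $\eta$ with $(\widehat\sigma+\sigma_u+\eta)^2+\eta^2<\sigma_0^2$ and imposes two separate tolerances on $\delta_y$.
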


\begin{proof}
Run Theorem~\ref{thm:coupled-beta-search} with
\(\bar\rho=\mu D/2\).  Its small-gradient outcome at \(y\) gives
\(\norm{y-z^\star}\le\norm{\nabla g(y)}/\mu\le D/2\).
Otherwise put \(y=z+d\).  The true Taylor-model residual and Hessian
Lipschitzness imply
\[
 \begin{aligned}
 \norm{\lambda\nabla g(y)+d}
 &\le \norm{\lambda u_z(d)+d}
 +\lambda\norm{\nabla g(y)-u_z(d)}\\
 &\le(\widehat\sigma+\sigma_u)\norm d,
 \end{aligned}\label{eq:true-ahpe-residual}
\]
because
\[
 \norm{\nabla g(y)-u_z(d)}
 \le\frac{M+\overline M}{2}\norm d^2=M\norm d^2
\]
and \(\lambda\norm d\le\sigma_u/M\).

Choose a fixed slack \(\eta>0\) such that
\[
 (\widehat\sigma+\sigma_u+\eta)^2+\eta^2<\sigma_0^2<1.\label{eq:ahpe-slack}
\]
Obtain a certified gradient approximation \(v\) at \(y\) with error
\(\delta_y\) refined until
\[
 \lambda\delta_y\le\eta\norm d,
 \qquad
 \frac{\lambda\delta_y^2}{\mu}\le\eta^2\norm d^2.\label{eq:output-gradient-tol}
\]
By Lemma~\ref{lem:gradient-enlargement},
\(v\in\partial_{\epsilon_h}g(y)\) for
\(\epsilon_h=\delta_y^2/(2\mu)\).  Equations
\eqref{eq:true-ahpe-residual}--\eqref{eq:output-gradient-tol} give
\[
 \norm{\lambda v+d}^2+2\lambda\epsilon_h
 \le\sigma_0^2\norm d^2,
\]
which is \eqref{eq:ahpe-relative}.  The lower bound in
\eqref{eq:beta-large-step-window} is
\eqref{eq:certified-large-step}.  The coupled bisection takes a
logarithmic number of Taylor-subroutine calls, and each subroutine and
final-gradient refinement has logarithmic base-oracle cost.  Their
product is absorbed by \(\widetilde O(1)\).
\end{proof}

\begin{corollary}[Robust simulated second-order contraction]
\label{cor:robust-contraction}
Under the hypotheses of Lemma~\ref{lem:robust-ahpe-step}, a point at
distance at most \(D\) from \(z^\star\) can be contracted by a fixed factor
in
\[
 \widetilde O\!\left(
 1+\left(\frac{MD}{\mu}\right)^{2/7}
 \right)\label{eq:robust-contraction}
\]
base-oracle calls.  If \(M=0\), the Hessian is constant and one certified
Newton solve gives the contraction at logarithmic base-oracle cost.  Repeated restarts reach any final distance with only
logarithmic additional dependence on the requested tolerance.
\end{corollary}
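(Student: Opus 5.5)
We run the accelerated A-HPE scheme of Theorem~\ref{thm:large-step-ahpe-blackbox} on \(g\), implementing every iteration with Lemma~\ref{lem:robust-ahpe-step}, and stop after a number \(K\) of iterations chosen so that the function-value rate \eqref{eq:ahpe-rate} together with strong convexity halves the distance. Restarting this block then gives the claimed geometric contraction. Fix the current point \(y_0\) with \(\norm{y_0-z^\star}\le D\) and start A-HPE from \(x_0=y_0\), \(A_0=0\). At each iteration, Lemma~\ref{lem:robust-ahpe-step} is invoked with the current radius \(D\). If it returns its first alternative, we already have a point within \(D/2\) of \(z^\star\) and stop. Otherwise it returns a tuple satisfying the relative condition \eqref{eq:ahpe-relative} with parameter \(\sigma_0<1\) and the large-step condition \eqref{eq:certified-large-step}, i.e.\ \eqref{eq:ahpe-large-step} with \(\theta=\sigma_\ell\). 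The tuple is generated at the coupled center \eqref{eq:ahpe-center}, including the fixed-center first step when \(A_0=0\), so the iterates are a genuine large-step A-HPE sequence.

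Theorem~\ref{thm:large-step-ahpe-blackbox} then yields \(g(y_k)-g^*\le C MD^3/k^{7/2}\). By \(\mu\)-strong convexity, \(\norm{y_k-z^\star}^2\le 2(g(y_k)-g^*)/\mu\). Requiring the right-hand side to be at most \(D^2/4\) amounts to \(k^{7/2}\ge 8CMD/\mu\), so
\[
 K=\left\lceil (8CMD/\mu)^{2/7}\right\rceil\le 1+O\!\left((MD/\mu)^{2/7}\right)
\]
iterations suffice. Each iteration costs \(\widetilde O(1)\) base calls by Lemma~\ref{lem:robust-ahpe-step}, which gives \eqref{eq:robust-contraction}. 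The constants \(\sigma_0,\sigma_\ell\) are fixed, so \(C_{\sigma_0,\sigma_\ell}\) is universal. The logarithms hidden in \(\widetilde O(1)\) depend on \(D\), \(\mu\), \(L_g\), \(M\) and the tolerances only through inverse-polynomial accuracy requirements, as guaranteed by \eqref{eq:model-step-lower}.

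The main obstacle is bookkeeping, not analysis. First, the black-box rate assumes an exact oracle. We only use certified approximate quantities, so we must check that the returned \(v\) is a valid \(\epsilon\)-subgradient; this is exactly Lemma~\ref{lem:gradient-enlargement} as used in Lemma~\ref{lem:robust-ahpe-step}. We must also check that \(\norm{y_k-z^\star}\) is measured for the true \(y_k=\widetilde y_k\), which holds because the inexactness affects only \(v\) and \(\epsilon\), not \(y\). Second, the stopping test uses the radius \(D\) of the current stage, which is known, so the early exit is valid.

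For \(M=0\), \(g\) is quadratic. A single Newton step computed from a certified Hessian approximation \(\widetilde H\) with \(\norm{\widetilde H-\nabla^2 g}\le\mu/4\) and a certified gradient lands within \(D/2\) of \(z^\star\) after logarithmic refinement. To see this, the error is \((I-\widetilde H^{-1}\nabla^2g)(y-z^\star)\) plus a gradient-error term, and both are at most \(D/4\) under the stated tolerances.

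Finally, after \(j\) restarts with radius halving each time, the distance is \(2^{-j}D\). Since the per-stage cost is nonincreasing in the radius, reaching final distance \(\tau\) costs \(\log_2(D/\tau)\) times the first-stage cost. This is the claimed logarithmic dependence on the tolerance.
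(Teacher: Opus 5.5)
Your proposal is correct and follows the paper's proof essentially step for step. You run the coupled A-HPE recurrence with Lemma~\ref{lem:robust-ahpe-step} at every iteration, exiting early on the small-gradient alternative. You then combine the \(MD^3/k^{7/2}\) rate with strong convexity at \(k=O((MD/\mu)^{2/7})\), treat \(M=0\) by one certified Newton solve, and restart geometrically, all as the paper does. Two cosmetic points, neither affecting validity:
\begin{itemize}
\item In the Newton step, \(\delta_H\le\mu/4\) only gives \(\|\widetilde H^{-1}\|\le 4/(3\mu)\), so the Hessian-error term is bounded by \(D/3\), not \(D/4\). Either push the gradient term below \(D/6\) or tighten \(\delta_H\).
\item Summing the per-stage polynomial costs as a geometric series in the radius, as the paper does, gives the additive bound \(\widetilde O(\log(D/\tau)+(MD/\mu)^{2/7})\). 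Your bound instead multiplies the first-stage cost by \(\log(D/\tau)\).
\end{itemize}
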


\begin{proof}
If \(M=0\), \(g\) is a strongly convex quadratic.  Certified refinement of
one gradient and the constant Hessian, followed by a free linear solve,
proves the stated contraction.  Assume henceforth that \(M>0\) and run
the full recurrence
\eqref{eq:ahpe-a}--\eqref{eq:ahpe-update}.  At every iteration use
Lemma~\ref{lem:robust-ahpe-step}; hence every nonterminal tuple is a
valid A-HPE tuple at the correctly coupled center and satisfies the
large-step condition with \(\theta=\sigma_\ell\).  Theorem
\ref{thm:large-step-ahpe-blackbox} gives
\(g(y_k)-g^*\le CMD^3/k^{7/2}\).  Taking
\(k=O((MD/\mu)^{2/7})\) makes this at most a sufficiently small multiple
of \(\mu D^2\), and strong convexity contracts the distance.  The
small-gradient alternative contracts immediately.  Geometric restarts
prove the final statement.
\end{proof}

The central point is that \eqref{eq:dual-Hlip} has no polynomial factor in
\(\kap\).  Jointly solving the saddle as a generic strongly monotone
equation loses this structure and would lead to the weaker exponent
\(2/5\); eliminating the constant-conditioned primal block exposes an
ordinary strongly convex problem.  Lemma~\ref{lem:dual-oracle} and
Corollary~\ref{cor:robust-contraction} show that the optimal \(2/7\)
contraction remains implementable even though both reduced-dual gradients
and Hessians are simulated.

\section{Warm-started proximal saddle solves}\label{sec:warm-solve}
We first record the contraction consequence of the certified A-HPE construction above.

\begin{lemma}[Accelerated second-order contraction]\label{lem:convex-second-order}
Let \(g\) be \(\mu\)-strongly convex and have an \(M\)-Lipschitz Hessian.  From a point at distance at most \(D\) from its minimizer, a second-order method can reduce the distance by a fixed constant factor in
\[
 \tO\!\left(1+\left(\frac{MD}{\mu}\right)^{2/7}\right)\label{eq:convex-contract}
\]
queries.  Repeated restarts reach any final distance \(\tau>0\) with the same polynomial term and only logarithmic additional dependence on \(D/\tau\).  Independently, accelerated first-order optimization gives a contraction in \(\tO(\sqrt{L_g/\mu})\) queries when \(g\) is \(L_g\)-smooth.
\end{lemma}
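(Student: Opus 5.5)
The plan is to read Lemma~\ref{lem:convex-second-order} as a repackaging of Corollary~\ref{cor:robust-contraction}, adding two things: the restart accounting and the first-order alternative. There are three parts.

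\textbf{The contraction.} The contraction claim is exactly Corollary~\ref{cor:robust-contraction} applied with exact derivatives, or with certified ones when $g=\psi_x$ is accessed through Lemma~\ref{lem:dual-oracle}. For completeness I would retrace its mechanism. We run the coupled A-HPE recurrence, with each step produced by Lemma~\ref{lem:robust-ahpe-step} at radius $D$. Either a step returns a point within $D/2$ of the minimizer, in which case we stop, or every step is a valid large-step tuple with $\theta=\sigma_\ell$. In the second case Theorem~\ref{thm:large-step-ahpe-blackbox} gives
\[
g(y_k)-g^*\le C\,MD^3/k^{7/2}.
\]
Strong convexity gives $\tfrac\mu2\norm{y_k-y^\star}^2\le g(y_k)-g^*$. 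So choosing
\[
k\asymp (MD/\mu)^{2/7}
\]
yields $\norm{y_k-y^\star}\le D/2$. Each step costs $\widetilde O(1)$, so the contraction costs $\widetilde O\bigl(1+(MD/\mu)^{2/7}\bigr)$. The case $M=0$ is a single certified Newton step.

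\textbf{The restarts.} We restart from the current point at radius $D_j=2^{-j}D$, using the a priori bound $\norm{y-y^\star}\le D_j$ at stage $j$. Stage $j$ costs $\widetilde O\bigl(1+(MD/\mu)^{2/7}2^{-2j/7}\bigr)$. Summing over $j\le\lceil\log_2(D/\tau)\rceil$, the polynomial terms form a geometric series bounded by a constant multiple of $(MD/\mu)^{2/7}$. The additive $1$'s contribute $O(\log(D/\tau))$. This gives the claimed form. One point needs checking: the certified radius used to set $\bar\rho=\mu D_j/2$ must be a valid upper bound. It is, because each stage outputs a certified distance bound, either via $\norm{\nabla g}/\mu$ or via the A-HPE value bound.

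\textbf{The first-order alternative.} For $L_g$-smooth $g$, Nesterov's accelerated gradient method for strongly convex functions gives
\[
g(y_k)-g^*\le (1-\sqrt{\mu/L_g})^k\,\bigl(g(y_0)-g^*+\tfrac\mu2 D^2\bigr).
\]
The right-hand side is at most $(1-\sqrt{\mu/L_g})^k L_g D^2$. Reaching $\mu D^2/8$ therefore takes $O\bigl(\sqrt{L_g/\mu}\,\log(L_g/\mu)\bigr)$ iterations. Each iteration uses one (simulated) gradient, so the cost is $\widetilde O(\sqrt{L_g/\mu})$.

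\textbf{Main obstacle.} The real work lies in the certification inside Corollary~\ref{cor:robust-contraction}, which we take as given. The only remaining subtlety is making sure the inexactness tolerances stay inverse-polynomial across restarts, so that all hidden factors remain logarithmic. Since $\bar\rho=\mu D_j/2\ge\mu\tau/2$, this follows from \eqref{eq:model-step-lower}.
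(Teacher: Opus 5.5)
Your proposal is correct and follows the paper's proof. Both invoke Corollary~\ref{cor:robust-contraction} for the $O(MD^3/k^{7/2})$ gap bound and use strong convexity to turn a gap of order $\mu D^2$ into halving the distance. Both then sum the geometric series of $(MD_j/\mu)^{2/7}$ over radii $D_j=2^{-j}D$ and cite the standard accelerated strongly convex rate for the first-order branch. Your extra checks are more explicit than, but consistent with, the paper's short argument: you verify that $\bar\rho=\mu D_j/2$ uses a valid radius at each restart, and that tolerances stay inverse-polynomial via the step-length lower bound in Lemma~\ref{lem:certified-ats}.
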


\begin{proof}
Corollary~\ref{cor:robust-contraction} gives
\[
 g(y_T)-g^*\le\tO\!\left(\frac{M D^3}{T^{7/2}}\right).
\]
Strong convexity converts a target gap \(c\mu D^2\) into constant-factor distance contraction, giving \(T=\tO((MD/\mu)^{2/7})\).  Restarting over geometrically decreasing radii makes the sum of polynomial costs geometric.  The first-order statement is the usual accelerated strongly-convex rate.
\end{proof}

Apply Lemma~\ref{lem:convex-second-order} to \(\psi_x\).  Lemmas~\ref{lem:dual-curvature} and \ref{lem:dual-oracle} give the following.

\begin{proposition}[Warm-started proximal saddle solve]\label{prop:warm-solve}
Suppose \(y^{\rm in}\) satisfies \(\norm{y^{\rm in}-v_x}\le D\).  For every target joint accuracy \(\tau>0\), the saddle \(z_x=(u_x,v_x)\) can be approximated within \(\tau\) using
\[
 \boxed{
 Q_{\rm sad}(D,\tau)
 \le\tO\!\left(
 1+\min\left\{\sqrt\kap,
 \left(\frac{\rho D}{\mu}\right)^{2/7}\right\}
 \right)
 }\label{eq:saddle-solve}
\]
original bundled calls.  Dependence on \(\tau\) is logarithmic.
\end{proposition}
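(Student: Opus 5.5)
The plan is to apply Lemma~\ref{lem:convex-second-order} to the reduced dual \(g=\psi_x\). We run two solvers in parallel, or simply pick the cheaper one, since both costs are computable from known quantities up to the unknown \(D\); taking the minimum then gives \eqref{eq:saddle-solve}. Afterwards we convert the dual accuracy into a joint accuracy for \(z_x\).

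\textbf{Step 1: parameters of the reduced dual.} By Lemma~\ref{lem:dual-curvature}, \(\psi_x\) is \(\mu\)-strongly convex and \(2L\)-smooth, and it has a \(C_d\rho\)-Lipschitz Hessian. Its minimizer is \(v_x\). The second-order branch of Lemma~\ref{lem:convex-second-order}, with \(M=C_d\rho\), therefore contracts from distance \(D\) in \(\tO(1+(C_d\rho D/\mu)^{2/7})\) steps. The constant \(C_d^{2/7}\) is absorbed. Restarts reach any dual accuracy \(\tau_y\) with only a logarithmic dependence on \(D/\tau_y\).

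The first-order branch contracts in \(\tO(\sqrt{2L/\mu})=\tO(\sqrt\kap)\) steps. Here one must check that geometric restarts of accelerated gradient descent give \(\tO(\sqrt\kap\log(D/\tau_y))\). Each step needs a reduced-dual gradient or Hessian query. Lemma~\ref{lem:dual-oracle} simulates such a query to inverse-polynomial accuracy with \(\tO(1)\) bundled calls. For the second-order branch, the errors are handled by the certified machinery: Lemma~\ref{lem:certified-ats}, Theorem~\ref{thm:coupled-beta-search}, Lemma~\ref{lem:robust-ahpe-step}, and Corollary~\ref{cor:robust-contraction}. For the first-order branch, one uses a standard inexact-gradient analysis of accelerated methods, with errors at most a small multiple of \(\mu\) times the current radius. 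That requirement is again an inverse-polynomial tolerance.

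\textbf{Step 2: choosing the branch without knowing \(D\).} \(D\) is an input bound, so the algorithm can compare \((\rho D/\mu)^{2/7}\) with \(\sqrt\kap\) directly and run the cheaper method. The branch can also be re-chosen at every restart radius. The total cost is then a geometric sum dominated by the first phase, plus a logarithmic number of \(\tO(1)\) phases, which gives the boxed bound.

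\textbf{Step 3: primal recovery.} From \(\widetilde v\) with \(\norm{\widetilde v-v_x}\le\tau_y\), compute \(\widetilde u\approx u_x(\widetilde v)\) to accuracy \(e_u\). This takes \(\tO(1)\) calls by the certified residual argument in Lemma~\ref{lem:dual-oracle}. Lemma~\ref{lem:primal-elim} gives \(\norm{u_x(\widetilde v)-u_x}\le\tau_y\), since \(u_x=u_x(v_x)\) and \(u_x(\cdot)\) is \(1\)-Lipschitz. Hence \(\norm{(\widetilde u,\widetilde v)-z_x}\le\sqrt2\tau_y+e_u\). Choosing \(\tau_y,e_u\asymp\tau\) finishes the argument.

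The main obstacle is Step 1's error bookkeeping: making sure every simulated-oracle tolerance is inverse-polynomial in the problem parameters and in \(D/\tau\), so that the refinement costs stay logarithmic. For the second-order branch this is exactly what Corollary~\ref{cor:robust-contraction} packages, so I would cite it and verify only that its hypotheses hold for \(\psi_x\) with \(L_g=2L\) and \(M=C_d\rho\).
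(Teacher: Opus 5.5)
Your proposal is correct and follows essentially the same route as the paper. Like the paper, you apply the certified second-order contraction (Corollary~\ref{cor:robust-contraction}) and the accelerated first-order alternative to the reduced dual \(\psi_x\), using the curvature bounds of Lemma~\ref{lem:dual-curvature} and the simulated oracle of Lemma~\ref{lem:dual-oracle}, and then recover \(u_x\) by a constant-conditioned primal solve justified by Lemma~\ref{lem:primal-elim}; your per-radius branch selection and the explicit bound \(\norm{u_x(\widetilde v)-u_x}\le\norm{\widetilde v-v_x}\) only spell out details that the paper's terse proof leaves implicit.
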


\begin{proof}
The reduced dual is \(\mu\)-strongly convex, \(2L\)-smooth, and has an \(O(\rho)\)-Lipschitz Hessian.  Lemma~\ref{lem:dual-oracle} supplies certified approximate gradients and Hessians at logarithmic original-oracle cost, so Corollary~\ref{cor:robust-contraction} gives the second-order branch.  The accelerated first-order alternative gives the other branch.  Once \(v_x\) is approximated, Lemma~\ref{lem:primal-elim} and a constant-conditioned primal solve recover \(u_x\).  All primal refinements and the final constant-conditioned primal recovery change only logarithmic factors.
\end{proof}

For consecutive outer centers, let \(\widetilde v_{t-1}\) denote the
certified reduced-dual approximation returned at \(x_{t-1}\), and set
\[
 \tau_{t-1}:=\norm{\widetilde v_{t-1}-v_{x_{t-1}}},
 \qquad
 D_t:=\norm{\widetilde v_{t-1}-v_{x_t}}.
\]
Thus \(D_t\) is the initial distance from the warm start
\(\widetilde v_{t-1}\) to the reduced-dual minimizer at the new center.
Lemma~\ref{lem:sens} gives
\[
 D_t\le \tau_{t-1}+2\sqrt\kap\norm{x_t-x_{t-1}}.\label{eq:warm-distance}
\]
This is the interface that will be amortized rather than bounded separately at every iteration.

\section{Outer algorithms and proof of the upper bound}\label{sec:upper-proof}
We need two robust outer facts.  Their standard proofs follow from the descent lemma for gradient descent and the cubic Taylor upper model for cubic regularization.

\begin{lemma}[Robust outer bounds]\label{lem:outer}
Let \(E\) be bounded below with initial gap at most \(\Delta\).
\begin{enumerate}[label=(\roman*)]
\item If \(\nabla E\) is \(L_E\)-Lipschitz and each queried gradient has error at most a sufficiently small constant multiple of \(\eps\), inexact gradient descent reaches \(\norm{\nabla E}\le\eps\) in
\[
 N_1=O(\Delta L_E\eps^{-2})
\]
iterations, with steps \(s_t\) satisfying
\[
 \sum_t\norm{s_t}^2\le C\frac\Delta{L_E}.\label{eq:gd-step-sum}
\]
\item If \(\nabla^2E\) is \(M_E\)-Lipschitz, and gradient and Hessian errors are bounded by small constant multiples of \(\eps\) and \(\sqrt{M_E\eps}\), respectively, inexact cubic regularization reaches \(\norm{\nabla E}\le\eps\) in
\[
 N_2=O(\Delta\sqrt{M_E}\eps^{-3/2})
\]
iterations, with
\[
 \sum_t\norm{s_t}^3\le C\frac\Delta{M_E}.\label{eq:cubic-step-sum}
\]
\end{enumerate}
\end{lemma}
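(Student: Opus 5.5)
For both parts the plan is to derive a one-step descent inequality with an explicit step-length term, sum it against the gap \(\Delta\), and read off both the iteration count and the step-energy bound from the same telescoped sum.

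\textbf{Part (i).} Take the inexact step \(s_t=-\widetilde g_t/L_E\), where \(\norm{\widetilde g_t-\nabla E(x_t)}\le c\eps\). The descent lemma gives
\[
 E(x_{t+1})\le E(x_t)+\ip{\nabla E(x_t)}{s_t}+\frac{L_E}{2}\norm{s_t}^2 .
\]
Substituting \(\nabla E(x_t)=\widetilde g_t+e_t\) and \(s_t=-\widetilde g_t/L_E\), this becomes
\[
 E(x_{t+1})\le E(x_t)-\frac{L_E}{2}\norm{s_t}^2+\norm{e_t}\norm{s_t}.
\]
The algorithm stops once \(\norm{\widetilde g_t}\le(1-c)\eps\), which certifies \(\norm{\nabla E}\le\eps\). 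Before stopping, \(\norm{\widetilde g_t}>(1-c)\eps\), so \(\norm{e_t}\le c\eps\le\frac{c}{1-c}L_E\norm{s_t}\). For small \(c\) the error term is therefore at most \(\frac{L_E}{4}\norm{s_t}^2\), leaving a decrease of at least \(\frac{L_E}{4}\norm{s_t}^2\) per step. Telescoping against the gap gives \(\sum_t\norm{s_t}^2\le4\Delta/L_E\), which is \eqref{eq:gd-step-sum}. Since each non-terminal step has \(\norm{s_t}^2\ge(1-c)^2\eps^2/L_E^2\), the number of steps is at most \(N_1=O(\Delta L_E\eps^{-2})\).

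\textbf{Part (ii).} Use the Nesterov--Polyak step \(s_t\), the exact minimizer of the inexact cubic model with regularization \(M=2M_E\):
\[
 \ip{\widetilde g_t}{s}+\frac12\ip{\widetilde H_ts}{s}+\frac M6\norm s^3 .
\]
Two standard facts drive the argument.

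\emph{Model decrease.} Model optimality gives a decrease of at least \(\frac{M}{12}\norm{s_t}^3\). The cubic Taylor bound adds at most \(\frac{M_E}{6}\norm{s_t}^3\) of model error, and the inexact data add at most
\[
 \norm{g_t-\widetilde g_t}\norm{s_t}+\tfrac12\norm{H_t-\widetilde H_t}\norm{s_t}^2 .
\]
Together these yield \(E(x_t)-E(x_{t+1})\ge c_1M_E\norm{s_t}^3-O(\eps\norm{s_t}+\sqrt{M_E\eps}\norm{s_t}^2)\).

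\emph{Gradient at the new point.} From the model stationarity condition and Hessian Lipschitzness,
\[
 \norm{\nabla E(x_{t+1})}\le C_2M_E\norm{s_t}^2+\delta_g+\delta_H\norm{s_t}.
\]

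Call step \(t\) \emph{unsuccessful} if \(\norm{\nabla E(x_{t+1})}>\eps\). With \(\delta_g\le c\eps\) and \(\delta_H\le c\sqrt{M_E\eps}\) for small \(c\), the second fact forces \(\norm{s_t}\ge c_3\sqrt{\eps/M_E}\) on every unsuccessful step. Under that lower bound the error terms \(\eps\norm{s_t}\) and \(\sqrt{M_E\eps}\norm{s_t}^2\) are dominated by a small constant times \(M_E\norm{s_t}^3\), so each unsuccessful step still decreases \(E\) by at least \(c_4M_E\norm{s_t}^3\). Telescoping gives \(\sum_t\norm{s_t}^3\le C\Delta/M_E\), which is \eqref{eq:cubic-step-sum}. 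Each unsuccessful step also has \(\norm{s_t}^3\ge c_3^3(\eps/M_E)^{3/2}\), so the count is at most \(N_2=O(\Delta\sqrt{M_E}\eps^{-3/2})\).

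To detect termination, certify \(\norm{\nabla E}\le\eps\) through the inexact gradient with a two-threshold test, as in Lemma~\ref{lem:certified-ats}.

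\textbf{Main obstacle.} The delicate step is the error absorption in (ii). It only works if the error tolerances are scaled as \(\eps\) and \(\sqrt{M_E\eps}\) with constants small relative to \(c_3\), and the steps controlled this way are exactly the unsuccessful ones. The telescoping is over these steps only, so the final successful step (whose length has no lower bound) contributes a single extra term to the cube sum. That term is bounded separately by the model-decrease estimate plus \(O(\eps^{3/2}/\sqrt{M_E})\), which is absorbed into \(C\Delta/M_E\) whenever \(\Delta\gtrsim\eps^{3/2}/\sqrt{M_E}\); in the opposite case the statement is trivial.
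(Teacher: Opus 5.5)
Your proposal is correct and follows essentially the same route as the paper: for (i), the inexact step $s_t=-\widetilde g_t/L_E$ with the descent lemma; for (ii), a cubic upper model whose nonterminal steps decrease $E$ by $cM_E\|s_t\|^3$ while the next gradient is $O(M_E\|s_t\|^2)$ up to the oracle errors, summed against $\Delta$ (regularizing with $2M_E$ rather than the paper's $M_E$ changes nothing). There are two bookkeeping points. With $M=2M_E$, subtracting a Taylor error of $\frac{M_E}{6}\|s_t\|^3$ from the model decrease $\frac{M}{12}\|s_t\|^3$ literally leaves zero; use instead that the model's own cubic term $\frac{M}{6}\|s_t\|^3$ already dominates the remainder, which gives $c_1=\tfrac13$. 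Also, the regime $\Delta\ll\varepsilon^{3/2}/\sqrt{M_E}$ is not trivial with inexact gradients: one step can be forced, and its cube need not be $O(\Delta/M_E)$. Both bounds should therefore be read with additive $O(1)$ and $O((\varepsilon/M_E)^{3/2})$ terms; this is harmless downstream, and the paper's sketch also leaves it implicit.
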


\begin{proof}
For gradient descent, use the step \(s_t=-\widetilde g_t/L_E\) and the
smoothness inequality.  When the gradient error is at most a sufficiently
small multiple of \(\varepsilon\), every nonterminal step decreases the
objective by \(c\|\nabla E(x_t)\|^2/L_E\) and also satisfies
\(\|s_t\|^2\le C(E(x_t)-E(x_{t+1}))/L_E\).  Summation gives both claims.
For cubic regularization, minimize the standard upper model
\(\langle\widetilde g_t,s\rangle+\langle\widetilde H_ts,s\rangle/2+
M_E\|s\|^3/6\).  The Taylor remainder and the stated oracle errors imply
that every nonterminal step decreases the objective by
\(cM_E\|s_t\|^3\) and that the next gradient is
\(O(M_E\|s_t\|^2)\) up to the prescribed error.  Summing the decreases and
using \(\|\nabla E\|>\varepsilon\) before termination yields the iteration
and cubic-step bounds.
\end{proof}

At an approximate saddle \(\widetilde z=(\widetilde u,\widetilde v)\) for center \(x\), define the plug-in envelope gradient and Hessian by
\[
 \widetilde g_{\calE}=2L(x-\widetilde u),
 \qquad
 \widetilde H_{\calE}=2LI-4L^2\widetilde S^{-1},\label{eq:E-plugin}
\]
where \(\widetilde S\) is the Schur complement evaluated at \(\widetilde z\).

\begin{lemma}[Envelope-oracle simulation]\label{lem:E-oracle}
If \(\norm{\widetilde z-z_x}\le\tau\), then
\[
 \norm{\widetilde g_{\calE}-\nabla\calE(x)}\le2L\tau,\label{eq:E-grad-error}
\]
\[
 \norm{\widetilde H_{\calE}-\nabla^2\calE(x)}
 \le C_E\rho(1+\kap)^2\tau.\label{eq:E-hess-error}
\]
Thus the inexact gradient and cubic tolerances in Lemma~\ref{lem:outer} can be met with a target \(\tau\) whose effect on Proposition~\ref{prop:warm-solve} is only logarithmic.
\end{lemma}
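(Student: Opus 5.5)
The gradient bound is immediate from the exact transfer. By Lemma~\ref{lem:transfer}, \(\nabla\calE(x)=2L(x-u_x)\), while \(\widetilde g_{\calE}=2L(x-\widetilde u)\). Their difference is \(2L(u_x-\widetilde u)\), and \(\norm{\widetilde u-u_x}\le\norm{\widetilde z-z_x}\le\tau\), which gives \eqref{eq:E-grad-error}.

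For the Hessian, I would use formula \eqref{eq:E-hess} and write
\[
 \widetilde H_{\calE}-\nabla^2\calE(x)=4L^2\bigl(S_x^{-1}-\widetilde S^{-1}\bigr)=4L^2\,\widetilde S^{-1}(\widetilde S-S_x)S_x^{-1}.
\]
This is the resolvent identity. The plan has two ingredients.

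First, I would bound the inverses. I must check that \(\widetilde S\) is well defined and that \(\widetilde S\succeq LI\), or at least \(\widetilde S\succeq \tfrac L2 I\). Positive semidefiniteness of the Schur term \(-\widetilde H_{uy}\widetilde H_{yy}^{-1}\widetilde H_{yu}\) holds at every point, since \(\widetilde H_{yy}\preceq-\mu I\) by strong concavity. In addition \(\widetilde H_{uu}+2LI\succeq LI\) holds everywhere. So \(\widetilde S\succeq LI\) at any point, not only at the exact saddle, and \(\norm{\widetilde S^{-1}}\norm{S_x^{-1}}\le L^{-2}\).

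Second, I would show that \(S\) is Lipschitz along the segment. I expand \(\widetilde S-S_x\) block by block. The \(uu\) block changes by at most \(\rho\tau\). The three-factor term \(H_{uy}H_{yy}^{-1}H_{yu}\) changes through three contributions:
\begin{itemize}
\item each of the two off-diagonal factors changes by at most \(\rho\tau\), multiplied by \(\norm{H_{yy}^{-1}}\norm{H_{uy}}\le L/\mu=\kap\);
\item the middle inverse changes by \(\norm{\widetilde H_{yy}^{-1}-H_{yy}^{-1}}\le\mu^{-2}\rho\tau\), multiplied by \(L^2\), which gives \(\kap^2\rho\tau\).
\end{itemize}
Hence \(\norm{\widetilde S-S_x}\le(1+2\kap+\kap^2)\rho\tau=(1+\kap)^2\rho\tau\). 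Combining with the inverse bounds gives \(\norm{\widetilde H_{\calE}-\nabla^2\calE(x)}\le4(1+\kap)^2\rho\tau\), so \(C_E=4\) works.

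The main point of care is that the \(\kap^2\) factor arises from \(H_{yy}^{-1}\) and cannot be avoided in this plug-in formula. That factor is harmless for the final claim. Lemma~\ref{lem:outer} requires gradient error \(c\eps\) and Hessian error \(c\sqrt{M_{\calE}\eps}\). It therefore suffices to take
\[
 \tau\le\min\Bigl\{\frac{c\eps}{2L},\ \frac{c\sqrt{M_{\calE}\eps}}{4\rho(1+\kap)^2}\Bigr\},
\]
which is inverse polynomial in \(L,\rho,\kap,\eps^{-1}\). Proposition~\ref{prop:warm-solve} has only logarithmic dependence on \(\tau\), so meeting these tolerances changes the inner cost by logarithmic factors only.

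A secondary subtlety is that if \(M_{\calE}\) is replaced by its upper bound \(8\rho(1+\kap)^{3/2}\), the required \(\tau\) is still polynomial. The degenerate case \(\rho=0\) is also harmless: then \(H\) is constant, the Hessian error vanishes, and only the gradient constraint on \(\tau\) remains.
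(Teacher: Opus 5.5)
Your proposal is correct and follows essentially the same route as the paper. The gradient bound comes from the exact transfer. The Hessian bound uses the resolvent identity with both \(\widetilde S\) and \(S_x\) bounded below by \(LI\), together with a block-by-block expansion of the Schur-complement difference that gives \(\norm{\widetilde S-S_x}\le(1+\kap)^2\rho\tau\). Your explicit bookkeeping also pins down \(C_E=4\), which the paper leaves as an unspecified universal constant.
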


\begin{proof}
The gradient estimate is immediate.  Every exact or approximate Schur complement is at least \(LI\).  Expanding the difference of Schur complements and using \(\norm{H_{uy}}\le L\), \(\norm{H_{yy}^{-1}}\le1/\mu\), Hessian Lipschitzness, and the resolvent identity gives
\[
 \norm{\widetilde S-S_x}\le C\rho(1+\kap)^2\tau.
\]
Applying the inverse resolvent identity inside \eqref{eq:E-plugin} proves \eqref{eq:E-hess-error}.
\end{proof}

\subsection{The middle branch: envelope gradient descent}
Apply Lemma~\ref{lem:outer}(i) to \(E=\calE\).  Lemmas~\ref{lem:transfer} and \ref{lem:E-reg} give
\[
 N_1=O(A_1),
 \qquad
 \sum_t\norm{s_t}^2\le C\frac\Delta L.\label{eq:gd-data}
\]
At every outer point solve the proximal saddle to gradient-oracle accuracy \(\tau_g=c\eps/L\).  Warm-start the next reduced-dual solve at the previous approximate dual point.  Equations \eqref{eq:saddle-solve} and \eqref{eq:warm-distance}, together with \((a+b)^{2/7}\le a^{2/7}+b^{2/7}\), give for the second-order inner branch
\begin{align}
 Q_{\rm GD}
 &\le\tO\!\left(
 N_1+N_1\left(\frac{\rho\tau_g}{\mu}\right)^{2/7}
 +\left(\frac{\rho\sqrt\kap}{\mu}\right)^{2/7}
 \sum_t\norm{s_t}^{2/7}
 \right).\label{eq:gd-cost-start}
\end{align}
Holder's inequality gives
\[
 \sum_t\norm{s_t}^{2/7}
 \le N_1^{6/7}
 \left(\sum_t\norm{s_t}^2\right)^{1/7}.\label{eq:holder-gd}
\]
Substituting \eqref{eq:gd-data}, \(\mu=L/\kap\), and \eqref{eq:def-r} into \eqref{eq:gd-cost-start} yields
\[
 Q_{\rm GD}
 \le\tO\!\left(A_1(1+r^{2/7})\right),\label{eq:gd-final}
\]
because the fixed tolerance term is
\[
 A_1\left(\frac{\rho\eps}{L\mu}\right)^{2/7}
 =A_1\left(\frac r{\sqrt\kap}\right)^{2/7}
 \le A_1(1+r^{2/7}).
\]
The nontrivial middle contribution can be written explicitly as
\[
 A_1r^{2/7}
 =\Delta\rho^{2/7}\kap^{3/7}L^{3/7}\eps^{-12/7}.\label{eq:middle-rate}
\]
Using the first-order branch in Proposition~\ref{prop:warm-solve} instead gives
\[
 Q_{\rm GD,FO}\le\tO(\sqrt\kap A_1).\label{eq:FO-cap}
\]

\subsection{The high-accuracy branch: envelope cubic regularization}
Apply Lemma~\ref{lem:outer}(ii) to \(E=\calE\).  From \eqref{eq:ME},
\[
 N_2=O(A_2),
 \qquad
 \sum_t\norm{s_t}^3
 \le C\frac\Delta{\rho\kap^{3/2}}.\label{eq:cubic-data}
\]
Simulate each envelope gradient and Hessian using Proposition~\ref{prop:warm-solve} and Lemma~\ref{lem:E-oracle}.  At every outer center require the saddle accuracy
\[
 \tau_c\le
 \min\left\{
  c_g\frac{\eps}{L},
  c_H\frac{\sqrt{M_{\calE}\eps}}
                 {\rho(1+\kap)^2},
  c_0\frac{\mu}{\rho}
 \right\},\label{eq:cubic-fixed-target}
\]
with sufficiently small universal constants, using the convention
\(\mu/\rho=+\infty\) when \(\rho=0\).  The first two terms certify the
inexact-gradient and inexact-Hessian requirements of
Lemma~\ref{lem:outer}; the third is an optional stricter target.  It changes the inner work only logarithmically and makes the fixed warm-start contribution satisfy
\[
 N_2\left(\frac{\rho\tau_c}{\mu}\right)^{2/7}
 \le C N_2.
\]
The remaining, movement-dependent second-order reduced-dual contribution is bounded by
\[
 \left(\frac{\rho\sqrt\kap}{\mu}\right)^{2/7}
 \sum_t\norm{s_t}^{2/7}.
\]
Now
\[
 \sum_t\norm{s_t}^{2/7}
 \le N_2^{19/21}
 \left(\sum_t\norm{s_t}^3\right)^{2/21}.\label{eq:holder-cubic}
\]
Substitution of \eqref{eq:cubic-data} gives
\[
 Q_{\rm cubic}
 \le\tO\!\left(A_2(1+r^{1/7})\right).\label{eq:cubic-final}
\]
In particular, if \(r\le1\), this is \(\tO(A_2)\).

\subsection{Output certification and proof of Theorem~\ref{thm:main}}
At the terminal outer center \(x\), refine the proximal saddle until
\[
 \norm{\widetilde u-u_x}
 \le \frac{c\eps}{L(1+\kap)}.\label{eq:output-tol}
\]
Since \(\nabla\Phi\) is \(L(1+\kap)\)-Lipschitz and \(\nabla\Phi(u_x)=\nabla\calE(x)\),
\[
 \norm{\nabla\Phi(\widetilde u)}
 \le\norm{\nabla\calE(x)}
 +L(1+\kap)\norm{\widetilde u-u_x}
 \le\eps
\]
for suitable constants in the outer stopping rule.  The final refinement changes only logarithmic factors.

The initial dual distance obeys
\[
 \norm{y_0-v_{x_0}}
 \le R_y+\kap\norm{u_{x_0}-x_0}
 \le R_y+\kap\sqrt{\Delta/L}=D_{y,0},
\]
which proves \eqref{eq:init} by Proposition~\ref{prop:warm-solve}.  Taking the minimum of \eqref{eq:gd-final}, \eqref{eq:FO-cap}, and \eqref{eq:cubic-final} proves \eqref{eq:main-upper}.  Comparing powers of \(r\) gives the piecewise form \eqref{eq:Psi}, completing the proof of Theorem~\ref{thm:main}.

\section{Randomized lower bound}\label{sec:lower}
We record a self-contained lift from an all-derivative single-level lower bound.

\begin{lemma}[Two-smoothness all-derivative lower bound]\label{lem:single-lower}
There are universal constants \(c,C>0\) such that for every \(\ell,\rho,\delta,\Delta>0\) with
\[
 T=c\Delta\min\{\ell\delta^{-2},\sqrt\rho\,\delta^{-3/2}\}\ge C,
\]
there is a dimension \(d=O(T^2\log T)\) and a smooth function \(F:\R^d\to\R\) satisfying
\[
 F(0)-\inf F\le\Delta,
 \quad
 \norm{\nabla^2F}\le\ell,
 \quad
 \mathrm{Lip}(\nabla^2F)\le\rho,\label{eq:F-class}
\]
such that every randomized local algorithm, even one receiving all derivatives at each query, needs at least \(T\) queries to find \(w\) with \(\norm{\nabla F(w)}\le\delta\) with probability at least \(2/3\).
\end{lemma}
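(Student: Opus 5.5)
The plan is to derive Lemma~\ref{lem:single-lower} from the randomized all-derivative lower bound of Carmon et al.~\cite{carmon}, used as a black box. I will apply it separately to the two terms of the minimum and keep whichever instance is harder. That reference gives a randomized construction: a rotated chain function composed with a bounded soft-projection, in dimension \(d=O(T^2\log T)\). Its base function \(\bar F_T:\R^{d}\to\R\) has \(\bar F_T(0)-\inf\bar F_T\le\Delta_0 T\), order-one bounds \(\ell_1,\ell_2\) on the Hessian and on the Hessian Lipschitz constant, and gradient norm at least a universal constant \(\bar\delta\) at every point. This gradient bound holds unless the algorithm has discovered all \(T\) hidden directions. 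Discovering them requires \(T\) queries with probability at least \(2/3\), for every randomized local algorithm receiving all derivatives.

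\textbf{Scaling step.} I will set \(F(w)=\lambda\sigma^2\bar F_T(w/\sigma)\). Then
\[
\nabla F=\lambda\sigma\nabla\bar F_T,\qquad
\nabla^2F=\lambda\nabla^2\bar F_T,\qquad
\mathrm{Lip}(\nabla^2F)=(\lambda/\sigma)\ell_2 .
\]
I will choose \(\lambda\sigma\bar\delta=2\delta\), so that \(\norm{\nabla F}>\delta\) off the discovered set. The remaining constraints are
\[
\lambda\ell_1\le\ell,\qquad \lambda\ell_2/\sigma\le\rho,\qquad \lambda\sigma^2\Delta_0T\le\Delta .
\]

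\textbf{Case split.} In the first-order case, take \(\lambda\asymp\ell\), so \(\sigma\asymp\delta/\ell\). The value budget then gives \(T\asymp\Delta\ell\delta^{-2}\). The Hessian-Lipschitz constraint reads \(\ell^2/\delta\lesssim\rho\); when it fails, I instead use the second-order case. There, take \(\lambda\asymp\sqrt{\rho\delta}\) and \(\sigma\asymp\sqrt{\delta/\rho}\), so that both \(\delta\)-scaling and Lipschitz-Hessian hold. The value budget gives \(T\asymp\Delta\sqrt\rho\,\delta^{-3/2}\), and the Hessian bound \(\lambda\lesssim\ell\) holds precisely in the complementary regime. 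Combining the two cases, the achievable \(T\) is \(c\Delta\min\{\ell\delta^{-2},\sqrt\rho\,\delta^{-3/2}\}\). This matches the form stated in Carmon et al. and realizes that minimum with both smoothness bounds enforced simultaneously. The hypothesis \(T\ge C\) ensures the chain has nontrivial length, so rounding \(T\) to an integer costs only constants. Rescaling by \(\sigma\) and \(\lambda\) preserves the query model, since an all-derivative oracle for \(F\) is simulated by one for \(\bar F_T\) and vice versa.

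\textbf{Main obstacle.} The delicate point is that a single instance must satisfy both smoothness constraints at once. Carmon et al.\ state their bounds per smoothness order, not jointly. My first move is to check that their bounded construction has all derivative orders bounded by universal constants \(\ell_p\) simultaneously, which holds for their scaled chain. With that, the choice of \(\lambda,\sigma\) above is feasible in both regimes, and the regime boundary is exactly where the two terms of the minimum coincide. The dimension bound \(O(T^2\log T)\) and the probability \(2/3\) are inherited unchanged, because scaling does not alter the rotation argument.
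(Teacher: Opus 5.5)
Your proposal is correct and takes essentially the paper's route. The paper's proof simply cites the Carmon--Duchi--Hinder--Sidford randomized all-derivative lower bound and rescales the normalized hard family so that the Hessian bound and the Hessian-Lipschitz constant fall below \(\ell\) and \(\rho\); you carry out that rescaling explicitly via \(F(w)=\lambda\sigma^2\bar F_T(w/\sigma)\) with \(\lambda\sigma\asymp\delta\) and \(\lambda\asymp\min\{\ell,\sqrt{\rho\delta}\}\), so your two cases are just the two branches of this single choice of \(\lambda\) (and an unqueried output costs at most one extra query, which the paper also notes is permitted).
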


\begin{proof}
This is the randomized all-derivative lower theorem of
Carmon--Duchi--Hinder--Sidford, after scaling the normalized hard family
so that the gradient and Hessian Lipschitz constants are at most
\(\ell\) and \(\rho\).  The theorem permits arbitrary randomized local
algorithms and an output point that need not have been queried.
\end{proof}

For \(\kap\ge6\), set
\[
 b^2=\frac{\kap-3}{2},
 \qquad
 \delta=\frac{3\eps}{2b}.\label{eq:b-delta}
\]
Apply Lemma~\ref{lem:single-lower} with \(\ell=\mu\).  Let \(g_0=\nabla F(0)\), set \(c=-g_0/(2\mu)\), and define
\[
 f(x,y)=F(y)-\mu\norm{y-bx-c}^2.\label{eq:lift}
\]

\begin{lemma}[Validity and stationarity transfer of the lift]\label{lem:lift}
The lifted function belongs to the class with parameters \((L,\mu,\rho)\), has \(y^\star(0)=0\), and satisfies
\[
 \Phi(0)-\inf\Phi\le\Delta.\label{eq:lift-gap}
\]
For every \(x\), if \(u=bx+c\), then
\[
 \norm{\nabla F(u)}
 \le\frac{3}{2b}\norm{\nabla\Phi(x)}.\label{eq:lift-stationarity}
\]
One query to the lifted bundled oracle can be simulated by one all-derivative query to \(F\).
\end{lemma}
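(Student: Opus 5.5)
The plan is to verify the claims of Lemma~\ref{lem:lift} by direct computation. The lift \eqref{eq:lift} is quadratic in the coupling, so every derivative is explicit.

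\textbf{Class membership.} The Hessian of \(f\) in \(z=(x,y)\) is
\[
 \nabla^2 f=\begin{pmatrix}-2\mu b^2 I & 2\mu b I\\ 2\mu b I & \nabla^2F(y)-2\mu I\end{pmatrix}.
\]
Since \(\norm{\nabla^2F}\le\mu\), the \(yy\) block lies between \(-3\mu I\) and \(-\mu I\). This gives \(\mu\)-strong concavity in \(y\).

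For the operator norm, I will bound the constant quadratic part \(-2\mu\,(bI,-I)^\top(bI,-I)\), whose norm is \(2\mu(b^2+1)=\mu(\kappa-1)\). Adding \(\norm{\nabla^2F}\le\mu\) gives a total of at most \(\mu\kappa=L\). This is where the choice \(b^2=(\kappa-3)/2\) is used, and it is comfortably within budget. The Hessian varies only through \(\nabla^2F(y)\), so \(f\) is \(\rho\)-Hessian-Lipschitz jointly. Finally, \(\kappa\ge6\) ensures \(b^2\ge 3/2>0\).

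\textbf{Initialization and gap.} At \(x=0\), the stationarity condition in \(y\) is \(\nabla F(y)-2\mu(y-c)=0\). With \(c=-g_0/(2\mu)\), the point \(y=0\) satisfies it, and strong concavity gives \(y^\star(0)=0\). Then
\[
 \Phi(0)=F(0)-\mu\norm c^2\le F(0).
\]
For any \(x\), we have \(\Phi(x)\ge f(x,bx+c)=F(bx+c)\ge\inf F\). Hence \(\Phi(0)-\inf\Phi\le F(0)-\inf F\le\Delta\).

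\textbf{Stationarity transfer.} Write \(y=y^\star(x)\) and \(u=bx+c\). By Danskin's theorem,
\[
 \nabla\Phi(x)=2\mu b\,(y-u),
\]
and the inner optimality condition gives \(\nabla F(y)=2\mu(y-u)\). Therefore
\[
 \nabla F(y)=\nabla\Phi(x)/b.
\]
To pass from \(y\) to \(u\), I will use \(\norm{y-u}=\norm{\nabla\Phi(x)}/(2\mu b)\) together with the \(\mu\)-Lipschitz gradient of \(F\):
\[
 \norm{\nabla F(u)}\le\norm{\nabla F(y)}+\mu\norm{y-u}\le\frac{1}{b}\norm{\nabla\Phi(x)}+\frac{1}{2b}\norm{\nabla\Phi(x)}=\frac{3}{2b}\norm{\nabla\Phi(x)}.
\]
This is exactly \eqref{eq:lift-stationarity}.

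\textbf{Oracle simulation.} A bundled query to \(f\) at \((x,y)\) only needs \(F(y)\), \(\nabla F(y)\), and \(\nabla^2F(y)\), plus the known data \(b\), \(c\), and \(\mu\). These are obtained from one all-derivative query to \(F\) at \(y\). The constant \(c\) depends on \(g_0=\nabla F(0)\). This is either fixed instance data or obtained from a single extra query at \(0\); I expect it to be the only subtle point.

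To handle it cleanly, I would treat \(c\) as part of the instance description: the lower bound of Lemma~\ref{lem:single-lower} applies to algorithms that know the instance's public data. If that is not allowed, one extra query at \(0\) costs at most one call and is absorbed into the constant \(c\) of the lower bound.
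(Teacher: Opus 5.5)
Your proof is correct and follows the paper's argument essentially step for step. It uses the same Hessian bound $\mu+2\mu(1+b^2)=\mu\kappa=L$ (which holds with no slack, not ``comfortably''), and the same identities $\nabla\Phi(x)=b\nabla F(y^\star)$ and $y^\star-u=\nabla F(y^\star)/(2\mu)$ followed by the $\mu$-Lipschitz transfer. For the gap you prove only $\inf\Phi\ge\inf F$, which is all that the paper's equality $\inf\Phi=\inf F$ is used for.

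Your remark that the simulator must know $c=-\nabla F(0)/(2\mu)$ is a genuine point that the paper passes over with ``known quadratics.'' Resolve it with the single preliminary query at $0$, whose cost is absorbed because $T\ge C$. Do not resolve it by declaring $c$ public data: $\nabla F(0)$ depends on the hidden instance, and Lemma~\ref{lem:single-lower} says nothing about algorithms that receive it for free.
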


\begin{proof}
The \(yy\) Hessian is \(\nabla^2F-2\mu I\preceq-\mu I\).  The quadratic part has joint Hessian norm \(2\mu(1+b^2)\), so
\[
 \norm{\nabla^2f}\le\mu+2\mu(1+b^2)=\mu(3+2b^2)=L.
\]
Only \(\nabla^2F(y)\) varies, hence the Hessian Lipschitz constant is at most \(\rho\).  The choice of \(c\) makes \(y^\star(0)=0\).

For every \(x\), evaluating at \(y=bx+c\) gives \(\Phi(x)\ge\inf F\).  Conversely, for every \(y\), choosing
\[
 x=\frac{y-c-\nabla F(y)/(2\mu)}b
\]
makes \(y\) the unique inner maximizer and gives \(\Phi(x)=F(y)-\norm{\nabla F(y)}^2/(4\mu)\le F(y)\).  Hence \(\inf\Phi=\inf F\) and \eqref{eq:lift-gap} follows.

At the inner maximizer,
\[
 \nabla F(y^\star)=2\mu(y^\star-bx-c),
 \qquad
 \nabla\Phi(x)=b\nabla F(y^\star).
\]
Also \(y^\star-u=\nabla F(y^\star)/(2\mu)\).  Since \(\nabla F\) is \(\mu\)-Lipschitz,
\[
 \norm{\nabla F(u)}
 \le\frac32\norm{\nabla F(y^\star)}
 =\frac{3}{2b}\norm{\nabla\Phi(x)}.
\]
All remaining terms in the lifted oracle are known quadratics.
\end{proof}

Lemma~\ref{lem:single-lower} and \eqref{eq:lift-stationarity} imply
\begin{align}
 N
 &\ge\Omega\!\left(
 \Delta\min\left\{\mu(b/\eps)^2,
 \sqrt\rho\,(b/\eps)^{3/2}\right\}
 \right)\\
 &=\Omega\!\left(
 \Delta\min\left\{L\eps^{-2},
 \sqrt\rho\,\kap^{3/4}\eps^{-3/2}\right\}
 \right).\label{eq:all-derivative-baseline}
\end{align}
For \(1\le\kap<6\), the decoupled lift \(f(x,y)=F(x)-\mu\norm y^2/2\) gives the same bound up to a universal constant.  Thus \eqref{eq:all-derivative-baseline} is an independently established baseline: it proves the desired lower rate for \(r\le1\) and gives \(\Omega(A_1)\) for \(r\ge1\).

\section{Exactly-flat scalar edge and probabilistic tool}
\subsection{A convex exactly-flat truncated cubic}
Fix a nondecreasing \(C^\infty\) flat-core profile
\(\chi_{\rm fc}:[0,\infty)\to[0,1]\) satisfying
\[
 \chi_{\rm fc}(u)=0\quad(0\le u\le1/2),
 \qquad
 \chi_{\rm fc}(u)=1\quad(u\ge1).
\]
For \(0<\delta_{\rm fc}\le\Delta\), define, for \(t\ge0\),
\[
 p_{\delta_{\rm fc},\Delta}(t)=
 \begin{cases}
   t^2\chi_{\rm fc}(t/\delta_{\rm fc}),&0\le t\le\Delta,\\
   2\Delta t-\Delta^2,&t\ge\Delta,
 \end{cases}
 \qquad
 \widehat g_{\delta_{\rm fc},\Delta}(t)
 =\int_0^{|t|}p_{\delta_{\rm fc},\Delta}(s)\,ds.\label{eq:flat-cubic}
\]

\begin{lemma}[Flat-core scalar edge]\label{lem:flat-cubic}
There is a universal \(C_{\rm fc}\) such that \(\widehat g_{\delta_{\rm fc},\Delta}\) is even, convex, and twice continuously differentiable, and:
\begin{enumerate}
\item its value, first derivative, and second derivative vanish on \(|t|\le\delta_{\rm fc}/2\);
\item for \(t\ge0\),
\[
 0\le \widehat g'_{\delta_{\rm fc},\Delta}(t)\le t^2;\label{eq:p-upper}
\]
\item its first and second derivatives agree with those of the usual truncated cubic
\[
 g_\Delta(t)=
 \begin{cases}
 |t|^3/3,&|t|\le\Delta,\\
 \Delta t^2-\Delta^2|t|+\Delta^3/3,&|t|>\Delta
 \end{cases}
\]
whenever \(|t|\ge\delta_{\rm fc}\);
\item
\[
 0\le \widehat g''_{\delta_{\rm fc},\Delta}(t)\le C_{\rm fc}\Delta,
 \qquad
 \mathrm{Lip}(\widehat g''_{\delta_{\rm fc},\Delta})\le C_{\rm fc}.\label{eq:flat-cubic-regularity}
\]
\end{enumerate}
The values outside the core may differ from \(g_\Delta\) by an additive constant of order \(\delta_{\rm fc}^3\); this does not affect the oracle derivatives or the minimizer equations.
\end{lemma}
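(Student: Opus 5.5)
The plan is to work entirely with the derivative \(p=p_{\delta_{\rm fc},\Delta}\), since \(\widehat g(t)=\int_0^{|t|}p\). Evenness is immediate from the definition through \(|t|\).

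\textbf{Regularity of \(p\).} First check that \(p\) is \(C^1\) on \([0,\infty)\) and that \(p(0)=p'(0)=0\). On \([0,\Delta]\) we have \(p(t)=t^2\chi_{\rm fc}(t/\delta_{\rm fc})\), which is smooth. Since \(\delta_{\rm fc}\le\Delta\), near \(t=\Delta\) we get \(\chi_{\rm fc}=1\), so \(p=t^2\) there. This matches the linear branch \(2\Delta t-\Delta^2\) in value (\(\Delta^2\)) and in slope (\(2\Delta\)) at \(t=\Delta\). Hence \(p\in C^{1}\), with \(p'\) Lipschitz.

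Next, \(p\) vanishes on \([0,\delta_{\rm fc}/2]\). Therefore \(\widehat g'(t)=\mathrm{sign}(t)\,p(|t|)\) and \(\widehat g''(t)=p'(|t|)\) are continuous across \(0\), and all three of \(\widehat g,\widehat g',\widehat g''\) vanish on the core \(|t|\le\delta_{\rm fc}/2\). This gives item 1 and the \(C^2\) claim.

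\textbf{Monotonicity, convexity, and item 2.} Convexity amounts to \(p'\ge0\). On \([0,\Delta]\),
\[
p'(t)=2t\chi_{\rm fc}(t/\delta_{\rm fc})+\frac{t^2}{\delta_{\rm fc}}\chi_{\rm fc}'(t/\delta_{\rm fc})\ge0,
\]
because \(\chi_{\rm fc}\ge0\) and \(\chi_{\rm fc}\) is nondecreasing. On \(t\ge\Delta\), \(p'=2\Delta>0\). Since \(0\le\chi_{\rm fc}\le1\), we get \(0\le p\le t^2\) on \([0,\Delta]\). For \(t\ge\Delta\), \(2\Delta t-\Delta^2\le t^2\) is just \((t-\Delta)^2\ge0\). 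Together these give \eqref{eq:p-upper}.

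\textbf{Item 3.} For \(|t|\ge\delta_{\rm fc}\) we have \(\chi_{\rm fc}=1\), so \(p(t)=t^2\) on \([\delta_{\rm fc},\Delta]\). This is exactly \(g_\Delta'\) for \(t\ge0\), and beyond \(\Delta\) we have \(g_\Delta'(t)=2\Delta t-\Delta^2=p(t)\). So \(p=g_\Delta'\) on \([\delta_{\rm fc},\infty)\), and differentiating gives agreement of the second derivatives. Negative \(t\) follows by evenness. The values then differ by the constant
\[
\int_0^{\delta_{\rm fc}}\bigl(s^2-p(s)\bigr)\,ds\in[0,\delta_{\rm fc}^3/3],
\]
which is the remark after the lemma.

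\textbf{Item 4 (the main point to get right).} The estimates must be uniform in \(\delta_{\rm fc}\). The danger is the factor \(1/\delta_{\rm fc}\) in \(p'\). On the transition region \(\delta_{\rm fc}/2\le t\le\delta_{\rm fc}\), the variable is \(t\asymp\delta_{\rm fc}\), so \((t^2/\delta_{\rm fc})\chi_{\rm fc}'\le\delta_{\rm fc}\|\chi_{\rm fc}'\|_\infty\le C\Delta\). Overall this gives \(p'\le 2\Delta+\delta_{\rm fc}\|\chi_{\rm fc}'\|_\infty\le C_{\rm fc}\Delta\).

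For the Lipschitz bound on \(\widehat g''\), differentiate once more on \((0,\Delta)\):
\[
p''(t)=2\chi_{\rm fc}+\frac{4t}{\delta_{\rm fc}}\chi_{\rm fc}'+\frac{t^2}{\delta_{\rm fc}^2}\chi_{\rm fc}''.
\]
Each term is bounded by a constant depending only on \(\chi_{\rm fc}\), again because \(t\le\delta_{\rm fc}\) wherever \(\chi_{\rm fc}'\) or \(\chi_{\rm fc}''\) is nonzero. On \(t>\Delta\), \(p''=0\). Since \(p'\) is continuous at \(\Delta\) and at \(0\) (where it vanishes on both sides), \(\widehat g''\) is piecewise \(C^1\) with a uniformly bounded derivative. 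Hence \(\mathrm{Lip}(\widehat g'')\le C_{\rm fc}\), with \(C_{\rm fc}\) depending only on the fixed profile \(\chi_{\rm fc}\), which is universal.
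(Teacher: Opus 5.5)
Your proposal is correct and follows essentially the same route as the paper's proof: you work with \(p=\widehat g'\) on \([0,\infty)\), check the matching of the two branches at \(t=\Delta\), and obtain uniform bounds by differentiating \(t^2\chi_{\rm fc}(t/\delta_{\rm fc})\) on the transition region, where \(t\asymp\delta_{\rm fc}\). One small slip: when \(\delta_{\rm fc}=\Delta\), \(\chi_{\rm fc}(t/\delta_{\rm fc})\) need not equal \(1\) just to the left of \(t=\Delta\), but the \(C^1\) matching at \(\Delta\) still holds because smoothness of \(\chi_{\rm fc}\) forces \(\chi_{\rm fc}(1)=1\) and \(\chi_{\rm fc}'(1)=0\).
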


\begin{proof}
For \(t>0\), \(\widehat g'=p_{\delta_{\rm fc},\Delta}\).  The profile is nonnegative and nondecreasing because \(\chi_{\rm fc},\chi_{\rm fc}'\ge0\), so the integral is convex.  It is identically zero on \([0,\delta_{\rm fc}/2]\).  On \([\delta_{\rm fc},\Delta]\), \(p(t)=t^2\), and on \([\Delta,\infty)\) it equals the derivative of the quadratic continuation of the truncated cubic.  The pieces match in value and first derivative at \(\Delta\).  Differentiating \(t^2\chi_{\rm fc}(t/\delta_{\rm fc})\) twice on the fixed transition interval gives the bounds in \eqref{eq:flat-cubic-regularity}; outside that interval the corresponding slopes are bounded by the same universal constant.  Finally, \(p(t)\le t^2\) also for \(t\ge\Delta\), since
\(t^2-(2\Delta t-\Delta^2)=(t-\Delta)^2\).
\end{proof}

\subsection{A generic Fenchel distance-to-stationarity transfer}
\begin{lemma}[Fenchel transfer]\label{lem:fenchel-distance-transfer}
Let \(H:\mathbb R^d\to\mathbb R\) be \((\mu+c)\)-strongly convex with
minimizer \(w^\star\), let \(h(y)=H(y)-c\|y\|^2/2\), and choose
\(\beta,c>0\) and \(\lambda^2=\beta c\).  Define
\[
 f(x,y)=\frac\beta2\|x\|^2+\lambda\langle x,y\rangle-h(y)+m,
 \qquad m=\min H,
\]
and \(\Phi(x)=\max_y f(x,y)\).  If \(w=-\beta x/\lambda\) and \(y_x\) is
the inner maximizer, then
\[
 \nabla\Phi(x)=\lambda(y_x-w),\qquad
 \nabla H(y_x)=c(y_x-w),\label{eq:fenchel-identities-core}
\]
\[
 \|\nabla\Phi(x)\|
 \ge \frac{\lambda}{1+c/(\mu+c)}\|w-w^\star\|.\label{eq:fenchel-distance-core}
\]
Moreover, \(\Phi\) is \(\beta\)-strongly convex, is minimized at
\(x^\star=-\lambda w^\star/\beta\), and has minimum zero.
\end{lemma}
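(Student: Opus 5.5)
The plan is a direct computation: the inner problem has a closed-form optimality condition, and Danskin's theorem gives \(\nabla\Phi\). First I will note that \(h(y)=H(y)-c\|y\|^2/2\) is \(\mu\)-strongly convex, because \(H\) is \((\mu+c)\)-strongly convex. Hence \(f(x,\cdot)\) is \(\mu\)-strongly concave, the maximizer \(y_x\) is unique, and it is characterized by
\[
 \lambda x=\nabla h(y_x)=\nabla H(y_x)-c\,y_x .
\]
Danskin's theorem then gives \(\nabla\Phi(x)=\beta x+\lambda y_x\).

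Next I substitute \(w=-\beta x/\lambda\), so that \(\beta x=-\lambda w\). This immediately gives \(\nabla\Phi(x)=\lambda(y_x-w)\). Using \(\lambda^2=\beta c\), we have \(\lambda x=-\lambda^2 w/\beta=-cw\). The optimality condition therefore reads \(\nabla H(y_x)=\lambda x+c y_x=c(y_x-w)\). This proves \eqref{eq:fenchel-identities-core}.

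For the distance bound \eqref{eq:fenchel-distance-core}, I combine \(\nabla H(w^\star)=0\) with \((\mu+c)\)-strong convexity of \(H\). This gives \((\mu+c)\|y_x-w^\star\|\le\|\nabla H(y_x)\|=c\|y_x-w\|\). The triangle inequality then yields
\[
 \|w-w^\star\|\le\|w-y_x\|+\|y_x-w^\star\|\le\Bigl(1+\tfrac{c}{\mu+c}\Bigr)\|y_x-w\|.
\]
Multiplying by \(\lambda\) and inserting \(\|\nabla\Phi(x)\|=\lambda\|y_x-w\|\) gives the claim.

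For the final assertions, I write \(\Phi(x)=\frac\beta2\|x\|^2+h^*(\lambda x)+m\). Here \(h^*\) is convex and differentiable, since \(h\) is strongly convex. Hence \(\Phi\) is \(\beta\)-strongly convex, and its unique minimizer is the unique zero of \(\nabla\Phi\). That zero requires \(y_x=w\), and then \(\nabla H(w)=c(y_x-w)=0\), so \(w=w^\star\) and \(x^\star=-\lambda w^\star/\beta\). Conversely, at \(x^\star\) the point \(w^\star\) satisfies the optimality condition, because \(\nabla h(w^\star)=-cw^\star=\lambda x^\star\).

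It remains to evaluate \(\Phi(x^\star)=f(x^\star,w^\star)\) term by term:
\[
 \tfrac\beta2\|x^\star\|^2=\tfrac c2\|w^\star\|^2,\qquad
 \lambda\langle x^\star,w^\star\rangle=-c\|w^\star\|^2,\qquad
 -h(w^\star)+m=\tfrac c2\|w^\star\|^2 .
\]
The last identity uses \(H(w^\star)=m\). The three terms sum to zero, so \(\Phi(x^\star)=0\). There is no real obstacle here; the only care needed is the bookkeeping with \(\lambda^2=\beta c\), which is what makes the identities close exactly.
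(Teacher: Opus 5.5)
Your proof is correct and follows the paper's argument: the inner optimality equation $\lambda x=\nabla h(y_x)$ together with $\lambda x=-cw$ gives both identities, and strong convexity of $H$ plus the triangle inequality gives the distance bound. For the final claims the paper cites completing the square and the fact that a pointwise maximum of $\beta$-strongly convex functions is $\beta$-strongly convex; this is equivalent to your conjugate representation $\Phi(x)=\frac\beta2\|x\|^2+h^*(\lambda x)+m$ and your direct evaluation of $\Phi(x^\star)$.
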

\begin{proof}
The inner equation is \(\lambda x=\nabla h(y_x)\).  Since
\(\lambda x=-cw\) and \(\nabla h=\nabla H-cy\), this gives
\eqref{eq:fenchel-identities-core}.  Strong convexity of \(H\) yields
\(\|y_x-w^\star\|\le c\|y_x-w\|/(\mu+c)\); the triangle inequality and
\(\|y_x-w\|=\|\nabla\Phi(x)\|/\lambda\) give
\eqref{eq:fenchel-distance-core}.  The remaining claims follow by
completing the square and observing that a pointwise maximum of
\(\beta\)-strongly convex functions is \(\beta\)-strongly convex.
\end{proof}
\subsection{Adaptive Haar concentration}
\begin{lemma}[Adaptive Haar concentration for a virtual filtration]
\label{lem:adaptive-haar-side}
Let \(V=(v_1,\ldots,v_m)\) be a Haar-uniform orthonormal frame in
\(\R^d\), independent of a sigma-field \(\mathcal G\).  Before round \(n\),
a prefix \(v_1,\ldots,v_{k_n}\) has been disclosed and
\[
 \mathcal F_n=\sigma(\mathcal G,v_1,\ldots,v_{k_n},\mathcal U_n),
\]
where the auxiliary pre-query virtual history \(\mathcal U_n\) is
measurable with respect to \(\mathcal G\) and the disclosed columns only.
The disclosure protocol is predictable in the following strong sense:
a query is selected before any fresh column is disclosed, and after its
response the protocol may disclose the next column in full, without
conditioning disclosure on any event involving that column.  Thus no
``no-hit'' information about an undisclosed column enters
\(\mathcal F_n\).  Suppose the procedure chooses at most \(J\) candidate vectors
\(z_{n,1},\ldots,z_{n,J}\) that are \(\mathcal F_n\)-measurable and satisfy
\(\|z_{n,j}\|\le R\).  It may also produce at most \(J\) final vectors with
the same measurability and norm bound.  Then
\[
 \Pr\left\{
  \max_{n,j}\max_{i>k_n}|\langle v_i,z_{n,j}\rangle|>s
 \right\}
 \le 2J(N+1)m
 \exp\!\left(-c(d-m)\frac{s^2}{R^2}\right).
\]
In particular, the probability is at most \(\delta_{\rm prob}\) if
\[
 d-m\ge C\frac{R^2}{s^2}
 \log\!\left(\frac{2J(N+1)m}{\delta_{\rm prob}}\right).
\]
\end{lemma}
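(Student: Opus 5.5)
We reduce the adaptive statement to a union bound over \(J(N+1)m\) non-adaptive events, each handled by conditional Haar concentration. The rounds are \(n=1,\dots,N\), with an extra index \(n=N+1\) for the final vectors. For a fixed round \(n\), candidate index \(j\), and column index \(i\), we bound
\[
 \Pr\bigl\{i>k_n,\ |\langle v_i,z_{n,j}\rangle|>s\bigr\}.
\]
A union bound over these \(J(N+1)m\) triples then gives the prefactor.

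\textbf{Step 1: conditional law of undisclosed columns.} Since \(V\) is Haar and independent of \(\mathcal G\), the conditional law of \((v_{k+1},\dots,v_m)\) given \(\mathcal G\) and \(v_1,\dots,v_k\) is Haar on orthonormal frames in the orthogonal complement \(W_k\) of \(\mathrm{span}(v_1,\dots,v_k)\). In particular, each \(v_i\) with \(i>k\) is uniform on the unit sphere of \(W_k\), which has dimension \(d-k\ge d-m\). The hypothesis that \(\mathcal U_n\) is measurable with respect to \(\mathcal G\) and the disclosed columns is what keeps this law unchanged after adding \(\mathcal U_n\). The predictability of the disclosure rule, that the next column is revealed in full without conditioning on any event about it, lets us handle the random index \(k_n\). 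We decompose over \(\{k_n=k\}\). Since \(k_n\) is a stopping-time-like quantity determined by \(\mathcal G\), the first \(k\) columns, and virtual history, the event \(\{k_n=k\}\) belongs to \(\sigma(\mathcal G,v_1,\dots,v_k)\). This is the step I expect to be the main obstacle: we must verify carefully that \(\{k_n=k\}\) together with \(\mathcal U_n\) is measurable in this smaller sigma-field, so that conditioning on it does not tilt the law of the undisclosed columns. That is exactly where any ``no-hit'' conditioning would break the argument. I would establish it by induction on \(n\), using that the query at round \(n\) is chosen before fresh disclosure and that disclosure decisions depend only on previously disclosed data.

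\textbf{Step 2: sphere concentration.} On \(\{k_n=k\}\), the vector \(z=z_{n,j}\) is \(\sigma(\mathcal G,v_1,\dots,v_k)\)-measurable with \(\|z\|\le R\). Write \(P\) for the orthogonal projection onto \(W_k\). Then \(\langle v_i,z\rangle=\langle v_i,Pz\rangle\) and \(\|Pz\|\le R\). Standard concentration for a uniform point on \(S^{d-k-1}\) gives
\[
 \Pr\bigl(|\langle v_i,Pz\rangle|>s \,\big|\, \cdot\bigr)\le 2\exp\bigl(-c(d-k)s^2/R^2\bigr)\le 2\exp\bigl(-c(d-m)s^2/R^2\bigr).
\]
Integrating over the conditioning and summing over \(k\), which amounts to summing the disjoint events \(\{k_n=k\}\), gives the per-triple bound.

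\textbf{Step 3: union bound and threshold.} The final vectors are treated as round \(N+1\) with the final disclosed prefix. Summing over all triples gives the factor \(2J(N+1)m\). Solving \(2J(N+1)m\exp\bigl(-c(d-m)s^2/R^2\bigr)\le\delta_{\rm prob}\) for \(d-m\) yields the stated dimension condition with \(C=1/c\).
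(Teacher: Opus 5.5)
Your proposal is correct and follows essentially the paper's argument. You condition on the pre-query information, use that undisclosed columns remain Haar on the orthogonal complement of the disclosed span, apply the spherical tail bound, and union-bound over the $J(N+1)m$ round--candidate--column triples. Your explicit decomposition over $\{k_n=k\}$, with $\{k_n=k\}\in\sigma(\mathcal G,v_1,\dots,v_k)$, is a more careful version of the paper's one-line remark that random activation times are harmless because the filtration never conditions on an undisclosed column.
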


\begin{proof}
Condition on \(\mathcal F_n\).  By the predictable disclosure rule,
no variable in \(\mathcal F_n\) depends on an undisclosed column except
through the already disclosed span; in particular, the history contains
no negative information of the form ``the next column was not hit.''
The remaining columns are therefore Haar-uniform in the orthogonal
complement of that span, whose dimension is at least \(d-m+1\).  For every fixed \(\mathcal F_n\)-measurable vector of norm at
most \(R\), the spherical tail bound gives
\[
 \Pr\{|\langle v_i,z\rangle|>s\mid\mathcal F_n\}
 \le2\exp(-c(d-m)s^2/R^2).
\]
Average over the past and take a union bound over candidate--column
pairs.  Random activation times are harmless because the pre-query
filtration is generated only by the disclosed prefix and the predictable
virtual history; it never conditions on an event involving an
undisclosed column.
\end{proof}

\begin{remark}
The measurability restriction on \(\mathcal U_n\) is essential.  A lemma
that merely permits arbitrary ``preceding responses'' is false, because
such responses may already encode unrevealed columns.  The
packet-suppressed virtual transcripts below are constructed precisely so
that the corrected hypothesis holds.
\end{remark}

\section{Private-handle lower bound for \texorpdfstring{\(r\ge1\)}{r >= 1}}
\label{sec:private-handle}
We use a private handle so that the first selected path direction of a
future packet is absent from every inactive value--gradient--Hessian
response.  A public orientation-free quadratic amplifier maps that
handle to the first path direction only after activation.  The same
amplifier reduces the physical seed to \(\Theta(\mu D)\) while preserving
an \(\Omega(D)\) hidden tail, which is the scale needed for the relay
Schur-complement argument.

Throughout this section all universal constants are selected in the order
in which they appear.  Let \(q_0\ge1\) be the universal positive-integer
minimum packet length required by the serial product theorem below.  If
\[
 c_q\min\{r^{2/7},\sqrt\kap\}<q_0,
\]
then the target multiplier is bounded by a universal constant, and the
independent baseline \eqref{eq:all-derivative-baseline} already gives the
required order.  The private-handle construction is used only in the
complementary, nondegenerate regime, where its packet length satisfies
\(q\ge q_0\).

In this remaining regime, \(\kap\) is larger than a universal constant.  We
use a standard constant-slack convention and carry out the construction with
internal budgets
\[
 L_\circ=c_L L,\qquad \mu_\circ=C_\mu\mu,
 \qquad \kap_\circ=L_\circ/\mu_\circ\asymp\kap,\label{eq:slack-budgets}
\]
where \(c_L>0\) is sufficiently small and \(C_\mu>1\) is sufficiently large.
All Hessian norms then fit below \(L\), while the final dual strong-concavity
modulus is at least \(\mu\).  Replacing \((L,\mu,\kap,r)\) by the internal
quantities changes every displayed rate only by universal constants.  We
suppress the circles below.

\begin{lemma}[Feasible universal constant hierarchy]
\label{lem:private-constant-hierarchy}
All constants in the private-handle packet, paired shell, Fenchel lift,
and dual relay can be fixed once and for all, independently of
\((L,\mu,\rho,\Delta,\eps,\kap,r,T,q)\), so that every displayed class,
curvature, gate, and gap inequality used below holds simultaneously.
One admissible order of choice is
\[
 C_\omega,C_A,C_R,C_{\rm col},K_0,C_h,C_a,K_D,c_\sigma,C_\mu
 \quad\text{first},
 \qquad
 c_L,c_g,c_q,c_T
 \quad\text{afterwards}.\label{eq:constant-choice-order}
\]
More precisely, after the profile functions are fixed, the constants may
be chosen so that
\[
 \begin{array}{c|c}
 \text{requirement}&\text{reserved budget}\\
 \hline
 \text{packet and Fenchel Hessian norms}&L/16\ \text{each}\\
 \text{relay quadratic joint Hessian}&L/8\\
 \text{all gate-product Hessian terms}&L/8\\
 \text{packet and shell Hessian-Lipschitz terms}&\rho/8\ \text{each}\\
 \text{all gate-product third-derivative terms}&\rho/4\\
 \text{dual strong-concavity remainder}&2\mu\\
 \text{initial-gap and interpolation shifts}&\Delta/2.
 \end{array}\label{eq:constant-budget-table}
\]
Consequently the final family is jointly \(L\)-smooth, has
\(\rho\)-Lipschitz Hessian, is \(\mu\)-strongly concave in all dual
variables, and has initial value gap at most \(\Delta\).
\end{lemma}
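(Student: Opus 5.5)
The plan is a dependency-ordered bookkeeping argument. The constructions appearing later (private-handle packet, paired shell, Fenchel lift, dual relay) are built from fixed $C^\infty$ profile functions: the flat-core profile $\chi_{\rm fc}$, the gate profiles, and the amplifier. I would first record, for each displayed inequality, which constants it involves and in which direction it is monotone. Every requirement should have one of two forms. Either it is a lower bound on a "structural" constant in terms of earlier structural constants only (for example $C_\mu$ large relative to $K_D,c_\sigma$), or it is an upper bound on a "scale" constant $c_L,c_g,c_q,c_T$ by an expression in the structural constants only. The key observation is homogeneity. After the slack normalization \eqref{eq:slack-budgets}, every Hessian norm, Hessian-Lipschitz modulus, strong-concavity remainder and value gap is a product of a structural constant, a power of a scale constant, and the corresponding target scale $L$, $\rho$, $\mu$ or $\Delta$. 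The problem parameters $(L,\mu,\rho,\Delta,\eps,\kap,r,T,q)$ therefore cancel.

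Second, I would verify each row of the budget table \eqref{eq:constant-budget-table} in turn. The packet and Fenchel Hessian norms are bounded via Lemma~\ref{lem:flat-cubic}(4) and the explicit quadratic in Lemma~\ref{lem:fenchel-distance-transfer}, giving $C\cdot c_L L$. Shrinking $c_L$ puts each below $L/16$. The relay quadratic contributes $2\mu(1+b^2)$-type terms as in Lemma~\ref{lem:lift}, reserved $L/8$. The gate-product Hessian and third-derivative terms come from Leibniz expansions of products of gates with packet pieces. These sums have a fixed number of terms, each bounded by profile sup-norms times powers of $c_g$, so $c_g$ small fits $L/8$ and $\rho/4$. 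The Hessian-Lipschitz terms of the packet and shell scale like $C_{\rm fc}$ times $\rho$ times a power of $c_q$, since $q^{7/2}\asymp r$ is exactly the relation making $\rho$ the right unit. Finally the initial gap is a sum of $T$ packet gaps of order $c_T\Delta/T$ plus interpolation shifts of order $\delta_{\rm fc}^3$, which $c_T$ controls below $\Delta/2$.

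Third, I would treat the dual strong-concavity row. Here the Schur complement over the relay must leave a modulus at least $\mu$ after subtracting a remainder of at most $2\mu$. This is why $C_\mu>1$ is chosen among the first group: take $\mu_\circ=C_\mu\mu$ with $C_\mu\ge3$ plus the $K_D,c_\sigma$-dependent slack. This choice must also be consistent with $\kap_\circ\asymp\kap$, which holds because $C_\mu$ and $c_L$ are universal. Summing the rows then gives joint $L$-smoothness, a $\rho$-Lipschitz Hessian, $\mu$-strong concavity and gap $\le\Delta$.

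The main obstacle is confirming that no requirement is circular. The risk is that some gate or Schur bound needs a structural constant (for example $K_0$ or $C_a$) large in terms of a scale constant like $c_q$, which would break the stated order \eqref{eq:constant-choice-order}. I would address this by writing the dependency graph explicitly and checking that every edge points from the first group to the second. Where a requirement mixes the groups, I would check that it is monotone in the favorable direction: lowering a scale constant only helps.
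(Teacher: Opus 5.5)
Your overall architecture matches the paper's: fix the profiles, choose the large structural constants first and the small fractions afterwards, then check each row of \eqref{eq:constant-budget-table}. However, several of your row checks would not go through, and you miss the one constraint that is genuinely two-sided, which is what the paper's proof is built around.

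The relay quadratic is $-\frac{\sigma_{\rm rel}}2\sum_t\norm{s_t-a_{\rm rel}x_t}^2$. It is not a $2\mu(1+b^2)$ term from the all-derivative lift of Lemma~\ref{lem:lift}, which plays no role here. Its coefficient is squeezed from both sides. It must dominate the gate and reward curvature by a factor four and supply the upper-left block $\succeq2\mu I$ used in the Young step \eqref{eq:handle-young}, so $\sigma_{\rm rel}\ge4\mu$. Yet its joint Hessian $\sigma_{\rm rel}(1+a_{\rm rel}^2)$ must fit in $L/8$, and $a_{\rm rel}^2\approx C_a^2\kap$ when $r$ is near $1$. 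Measured in the target $(L,\mu)$ these are incompatible, since they would force $4\mu\le\sigma_{\rm rel}\le\mu/(8C_a^2)$. This is why the slack \eqref{eq:slack-budgets} exists. The paper fixes $c_\sigma$ first and measures $a_{\rm rel}$ in internal units, so that $a_{\rm rel}^2\le C_a^2\kap_\circ$. Only then does it take $c_L\le c/c_\sigma$ and $C_\mu$ large relative to $C_a^2/c_\sigma$. This gives $\sigma_{\rm rel}=c_\sigma L_\circ/a_{\rm rel}^2\ge(c_\sigma C_\mu/C_a^2)\mu\ge4\mu$ and $\sigma_{\rm rel}(1+a_{\rm rel}^2)\le2c_\sigma c_LL\le L/8$. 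Your third step takes $C_\mu\ge3$ plus unspecified slack and never confronts this conflict.

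Two more steps fail as written.

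First, $c_g$ does not control the gate-product rows. Under the alignment $D_q=K_D\eps\sqrt\kap/L$, \eqref{eq:seed-ratio} gives $s_q=2C_\omega\nu D_q$, and $E_q=s_qA_q$. So $\Delta f_t$, the gates and the reward are all independent of $c_g$. Their second-derivative groups and the third-derivative ratios \eqref{eq:handle-relay-ratios} are controlled by $C_a$ (through the maximum defining $a_{\rm rel}$), $K_D$, $r\ge1$ and $c_L$. Shrinking $c_g$ does nothing there.

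Second, the homogeneity and monotonicity you rely on are false.
\begin{itemize}
\item The ratios \eqref{eq:handle-relay-ratios} do not cancel in $(\kap,r)$; they are merely bounded, via $r\ge1$ and the case split at $r=\kap^{3/2}$.
\item The rows $\nu q^2\le L/16$ and $\rho_q\le\rho/8$ rest on the inequalities $q^2\le c_q^2\kap_\circ$ and $q^{7/2}\le c_q^{7/2}r$.
\item The $O(\mu_\circ)$ gate-Hessian term fits under $L/8$ only because $\kap$ is large in the nondegenerate regime.
\item At fixed $D_q$ one has $\rho_q=C_\omega q^{7/2}\nu/(c_gD_q)$, which grows as $c_g$ decreases. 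So lowering a small constant can hurt: $c_q$ must be chosen after $c_g$, and also after $c_L$ and $C_\mu$ so that $\kap_\circ\ge1$ there.
\end{itemize}
Hence the small constants need their own order $c_L,c_g,c_q,c_T$, contrary to your claim that every dependency edge runs from the first group to the second.

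Finally, the gap shifts are $O(E)$ per packet (from the interpolated local values and the $K_0E$ rewards), not $O(\delta_{\rm fc}^3)$. So $c_T$ must be small relative to $K_0$ and $C_B$.
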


\begin{proof}
Fix first all scalar profiles; their derivative norms are now numerical
constants.  Choose \(C_\omega\) large enough for the bulk-tail estimate,
then \(C_A\) large enough that the bounded seed uses at most one eighth of
the packet strong-convexity margin.  Choose \(C_R\) so that the active
minimizer and every chamber used in the information proof lie strictly
inside the identity region of the common shell.  Next choose
\(C_{\rm col}\) to dominate, with a factor four, the operator norm of all
cutoff-product negative Hessian terms on the shell annulus.  Choose
\(K_0\) so that the interpolation coefficient satisfies
\(-C_BE\le \Delta f-K_0E\le-E\), and then choose \(C_h\) larger than the
universal mixed-block Schur-complement constant in
\eqref{eq:handle-young}.  Finally choose \(C_a\) and \(K_D\) so that the
relay has the required geometric width, the stationarity force has a
fixed margin above \(\eps\), and the packet minimizer remains in every
linear core.

Choose next the universal relay coefficient \(c_\sigma\) large enough
that, in the internal scale, the positive \(s\)-curvature of the relay
quadratic dominates by a factor four the largest possible negative
curvature contributed by the radial gate and the progress reward.  Only
a universal numerical lower bound on \(c_\sigma\) is required.  After all
large constants, including \(c_\sigma\), have been fixed, choose the
internal smoothness fraction \(c_L\) sufficiently small and the internal
curvature multiplier \(C_\mu\) sufficiently large in
\eqref{eq:slack-budgets}.  These choices can be made simultaneously so
that
\[
 4\mu\le \sigma_{\rm rel}=c_\sigma L_\circ/a_{\rm rel}^2
 \quad\text{and}\quad
 \sigma_{\rm rel}(1+a_{\rm rel}^2)\le L/8.
\]
Indeed \(1\le a_{\rm rel}^2\le C\kap_\circ\), so the first inequality
follows once the internal strong-concavity modulus is a sufficiently
large universal multiple of the target \(\mu\), whereas the second follows
from \(c_L\le c/c_\sigma\).  Thus the relay coefficient may be large on
the internal scale while its entire joint Hessian still occupies only
the reserved external \(L/8\) budget.  Choose \(c_g\) small enough that the
packet's nonlinear edge Hessian and third derivative occupy their
assigned fractions in \eqref{eq:constant-budget-table}.  Choose \(c_q\) small enough that
\(q^2\le c\kap_\circ\) and \(q^{7/2}\le cr\) force respectively
\(\nu q^2\le L/16\) and \(\rho_q\le\rho/8\).  With these choices the four
third-derivative ratios in \eqref{eq:handle-relay-ratios} are bounded by
the reserved \(\rho/4\) budget.  The lower-right dual Hessian retains the
margin \(C_h\mu\), while Young's inequality consumes less than
\((C_h-2)\mu\); together with the relay upper-left block this leaves at
least \(\mu I\).  Last choose \(c_T\) so that
\(CTE\le\Delta/2\) and all one-time initialization shifts fit in the
remaining half of the gap budget.  Every choice only tightens a later
small constant and no step asks a previously fixed large constant to be
changed.  This proves simultaneous feasibility.
\end{proof}

\subsection{A public amplifier with a private handle}
Let \(\mathsf A,\mathsf B,\mathsf W\) be mutually orthogonal Euclidean
spaces, with \(\dim\mathsf A=\dim\mathsf B\), and let
\(C:\mathsf A\to\mathsf B\) be a fixed public isometry.  Select a hidden
unit handle \(h\in\mathsf A\), put
\[
 v_1:=Ch\in\mathsf B,\label{eq:v1-handle}
\]
and choose hidden orthonormal vectors
\(v_2,\ldots,v_m\in\mathsf W\).  The packet variable is
\(y=(a,b,u)\in\mathsf A\oplus\mathsf B\oplus\mathsf W\) and we write
\(w=b+u\).

Define the public quadratic form
\[
 Q_q(a,b,u)
 :=\frac\nu2\left(
 2\norm a^2+(q+1)\norm b^2-2\sqrt q\,\ip{Ca}{b}+\norm u^2
 \right),\label{eq:public-amplifier}
\]
where \(\nu\asymp\mu\) is a sufficiently large universal multiple of the
required strong-convexity scale.

\begin{lemma}[Spectrum and no-tail Krylov leakage]
\label{lem:amplifier-spectrum}
The Hessian of \(Q_q\) has spectrum contained in \([\nu,\nu(q+2)]\).  More
precisely, on every paired two-dimensional space generated by
\((e,0,0)\) and \((0,Ce,0)\) its matrix divided by \(\nu\) is
\[
 \begin{bmatrix}2&-\sqrt q\\-\sqrt q&q+1\end{bmatrix},\label{eq:amplifier-matrix}
\]
whose eigenvalues are \(1\) and \(q+2\).  Moreover, after \(h\) is known, the
Krylov space generated by the public Hessian from \((h,0,0)\) is exactly
contained in
\[
 \mathrm{span}\{(h,0,0),(0,v_1,0)\};\label{eq:amplifier-krylov}
\]
it contains none of \(v_2,\ldots,v_m\).
\end{lemma}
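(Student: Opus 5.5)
The plan is to block-diagonalize the Hessian of \(Q_q\) into the paired planes and then read the Krylov statement off that block structure. Since \(Q_q\) is a quadratic form, its Hessian \(\nabla^2Q_q\) is the constant symmetric operator
\[
 \nabla^2Q_q(a,b,u)[(\alpha,\beta,\gamma)]
 =\nu\bigl(2\alpha-\sqrt q\,C^{*}\beta,\ (q+1)\beta-\sqrt q\,C\alpha,\ \gamma\bigr),
\]
where \(C^{*}\) is the adjoint of \(C\). Because \(C\) is an isometry between spaces of equal dimension, it is unitary, so \(C^{*}C=I_{\mathsf A}\) and \(CC^{*}=I_{\mathsf B}\).

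Fix an orthonormal basis \(e_1,\dots,e_k\) of \(\mathsf A\). Then \(Ce_1,\dots,Ce_k\) is an orthonormal basis of \(\mathsf B\). The ambient space decomposes orthogonally as
\[
 \bigoplus_i \mathrm{span}\{(e_i,0,0),(0,Ce_i,0)\}\ \oplus\ (0,0,\mathsf W).
\]
The key step is to check that each summand is invariant under \(\nabla^2Q_q\). Applying the formula to \((e,0,0)\) gives \(\nu(2e,-\sqrt q\,Ce,0)\). Applying it to \((0,Ce,0)\) gives \(\nu(-\sqrt q\,C^{*}Ce,(q+1)Ce,0)=\nu(-\sqrt q\,e,(q+1)Ce,0)\). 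On \(\mathsf W\) the operator acts as \(\nu I\).

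So in each paired plane the matrix divided by \(\nu\) is exactly \eqref{eq:amplifier-matrix}. That matrix has trace \(q+3\) and determinant \(2(q+1)-q=q+2\), so its eigenvalues are \(1\) and \(q+2\). Together with the eigenvalue \(1\) on \(\mathsf W\), the spectrum of \(\nabla^2Q_q\) is contained in \(\{\nu,\nu(q+2)\}\subset[\nu,\nu(q+2)]\).

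For the Krylov claim, take the particular unit vector \(e=h\). Since \(\mathsf A\) and \(\mathsf B\) have the same dimension, the pairing of \(e\) with \(Ce\) is valid for every unit \(e\), not only for basis vectors. The computation above then shows that
\[
 P_h:=\mathrm{span}\{(h,0,0),(0,Ch,0)\}=\mathrm{span}\{(h,0,0),(0,v_1,0)\}
\]
is invariant under \(\nabla^2Q_q\), using \(v_1=Ch\) from \eqref{eq:v1-handle}. Hence every iterate \((\nabla^2Q_q)^j(h,0,0)\) lies in \(P_h\), and so does the whole Krylov space, which gives \eqref{eq:amplifier-krylov}.

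Finally, \(P_h\subset\mathsf A\oplus\mathsf B\), which is orthogonal to \(\mathsf W\). Each \((0,0,v_i)\) with \(i\ge2\) is a unit vector orthogonal to \(P_h\), so none of \(v_2,\dots,v_m\) lies in, or has any component in, the Krylov space.

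No step is a real obstacle. The only point needing care is the cross term: the gradient of \(\ip{Ca}{b}\) with respect to \(a\) is \(C^{*}b\), and the identity \(C^{*}Ch=h\) is what keeps the \(\mathsf A\)-component inside \(\mathrm{span}\{h\}\). This is exactly where the hypothesis that \(C\) is a public isometry with \(\dim\mathsf A=\dim\mathsf B\) is used.
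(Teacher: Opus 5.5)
Your proposal is correct and follows essentially the same route as the paper. Both arguments decompose the Hessian into the paired planes, where the characteristic polynomial is \((\lambda-1)(\lambda-q-2)\), plus \(\nu I\) on \(\mathsf W\), and both deduce the Krylov claim from the invariance of \(\mathrm{span}\{(h,0,0),(0,v_1,0)\}\) and its orthogonality to \(\mathsf W\). You make explicit the operator formula and the use of \(C^{*}C=I\), which the paper leaves implicit.
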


\begin{proof}
The characteristic polynomial of \eqref{eq:amplifier-matrix} is
\((\lambda-1)(\lambda-q-2)\).  On \(\mathsf W\) the Hessian is \(\nu I\).
The public Hessian maps the two vectors in
\eqref{eq:amplifier-krylov} into their span and has no block connecting
that span to \(\mathsf W\), proving the last assertion.
\end{proof}

The importance of Lemma~\ref{lem:amplifier-spectrum} is that the full
quadratic carrier is public but does not choose \(h\).  Before the active
seed appears, neither the value, gradient, nor Hessian of
\eqref{eq:public-amplifier} contains the selected handle.  After the seed
reveals \(h\), the public map gives \(v_1=Ch\), but no tail direction.

\subsection{A low-seed exactly-flat distance packet}
Fix a packet Hessian-Lipschitz budget \(\rho_q>0\).  Use the scalar edge
\(\widehat g_{\delta_q,\omega}\) from
Lemma~\ref{lem:flat-cubic}.  Choose a sufficiently small universal
\(c_g>0\) and put
\[
 \bar\lambda:=\frac\nu{c_g\rho_q},\qquad
 \omega:=C_\omega q^2\bar\lambda,\qquad
 \delta_q:=c_\delta\omega,\label{eq:handle-scales}
\]
where \(C_\omega\) is large and \(c_\delta\) is small.  Let
\[
 \Gamma:=\omega^2,\qquad
 \eta:=\frac{2\Gamma}{\sqrt q},\qquad
 m:=\lceil C_mq\rceil,\label{eq:handle-seed}
\]
with \(C_m\) fixed before \(C_\omega\).  Define
\[
 \boxed{
 \begin{aligned}
 \mathcal H_{\mathcal V}^{\rm lin}(a,b,u)
 :=c_g\rho_q\Bigg[&
 \sum_{i=1}^{m-1}
 \widehat g_{\delta_q,\omega}
  \bigl(\ip{v_i-v_{i+1}}{b+u}\bigr)\\
 &+\frac{\bar\lambda}{2}
 \left(2\norm a^2+(q+1)\norm b^2
 -2\sqrt q\,\ip{Ca}{b}+\norm u^2\right)
 -\eta\ip h a\Bigg].
 \end{aligned}}\label{eq:handle-packet}
\]
The tuple \(\mathcal V=(h,v_2,\ldots,v_m)\) is private, while \(C\) is public.
The seedless function, obtained by deleting the last term, is denoted
\(\mathcal H_{\mathcal V}^{0,\rm lin}\).

\begin{theorem}[Private-handle low-seed packet]
\label{thm:private-handle-packet}
After the universal constants above are fixed, the following assertions
hold.  Define
\[
 D_q:=q^{3/2}\omega
 \asymp\frac{\nu q^{7/2}}{\rho_q}.\label{eq:handle-Dq}
\]
\begin{enumerate}[label=(\roman*)]
\item The packet is globally \(\nu\)-strongly convex,
\[
 \mathrm{Lip}(\nabla\mathcal H_{\mathcal V}^{\rm lin})
 \le C\nu q^2,
 \qquad
 \mathrm{Lip}(\nabla^2\mathcal H_{\mathcal V}^{\rm lin})
 \le\rho_q.\label{eq:handle-regularity}
\]
\item Its unique minimizer
\(y_{\mathcal V}^\star=(a^\star,b^\star,u^\star)\) obeys
\[
 \norm{y_{\mathcal V}^\star}\le CD_q,
 \qquad
 \left\|\mathrm{Proj}_{\mathrm{span}
 \{v_2,\ldots,v_{\lfloor c_0q\rfloor}\}}
 (b^\star+u^\star)\right\|\ge cD_q,\label{eq:handle-bulk}
\]
and
\[
 \mathcal H_{\mathcal V}^{\rm lin}(0)
 -\mathcal H_{\mathcal V}^{\rm lin}(y_{\mathcal V}^\star)
 \le C\nu D_q^2.\label{eq:handle-gap}
\]
\item The physical handle-seed magnitude
\[
 s_q:=c_g\rho_q\eta\label{eq:handle-physical-seed}
\]
satisfies
\[
 c\nu D_q\le s_q\le C\nu D_q.\label{eq:low-seed}
\]
Thus the seed is smaller by a factor \(\sqrt q\) than in the packet of
the scalar-edge construction above, while the bulk hidden distance
and the gap scale are unchanged.
\item Give \(h\) and hence \(v_1=Ch\) to the algorithm for free.  Against
any deterministic exact second-order algorithm using fewer than \(cq\)
responses, a resisting choice of \(v_2,\ldots,v_m\) leaves output distance
at least \(cD_q\) from \(y_{\mathcal V}^\star\).  Before activation, when
\(h\) is not given, the seedless localized packet has a public complete
transcript whenever all hidden edge arguments lie in their flat cores.
The randomized fixed-instance statement is proved later by the explicit
virtual-filtration product argument.
\end{enumerate}
\end{theorem}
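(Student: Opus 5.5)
The plan is to exploit the normalization \(c_g\rho_q\bar\lambda=\nu\). With it, the quadratic part of \eqref{eq:handle-packet} is exactly the public amplifier \(Q_q\) of \eqref{eq:public-amplifier}, and the packet reads
\[
 \mathcal H_{\mathcal V}^{\rm lin}=c_g\rho_q\sum_{i<m}\widehat g_{\delta_q,\omega}\bigl(\ip{v_i-v_{i+1}}{w}\bigr)+Q_q-s_q\ip ha .
\]

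\textbf{Step (i): regularity.} Global \(\nu\)-strong convexity follows from Lemma~\ref{lem:amplifier-spectrum}, since the spectrum of \(\nabla^2Q_q\) lies in \([\nu,\nu(q+2)]\), together with convexity of each edge (Lemma~\ref{lem:flat-cubic}). For the gradient bound, use \(\widehat g''\le C_{\rm fc}\omega\) and the chain-Laplacian estimate \(\sum_i\ip{v_i-v_{i+1}}d^2\le4\norm d^2\). This gives an edge Hessian of size at most \(4C_{\rm fc}c_g\rho_q\omega=4C_{\rm fc}C_\omega\nu q^2\); adding \(\nu(q+2)\) proves the first bound in \eqref{eq:handle-regularity}. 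For the third-derivative bound, use \(\mathrm{Lip}(\widehat g'')\le C_{\rm fc}\) and
\[
 \sum_i|\ip{v_i-v_{i+1}}d|^3\le\Bigl(\sum_i\ip{v_i-v_{i+1}}d^2\Bigr)^{3/2}\le8 .
\]
This yields \(\mathrm{Lip}(\nabla^2\mathcal H)\le8C_{\rm fc}c_g\rho_q\le\rho_q\) once \(c_g\) is small.

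\textbf{Step (iii), then (ii): seed and minimizer.} The seed identity is a direct computation:
\[
 s_q=c_g\rho_q\eta=2c_g\rho_q\omega^2/\sqrt q=2C_\omega\nu q^{3/2}\omega=2C_\omega\nu D_q,
\]
which gives \eqref{eq:low-seed}. For (ii), first eliminate \(a\) exactly. Since \(a\) couples only to \(b\) through \(C\), minimizing over \(a\) leaves a quadratic in \(b\) with coefficient \(\asymp\nu q\), together with a linear force along \(v_1\) of magnitude \(\tfrac{\sqrt q}{2}s_q=c_g\rho_q\omega^2=:F\).

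The reduced problem in \(w\) is a Carmon-type chain:
\begin{itemize}
\item a force \(F\) pushes \(w\) along \(v_1\);
\item the edge slopes obey the balance \(c_g\rho_q\widehat g'(t_i)=F_i\), where \(F_i\) is the force transmitted through edge \(i\);
\item the linear continuation \(\widehat g'(t)=2\omega t-\omega^2\) for \(t\ge\omega\) forces \(t_i\asymp\omega\) whenever \(F_i\asymp F\).
\end{itemize}
The drain of force caused by the \(\nu\)- and \(\nu q\)-quadratics along the first \(m\asymp q\) links is at most \(C\nu q^2\omega\). This is a small fraction of \(F=C_\omega\nu q^2\omega\) once \(C_\omega\) is large relative to \(C_m\). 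This is the ``bulk-tail estimate'' referenced in Lemma~\ref{lem:private-constant-hierarchy}.

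The bulk-tail estimate has two consequences. First, the consecutive differences of coordinates of \(w^\star\) are \(\asymp\omega\) along the first \(c_0q\) links. Second, the coordinates themselves satisfy \(|\ip{v_j}{w^\star}|\asymp(c_0q-j)\omega\) on the bulk. Summing squares gives projection mass \(\asymp q^{3/2}\omega=D_q\) onto \(\mathrm{span}\{v_2,\dots,v_{\lfloor c_0q\rfloor}\}\), and the same argument gives \(\norm{y^\star}\le CD_q\). The gap \eqref{eq:handle-gap} then follows from strong convexity, because \(\nabla\mathcal H(0)=-s_q(h,0,0)\) (the flat cores kill the edge gradients at \(0\)):
\[
 \mathcal H(0)-\mathcal H(y^\star)\le\frac{s_q^2}{2\nu}\le C\nu D_q^2 .
\]

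\textbf{Step (iv): resisting oracle.} Give \(h\) and \(v_1\) to the algorithm. Maintain the invariant that after \(n\) responses the transcript depends only on \(v_1,\dots,v_{n+1}\). A query \(y\) produces edge arguments \(\ip{v_i-v_{i+1}}w\) for \(i>n+1\) that, by choosing the remaining frame in the orthogonal complement of the queried span, lie in the flat cores \(|t|\le\delta_q/2\). On those cores the value and all derivatives of the edge vanish, and Lemma~\ref{lem:amplifier-spectrum} shows that public Hessian algebra generates no tail direction. Hence each response reveals at most one new \(v_i\). After fewer than \(cq\) responses, the output can be made orthogonal to \(v_j\) for \(j\ge cq\), while \(y^\star\) retains mass \(\ge cD_q\) on \(v_{cq},\dots,v_{c_0q}\) for \(c\ll c_0\). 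The pre-activation claim is the same observation applied to the seedless packet: with no handle term, every hidden edge argument in the flat core gives a response computable from \(Q_q\) alone.

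\textbf{Main obstacle.} I expect the quantitative bulk-tail estimate in (ii) to be the hardest step. The force balance must hold uniformly along \(\Theta(q)\) links despite the quadratic drains, and the tail must be kept inside the linear region \(t\ge\delta_q\) of the edges. I would first try an explicit comparison solution with \(t_i=\omega(1+\theta_i)\) and bound the drift \(\theta_i\) by a discrete maximum principle.
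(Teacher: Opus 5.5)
The gap is in the bulk estimate of (ii), and the mechanism you propose for it is false. Your Steps (i) and (iii) are sound. So is the gap bound via $\nabla\mathcal H(0)=-s_q(h,0,0)$ and strong convexity, which is a valid alternative to the paper's bound $s_q|\alpha^\star|$. The resisting-frame argument in (iv) is also sound.

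\textbf{Why the drain argument fails.} In the reduced scalar chain (divide by $c_g\rho_q$, eliminate $a$), stationarity gives the exact balance $\Gamma=Sz_1+\bar\lambda\sum_{j\ge2}z_j$ with $S=\bar\lambda(q+2)/2$.
\begin{itemize}
\item The chain has a free end, so the node springs absorb the \emph{entire} force. The drain over the $m$ links is $F$ itself, not a small fraction of it.
\item Your bound ``drain $\le C\nu q^2\omega$'' tacitly assumes $z_j\lesssim q\omega$ with a constant independent of $C_\omega$. But the balance, together with $z_j\le z_1$ and $m\le C_mq+1$, forces $z_1\ge\Gamma/(S+\bar\lambda(m-1))\ge C_\omega q\omega/(C_m+3/2)$.
\item Hence the node-$1$ spring alone drains at least the fraction $1/(2C_m+3)$ of $F$, however large $C_\omega$ is. Increasing $C_\omega$ makes the chain more rigid relative to the node springs; it does not shrink the drained fraction.
\item The ramp profile $z_j\asymp(c_0q-j)\omega$ is also wrong. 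Every step is at most $\omega$, so all $m$ coordinates stay within $C_mq\omega$ of $z_1$. Nothing ramps down to zero on the bulk.
\item Your planned comparison $t_i=\omega(1+\theta_i)$ aims at the wrong regime. The minimizer never reaches the linear continuation. For $q\gg c_\delta^{-2}$ the last edges carry force $\bar\lambda z_m\le 2\omega^2/q<\delta_q^2$, so they sit in the transition zone below $\delta_q$. This is harmless, but it means the tail cannot, and need not, be kept above $\delta_q$.
\end{itemize}

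\textbf{The fix.} Part (ii) needs an \emph{upper} bound on the steps and a \emph{lower} bound on the starting displacement; this is how the paper argues.
\begin{itemize}
\item Coordinatewise projection onto $\mathbb R^m_+$ increases no term of the reduced objective, so $z\ge0$.
\item Reading the first-order conditions backward gives $p(d_i)=\bar\lambda\sum_{j>i}z_j\in[0,\Gamma]$. Since $p(\omega)=\omega^2=\Gamma$, this yields $0\le d_i\le\omega$.
\item The balance gives $z_1\ge C_\omega q\omega/(C_m+3/2)$, as above.
\item Therefore $z_i\ge z_1-(i-1)\omega\ge cq\omega$ for $i\le c_0q$, once $C_\omega$ is large relative to $C_m$ and $c_0$.
\end{itemize}
Summing squares over those $\asymp c_0q$ coordinates gives the projection bound $cq^{3/2}\omega=cD_q$. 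This uniform coordinate bound is also what your (iv) needs for the mass on $v_{cq},\dots,v_{c_0q}$.

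The norm bound $\|y^\star\|\le CD_q$ should not rest on the ramp either. It follows at once from your own computation, $\|y^\star\|\le\|\nabla\mathcal H(0)\|/\nu=s_q/\nu=2C_\omega D_q$, or alternatively from $z_1\le\Gamma/S$ and $m\lesssim q$.
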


\begin{proof}
The strong-convexity and regularity bounds follow from
Lemma~\ref{lem:amplifier-spectrum}, convexity of the scalar edges, the
bounded path-incidence norm, and
\[
 c_g\rho_q\omega=\Theta(\nu q^2).
\]
The third derivative comes only from the edge sum and is at most
\(Cc_g\rho_q\); decrease \(c_g\) to obtain the second bound in
\eqref{eq:handle-regularity}.

We next prove the minimizer statements.  Every component orthogonal to
\(h\) in \(\mathsf A\), to \(v_1\) in \(\mathsf B\), and to
\(\mathrm{span}\{v_2,\ldots,v_m\}\) in \(\mathsf W\) vanishes at the
minimizer.  Write
\[
 a=\alpha h,\qquad
 b=z_1v_1,\qquad
 u=\sum_{i=2}^m z_iv_i.
\]
After division by \(c_g\rho_q\), the scalar objective is
\[
 \sum_{i=1}^{m-1}\widehat g_{\delta_q,\omega}(z_i-z_{i+1})
 +\frac{\bar\lambda}{2}
 \left(2\alpha^2+(q+1)z_1^2-2\sqrt q\,\alpha z_1
       +\sum_{i=2}^m z_i^2\right)-\eta\alpha.\label{eq:handle-scalar}
\]
Minimizing in \(\alpha\) gives
\[
 \alpha=\frac{\sqrt q\,z_1+\eta/\bar\lambda}{2}\label{eq:handle-alpha}
\]
and, up to an additive constant, reduces
\eqref{eq:handle-scalar} to
\[
 \sum_{i=1}^{m-1}\widehat g_{\delta_q,\omega}(z_i-z_{i+1})
 +\frac S2z_1^2+\frac{\bar\lambda}{2}\sum_{i=2}^mz_i^2
 -\Gamma z_1,
 \qquad
 S:=\frac{\bar\lambda(q+2)}2.\label{eq:handle-reduced}
\]
Indeed, \(\eta\sqrt q/2=\Gamma\).

Coordinatewise projection onto \(\R_+^m\) cannot increase any edge
magnitude, any quadratic term, or the negative seed term, so the unique
minimizer has \(z_i\ge0\).  Let
\(p(t)=\widehat g'_{\delta_q,\omega}(t)\) for \(t\ge0\) and put
\(d_i=z_i-z_{i+1}\).  The first-order equations, read backward from the
last coordinate, give
\[
 p(d_i)=\bar\lambda\sum_{j=i+1}^mz_j\ge0,
 \qquad
 \Gamma=Sz_1+\bar\lambda\sum_{j=2}^mz_j.\label{eq:handle-flux}
\]
If the right-hand side is positive, odd monotonicity of the edge
derivative gives \(d_i>0\).  If it is zero, then
\(z_{i+1}=\cdots=z_m=0\) and hence \(d_i=z_i\ge0\).  Thus \(d_i\ge0\) in
all cases.  Since \(p\) is nondecreasing and
\(p(\omega)=\omega^2=\Gamma\), one also has
\[
 0\le d_i\le\omega.\label{eq:handle-edge-step}
\]
Because \(z_i\le z_1\) and \(m\le(C_m+1)q\),
\[
 z_1\ge
 \frac{\Gamma}{S+\bar\lambda(m-1)}
 \ge c\frac{\omega^2}{\bar\lambda q}
 =cC_\omega q\omega.\label{eq:handle-z1-lower}
\]
Choose \(C_\omega\) large relative to \(C_m\).  Combining
\eqref{eq:handle-z1-lower} and
\eqref{eq:handle-edge-step} gives
\[
 z_i\ge cq\omega=c\frac{D_q}{\sqrt q},
 \qquad 1\le i\le c_0q.\label{eq:handle-bulk-coordinates}
\]
The upper bound \(z_1\le\Gamma/S\le Cq\omega\), together with
\eqref{eq:handle-alpha}, gives
\[
 \sum_i z_i^2\le CD_q^2,
 \qquad |\alpha|\le CD_q.\label{eq:handle-norm}
\]
Equations \eqref{eq:handle-bulk-coordinates} and
\eqref{eq:handle-norm} prove \eqref{eq:handle-bulk}.

All non-seed terms in \eqref{eq:handle-packet} are nonnegative, hence
\[
 \mathcal H_{\mathcal V}^{\rm lin}(0)
 -\mathcal H_{\mathcal V}^{\rm lin}(y_{\mathcal V}^\star)
 \le s_q|\alpha^\star|.
\]
Finally,
\[
 \frac{s_q}{\nu D_q}
 =\frac{c_g\rho_q(2\omega^2/\sqrt q)}
 {c_g\rho_q\bar\lambda q^{3/2}\omega}
 =2\frac{\omega}{\bar\lambda q^2}
 =2C_\omega,\label{eq:seed-ratio}
\]
which proves \eqref{eq:low-seed} and \eqref{eq:handle-gap}.

For the oracle lower bound, once \(h\) is revealed the public isometry
reveals only \(v_1\).  Lemma~\ref{lem:amplifier-spectrum} shows that the
public quadratic cannot generate any tail direction.  Every unopened
edge has zero value--gradient--Hessian jet in its flat core.  The usual
resisting-frame induction therefore exposes at most one of
\(v_2,v_3,\ldots\) per response, and the bulk bound
\eqref{eq:handle-bulk-coordinates} yields distance \(\Omega(D_q)\) after
fewer than \(cq\) responses.

For randomized algorithms, localize the entire hidden edge sum outside
a public ball of radius \(R_q=CD_q\) and add the public radial collar constructed in Lemma~\ref{lem:handle-paired-shell}; leave the public amplifier global.
Before activation the linear seed is absent.  Include the query's
\(\mathsf A\) component, its pullback \(C^*b\), and its \(\mathsf W\) component
in the virtual-filtration candidate list.  Since
\(R_q/\delta_q=\Theta(q^{3/2})\), the adaptive-Haar estimate in
Lemma~\ref{lem:adaptive-haar-side} applies in dimension
\(d_{\rm amb}\ge Cq^3\log((N+2)m/\zeta)\).  All hidden edge arguments then remain in
flat cores until their predecessors are revealed.  The shell removes
hidden dependence for unbounded queries.  Appending the unqueried output
and averaging over the handle and tail frames gives one fixed instance
with the asserted failure probability.  The same argument before
activation shows that the seedless endpoint returns only the public
amplifier and public shell terms on the truncated transcript.
\end{proof}

\begin{corollary}[Private-handle parameter alignment]
\label{cor:handle-alignment}
Assume
\[
 c_q\min\{r^{2/7},\sqrt\kap\}\ge q_0,
\]
and let
\[
 q=\left\lfloor c_q\min\{r^{2/7},\sqrt\kap\}\right\rfloor,
 \qquad
 D_q=K_D\frac{\eps\sqrt\kap}{L},\label{eq:handle-alignment}
\]
where \(c_q\) is small and \(K_D\) is a sufficiently large universal
constant.  Choose the packet budget so that
\eqref{eq:handle-Dq} holds.  Then
\[
 \rho_q\asymp
 \frac{L^2q^{7/2}}{\kap^{3/2}\eps}\le\rho,
 \qquad
 \nu D_q^2\asymp\frac{\eps^2}{L},
 \qquad
 \nu q^2\lesssim L.\label{eq:handle-aligned-rates}
\]
\end{corollary}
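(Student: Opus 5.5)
The plan is to read everything off the scaling relation \eqref{eq:handle-Dq}, \(D_q\asymp\nu q^{7/2}/\rho_q\), with \(\nu\asymp\mu\) and \(\mu=L/\kap\). Solving for the packet budget gives
\[
 \rho_q\asymp\frac{\nu q^{7/2}}{D_q}
 \asymp\frac{(L/\kap)\,q^{7/2}}{K_D\eps\sqrt\kap/L}
 \asymp\frac{L^2q^{7/2}}{\kap^{3/2}\eps}.
\]
This is the first display, with the universal constant \(1/K_D\) absorbed. For the inequality \(\rho_q\le\rho\) we use the choice of \(q\): since \(q\le c_q r^{2/7}\), we have \(q^{7/2}\le c_q^{7/2}r=c_q^{7/2}\rho\kap^{3/2}\eps/L^2\). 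Hence \(\rho_q\le C c_q^{7/2}\rho/K_D\), which is at most \(\rho\) (indeed at most \(\rho/8\), as the budget table of Lemma~\ref{lem:private-constant-hierarchy} requires) once \(c_q\) is chosen small after \(K_D\) and the other large constants are fixed. This respects the order \eqref{eq:constant-choice-order}.

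The second relation is direct. We have \(\nu D_q^2\asymp(L/\kap)K_D^2\eps^2\kap/L^2=K_D^2\eps^2/L\), and the \(\kap\) cancels exactly. For the third, \(q\le c_q\sqrt\kap\) gives \(\nu q^2\lesssim(L/\kap)c_q^2\kap=c_q^2L\). This is at most \(L/16\) for \(c_q\) small, which matches the packet Hessian budget and, through Theorem~\ref{thm:private-handle-packet}(i), the bound \(\mathrm{Lip}(\nabla\mathcal H)\le C\nu q^2\).

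The only delicate point is bookkeeping rather than analysis. First, the internal slack quantities \(L_\circ,\mu_\circ,\kap_\circ\) from \eqref{eq:slack-budgets} differ from \(L,\mu,\kap\) by universal factors, so every \(\asymp\) remains valid. Second, \(\rho_q\) is treated as a free parameter defined by inverting \eqref{eq:handle-Dq}. Because Theorem~\ref{thm:private-handle-packet} holds for every \(\rho_q>0\), this is legitimate. Third, the floor in the definition of \(q\) only helps: the assumption \(c_q\min\{r^{2/7},\sqrt\kap\}\ge q_0\ge1\) ensures \(q\ge1\) and \(q\asymp c_q\min\{r^{2/7},\sqrt\kap\}\), which is what later sections need. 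The main thing to check is that shrinking \(c_q\) does not force any previously fixed large constant, such as \(K_D\) or \(C_\omega\), to change. This holds because those constants enter only as fixed multiplicative factors in the bounds above.
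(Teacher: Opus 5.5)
Your proposal is correct and follows the paper's own proof. You invert \eqref{eq:handle-Dq} with \(\nu\asymp L/\kap\) to obtain \(\rho_q\), use \(q^{7/2}\lesssim c_q^{7/2}r\) and \(q^2\lesssim c_q^2\kap\) for the two inequalities, and compute \(\nu D_q^2\) directly. One small bookkeeping point: the constant hidden in \eqref{eq:handle-Dq} is exactly \(C_\omega/c_g\), so \(c_q\) must be chosen after the small constant \(c_g\) as well as after the large ones, and the order \eqref{eq:constant-choice-order} permits this.
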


\begin{proof}
The first identity follows from
\(D_q\asymp\nu q^{7/2}/\rho_q\) and \(\nu\asymp L/\kap\).
The definition of \(q\) gives \(q^{7/2}\lesssim r\) and
\(q^2\lesssim\kap\), proving the two inequalities.  The gap identity is
\(\nu D_q^2\asymp(L/\kap)(\eps^2\kap/L^2)\).
\end{proof}

\subsection{Bounded endpoints without a public hidden direction}
Choose a smooth odd saturated-linear function \(\varphi\) and universal
constants \(1<C_0<C_1\) such that
\[
 \varphi(t)=t\ (|t|\le C_0),\qquad
 \varphi'(t)=0\ (|t|\ge C_1),\qquad
 \sum_{j=0}^3\|\varphi^{(j)}\|_\infty\le C.\label{eq:bounded-seed-profile}
\]

Let \(s_q\) be \eqref{eq:handle-physical-seed}.  Choose a smooth odd
saturated-linear profile \(\varphi\) as in
\eqref{eq:bounded-seed-profile}, choose
\[
 A_q=C_AD_q,
 \qquad E_q=s_qA_q\asymp\nu D_q^2,\label{eq:handle-AE}
\]
with \(C_A\) sufficiently large, and let
\(\mathcal G_{\mathcal V}\) denote \eqref{eq:handle-packet} with its
linear seed removed.  Define the unlocalized endpoints
\[
 \bar H_{\mathcal V}^0(y)=\mathcal G_{\mathcal V}(y),
 \qquad
 \bar H_{\mathcal V}^1(y)=
 \mathcal G_{\mathcal V}(y)
 -E_q\varphi\!\left(\frac{\ip h a}{A_q}\right).\label{eq:handle-endpoints-unlocalized}
\]
The active minimizer lies in the linear core of \(\varphi\).  Moreover,
\(E_q/A_q^2=s_q/A_q=O(\nu/C_A)\), so for large \(C_A\) the negative seed
curvature uses only a small fraction of the amplifier's strong-convexity
margin; no rank-one correction involving \(h\) is added to the inactive
endpoint.

\begin{lemma}[Private-handle paired shell]
\label{lem:handle-paired-shell}
There are a common public cutoff and a common public radial collar that
produce localized endpoints \(H_{\mathcal V}^0,H_{\mathcal V}^1\) with
the following properties.
\begin{enumerate}[label=(\roman*)]
\item Both endpoints are globally \(\nu_0\)-strongly convex for a fixed
\(\nu_0\asymp\nu\), are \(O(\nu q^2)\)-smooth, and have
\(O(\rho_q)\)-Lipschitz Hessian.
\item Their minimizers are respectively \(0\) and the active point
\(y_{\mathcal V}^\star\) from
Theorem~\ref{thm:private-handle-packet}.
\item Their difference \(d_{\mathcal V}=H_{\mathcal V}^1-H_{\mathcal V}^0\)
obeys
\[
 \boxed{
 \sup_y\norm{D^jd_{\mathcal V}(y)}
 \le CE_qD_q^{-j},\qquad j=0,1,2,3.
 }\label{eq:handle-difference-bounds}
\]
For \(j=0\) the norm means absolute value.
\item Before activation, no selected handle or tail direction occurs in
the complete inactive transcript whenever all hidden edge arguments lie
in their flat cores.  After activation one may reveal \(h\) for free, and
each exact second-order response can reveal at most one new tail
direction.
\end{enumerate}
\end{lemma}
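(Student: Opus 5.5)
The plan is to build both endpoints from the unlocalized pair \(\bar H^0_{\mathcal V},\bar H^1_{\mathcal V}\) in \eqref{eq:handle-endpoints-unlocalized} by one public localization that does not depend on \(\mathcal V\). Fix a public radius \(R_q=C_RD_q\), with \(C_R\) large enough that \(0\), \(y_{\mathcal V}^\star\) and every chamber used later lie in \(\{\norm y\le R_q/2\}\); this is possible by \eqref{eq:handle-bulk}. Take a smooth radial cutoff \(\chi_R(y)=\chi(\norm y/R_q)\), equal to \(1\) on the inner ball and \(0\) outside radius \(R_q\). Define
\[
 H^i_{\mathcal V}(y)=Q_q(y)\text{-part}+\chi_R(y)\bigl(\bar H^i_{\mathcal V}(y)-c_g\rho_q\bar\lambda\,\tfrac{1}{\bar\lambda}Q_q\text{-part}\bigr)+\mathcal C(y),
\]
that is: keep the public amplifier global, multiply the hidden edge sum and the seed term by \(\chi_R\), and add a public radial collar \(\mathcal C(y)=C_{\rm col}\nu\,\theta(\norm y/R_q)R_q^2\). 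Here \(\theta\) is convex, vanishes on \([0,1/2]\), and has curvature chosen so that the collar compensates the negative Hessian terms produced by differentiating the cutoff products on the annulus. Because both the cutoff and the collar are public and radial, neither depends on \(h\) or on the tail.

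For (i), I would expand \(\nabla^2(\chi_R F)=\chi_R\nabla^2F+2\nabla\chi_R\otimes\nabla F+F\nabla^2\chi_R\). The edge terms are convex and obey \(0\le\widehat g'\le t^2\) and \(\widehat g\le t^3/3\) (Lemma~\ref{lem:flat-cubic}). On the annulus their gradients and values are therefore bounded by \(c_g\rho_q\cdot O(R_q^2)\) and \(c_g\rho_q O(R_q^3)\). Dividing by \(R_q\) and \(R_q^2\), the cross terms are \(O(c_g\rho_qR_q)=O(\nu q^2\cdot C_R^{O(1)}\cdot(\text{const}))\). This is where the first obstacle sits: the crude bound gives a term comparable to \(\nu q^2\), not \(\nu\), so it cannot be absorbed by the amplifier's \(\nu\) margin alone. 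I would handle it by putting the collar's curvature at level \(C_{\rm col}\nu q^2\) on the annulus only. The collar is then convex and public and costs only \(O(\nu q^2)\) smoothness, and strong convexity \(\nu_0\asymp\nu\) comes from \(Q_q\) together with the convex collar. The third-derivative bound \(O(\rho_q)\) follows in the same way, since each derivative of \(\chi_R\) or \(\theta\) costs \(R_q^{-1}\) and \(\rho_qR_q\cdot R_q^{-1}=\rho_q\). The seed term is handled identically, using \(E_q/A_q^2=O(\nu/C_A)\).

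For (ii), the inactive endpoint \(H^0\) has gradient zero at \(0\): there \(\chi_R=1\), \(\mathcal C=0\), the edges are flat, and \(Q_q\) is minimized at \(0\). Strong convexity then makes \(0\) the unique minimizer. For \(H^1\), the point \(y^\star_{\mathcal V}\) lies in the identity region and in the linear core of \(\varphi\). Near it, \(H^1\) coincides with \(\mathcal H^{\rm lin}_{\mathcal V}\) from \eqref{eq:handle-packet} up to a constant, since \(E_q\varphi(\ip ha/A_q)=s_q\ip ha\) there. So its stationary point, and by strong convexity its global minimizer, is \(y^\star_{\mathcal V}\).

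For (iii), the difference is \(d_{\mathcal V}=-\chi_RE_q\varphi(\ip ha/A_q)\). Leibniz's rule and \(A_q\asymp R_q\asymp D_q\) give \(\norm{D^jd}\le CE_qD_q^{-j}\) directly. For (iv), I would observe that on the region where every hidden edge argument lies in its flat core, the inactive endpoint's jet consists of the public \(Q_q\), the public collar and zeros, so no element of \(\mathcal V\) appears. After activation, reveal \(h\). By Lemma~\ref{lem:amplifier-spectrum} the public Hessian can then reach only \(v_1=Ch\). Each response is a jet in which only already-opened edges and at most the single next edge leaving its flat core contribute, so the usual chain induction exposes at most one new \(v_i\) per query. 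I expect the main difficulty to be the annulus curvature accounting in (i); the remaining items are bookkeeping.
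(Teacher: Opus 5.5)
Your construction is the paper's: a global public amplifier, one public radial cutoff applied to the hidden edge sum and the bounded seed, a common public radial collar of curvature of order $\nu q^2$ on the annulus, the product rule for (iii), and flatness plus the one-new-direction induction for (iv). Items (ii)--(iv) are fine.

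The gap is in the quantitative step of (i), the one you yourself flag as the main difficulty. You bound the shell values and gradients of the edge sum using only $\widehat g'\le t^2$ and $\widehat g\le|t|^3/3$. This gives cross terms of order $c_g\rho_qR_q$, and you then assert $c_g\rho_qR_q=O(\nu q^2)$. That is false. By \eqref{eq:handle-scales}, $c_g\rho_q\omega=C_\omega\nu q^2$, while $R_q=C_RD_q=C_Rq^{3/2}\omega$, so $c_g\rho_qR_q=C_RC_\omega\nu q^{7/2}$.

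With your bounds, a collar of curvature $C_{\rm col}\nu q^2$ does not absorb the cutoff-product negative curvature. A collar of curvature $\nu q^{7/2}$ would absorb it, but the endpoints would then be only $O(\nu q^{7/2})$-smooth. That contradicts (i) and the later class budget $\nu q^2\lesssim L$: for $q\asymp\sqrt\kappa$ one gets $\nu q^{7/2}\asymp L\kappa^{3/4}$.

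The missing ingredient is the truncation of the edge profile at $\omega$. On the shell the edge arguments reach size $R_q\gg\omega$, where the profile is quadratic, not cubic. From \eqref{eq:flat-cubic}, $p(t)\le2\omega t$ for $t\ge0$: indeed $t^2\le\omega t$ on $[0,\omega]$ and $2\omega t-\omega^2\le2\omega t$ beyond. Hence $\widehat g(t)\le\omega t^2$, and $\widehat g''\le C_{\rm fc}\omega$ by Lemma~\ref{lem:flat-cubic}.

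Combine this with the path-incidence operator norm, which is at most $2$. With $M_q=\nu q^2$, the hidden edge sum on the shell has value $O(M_qR_q^2)$, gradient $O(M_qR_q)$, Hessian $O(M_q)$, and third derivative $O(\rho_q)$. Only then are the cutoff cross terms $O(M_q)$. They are absorbed by a collar of curvature $C_{\rm col}M_q$, whose third derivative is $O(M_q/R_q)=O(\rho_q/q^{3/2})$. This is exactly the paper's shell estimate.

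A smaller point: the collar must have both its radial and tangential curvature at full strength on the entire transition annulus of the cutoff. The paper nests the radii for this. The collar ramps up on $[R_q,r_0R_q]$ and the cutoff transitions on $[r_0R_q,r_1R_q]$. The choice $\eta(s)=\int_1^s u\zeta(u)\,du$ then makes both eigenvalues at least $C_{\rm col}M_q$ there. With your $\theta$ vanishing on $[0,1/2]$, the cutoff's unit plateau must extend strictly beyond radius $R_q/2$.
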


\begin{proof}
Keep the public amplifier \eqref{eq:public-amplifier} global.  Multiply
the hidden edge sum and the bounded seed in
\eqref{eq:handle-endpoints-unlocalized} by one public radial cutoff that
is one on \(B(0,R_q)\) and zero outside \(B(0,2R_q)\), where
\(R_q=C_RD_q\).  Choose the cutoff to be one on \(B(0,r_0R_q)\) and zero outside
\(B(0,r_1R_q)\) for fixed \(1<r_0<r_1<2\), with
\(\|D^j\chi\|\le C_jR_q^{-j}\).  Let \(\zeta\) be a smooth nondecreasing
profile, zero on \([0,1]\) and one on \([r_0,\infty)\), define
\(\eta(s)=\int_1^s u\zeta(u)\,du\) for \(s>1\) and \(\eta(s)=0\) for
\(s\le1\), and add the common public collar
\(\mathcal C_q(y)=C_{\rm col}M_qR_q^2\eta(\|y\|/R_q)\).  On the
transition annulus its radial and tangential Hessian eigenvalues are
both at least \(C_{\rm col}M_q\), while
\(\mathrm{Lip}(\nabla^2\mathcal C_q)=O(M_q/R_q)\).  Taking
\(C_{\rm col}\) larger than the universal cutoff-product constant absorbs
all negative annular curvature simultaneously for both endpoints.
On the shell, the hidden correction
has value, gradient, Hessian, and third-derivative bounds
\[
 O(M_qR_q^2),\quad O(M_qR_q),\quad O(M_q),\quad O(\rho_q),
 \qquad M_q:=\nu q^2.
\]
The seed contribution is smaller because
\[
 E_q/D_q^2=O(\nu),\qquad
 E_q/D_q^3=O(\nu/D_q)=O(\rho_q/q^{7/2}).
\]
Also \(M_q/R_q=O(\rho_q/q^{3/2})\).  Product differentiation and a collar
of curvature \(CM_q\) therefore preserve the claimed smoothness and
Hessian-Lipschitz bounds and remove all hidden dependence outside the
outer shell.  In the inner ball the seed Hessian has norm
\(O(\nu/C_A)\), so the active endpoint remains strongly convex.  On the
annulus the collar absorbs all cutoff-product negative curvature.

The active minimizer and the origin lie in the unchanged inner ball.
The derivative estimate \eqref{eq:handle-difference-bounds} follows from
\(A_q\asymp R_q\asymp D_q\) by the product rule.  Item~(iv) follows from exact flatness and the one-new-direction induction in Theorem~\ref{thm:private-handle-packet}.  Notice especially that
\(H^0\) contains no term of the form \(hh^\top\): its public quadratic
carrier is independent of the selected handle.
\end{proof}

The normalizing constants \(m_a:=\min H_{\mathcal V}^a\) are independent
of the orientation of \(\mathcal V\).  Indeed, paired orthogonal rotations
of \(\mathsf A\) and \(\mathsf B\) that intertwine \(C\), together with an
orthogonal rotation of \(\mathsf W\), map any canonical frame to any other
and leave the public amplifier invariant.

\subsection{Fenchel lift with a low-gradient endpoint difference}
Choose \(\beta\asymp L\), \(c\asymp\mu\), and
\(\lambda=\sqrt{\beta c}\), with the constant slack in
\eqref{eq:slack-budgets}.  Arrange
\(H_{\mathcal V}^a\) to have a sufficiently large constant multiple of
\(\mu\) as its strong-convexity modulus, and define
\[
 h_{\mathcal V}^a(y)=H_{\mathcal V}^a(y)-\frac c2\norm y^2,
 \qquad
 f_{\mathcal V}^a(x,y)=
 \frac\beta2\norm x^2+\lambda\ip xy-h_{\mathcal V}^a(y)+m_a.\label{eq:handle-fenchel}
\]
Let \(\Phi_{\mathcal V}^a=\max_y f_{\mathcal V}^a\) and
\(\Delta f_{\mathcal V}=f_{\mathcal V}^1-f_{\mathcal V}^0\).
Apply Lemma~\ref{lem:fenchel-distance-transfer} with \(H=H_{\mathcal V}^a\) and minimizer \(y_{\mathcal V}^\star\).  In
particular, the active value function is \(\beta\)-strongly convex, is
minimized at
\[
 x_{\mathcal V}^\star=-\frac\lambda\beta y_{\mathcal V}^\star,\label{eq:handle-xstar}
\]
and distance \(cD_q\) from \(y_{\mathcal V}^\star\) transfers to gradient
scale \(c\lambda D_q\).

\begin{lemma}[Low-gradient Fenchel packet]
\label{lem:handle-fenchel-packet}
Under the alignment \eqref{eq:handle-alignment}, after universal
rescaling the pair \eqref{eq:handle-fenchel} belongs to the original
NC--SC class and has the following properties.  Put
\[
 E:=\frac{\eps^2}{L},\qquad
 \ell:=\frac{\lambda D_q}{\beta}\asymp\frac\eps L.\label{eq:handle-E-ell}
\]
Then the active endpoint has initial gap \(O(E)\), and packet distance
\(cD_q\) transfers to value-function gradient scale \(c\lambda D_q\).
Moreover,
\[
 \boxed{
 \begin{aligned}
 \norm{\Delta f_{\mathcal V}}_\infty&\le CE,\\
 \norm{\nabla\Delta f_{\mathcal V}}_\infty
   &\le C\mu D_q\le C\frac\eps{\sqrt\kap},\\
 \norm{\nabla^2\Delta f_{\mathcal V}}_\infty
   &\le C\mu,\\
 \mathrm{Lip}(\nabla^2\Delta f_{\mathcal V})
   &\le C\frac\mu{D_q}
    \le C\frac{L^2}{\eps\kap^{3/2}}.
 \end{aligned}}\label{eq:handle-refined-difference}
\]
The difference is independent of \(x\) and depends on the private frame
only through the handle \(h\); no tail direction occurs in it.
\end{lemma}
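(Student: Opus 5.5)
The plan is to verify three things in turn: class membership of the pair \eqref{eq:handle-fenchel}, the gap and transfer claims, and then the difference bounds. The difference bounds are the only part that needs new care.

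\emph{Class membership.} The joint Hessian of \(f_{\mathcal V}^a\) is
\[
 \begin{bmatrix}\beta I&\lambda I\\ \lambda I&-\nabla^2H_{\mathcal V}^a+cI\end{bmatrix}.
\]
Its norm is at most \(\beta+\lambda+\norm{\nabla^2H^a}+c\). Lemma~\ref{lem:handle-paired-shell}(i) gives \(\norm{\nabla^2H^a}=O(\nu q^2)\), and Corollary~\ref{cor:handle-alignment} gives \(\nu q^2\lesssim L\). With \(\beta\asymp L\), \(c\asymp\mu\) and \(\lambda=\sqrt{\beta c}\le L\), the slack budgets \eqref{eq:slack-budgets} and Lemma~\ref{lem:private-constant-hierarchy} then fit everything below \(L\). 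The Hessian-Lipschitz constant is that of \(H^a\), namely \(O(\rho_q)\le\rho\) by \eqref{eq:handle-aligned-rates}. The dual block is \(-(\nabla^2H^a-cI)\preceq-(\nu_0-c)I\). Choosing the strong-convexity modulus of \(H^a\) as a large multiple of \(\mu\) and \(c\) a smaller multiple leaves at least \(\mu\).

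\emph{Gap and transfer.} By Lemma~\ref{lem:fenchel-distance-transfer}, \(\Phi^1_{\mathcal V}\) is nonnegative with minimum zero at \(x^\star_{\mathcal V}\). For the initial gap at \(x=0\), note \(\Phi^1(0)=\max_y\{-H^1(y)+\tfrac c2\norm y^2\}+m_1\). Since \(H^1-\tfrac c2\norm\cdot^2\) is strongly convex with minimizer within \(O(D_q)\) of \(y^\star\), this is bounded by \(O(\nu D_q^2)\). Here I use \(H^1(0)-m_1\le C\nu D_q^2\) from \eqref{eq:handle-gap} together with the bounded difference \(E_q\asymp\nu D_q^2\). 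By \eqref{eq:handle-aligned-rates}, \(\nu D_q^2\asymp\eps^2/L=E\). The transfer of distance \(cD_q\) to gradient \(c\lambda D_q\) is exactly \eqref{eq:fenchel-distance-core}, using \(c/(\mu+c)\le1\) after adjusting constants. Finally, \(\ell=\lambda D_q/\beta=\sqrt{c/\beta}\,D_q\asymp D_q/\sqrt\kap\asymp\eps/L\) by \eqref{eq:handle-alignment}.

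\emph{Difference bounds.} The quadratic terms in \(x\) and \(\ip xy\) cancel in the difference, so
\[
 \Delta f_{\mathcal V}=-(H^1-H^0)+(m_1-m_0)=-d_{\mathcal V}+(m_1-m_0),
\]
which is independent of \(x\). Its derivatives are those of \(d_{\mathcal V}\), and \eqref{eq:handle-difference-bounds} gives \(\norm{D^jd}\le CE_qD_q^{-j}\). The \(j=0\) bound also controls \(|m_1-m_0|\le\sup|d|\le CE_q\). Since \(E_q\asymp\nu D_q^2\asymp\mu D_q^2\), the orders \(j=0,\dots,3\) give respectively \(CE\), \(C\mu D_q\), \(C\mu\), and \(C\mu/D_q\). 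For the gradient, \(\mu D_q=K_D\mu\eps\sqrt\kap/L=K_D\eps/\sqrt\kap\). For the Hessian-Lipschitz term, \(\mu/D_q=(L/\kap)\cdot L/(K_D\eps\sqrt\kap)\lesssim L^2/(\eps\kap^{3/2})\).

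For the frame dependence, \(d_{\mathcal V}\) is the shelled bounded seed \(-E_q\varphi(\ip ha/A_q)\) times the common public cutoff. This uses that the hidden edge sum and the collar are common to both endpoints under the same cutoff, so they cancel. It also uses that \(m_1-m_0\) is orientation-free, as shown after Lemma~\ref{lem:handle-paired-shell}. Hence only \(h\) appears and no tail direction does.

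\emph{Main obstacle.} The step needing the most care is confirming that the localization in Lemma~\ref{lem:handle-paired-shell} really makes \(H^1-H^0\) consist solely of the cutoff-weighted seed term. Concretely, this requires the cutoff and collar to be chosen identically and independently of activation, so that no hidden-edge residue survives in the difference. I would check this directly from the shell construction. The norm bounds are then routine product-rule estimates, using \(A_q\asymp R_q\asymp D_q\).
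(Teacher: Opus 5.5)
Your proposal is correct and follows essentially the same route as the paper. Class membership comes from $\nu q^2\lesssim L$, $\rho_q\le\rho$ and the constant slack. The gap and the distance-to-gradient transfer come from Theorem~\ref{thm:private-handle-packet} and Lemma~\ref{lem:fenchel-distance-transfer}. The four difference bounds come from \eqref{eq:handle-difference-bounds} with $E_q\asymp\mu D_q^2$, once one observes that only the localized handle seed survives in $H^1_{\mathcal V}-H^0_{\mathcal V}$. Your explicit estimate $|m_1-m_0|\le\sup|d_{\mathcal V}|\le CE_q$ is a small but useful addition: the paper only notes that this constant has zero derivatives, which does not by itself give the $j=0$ sup-norm bound on $\Delta f_{\mathcal V}$.
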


\begin{proof}
Class membership follows from
\(\nu q^2\lesssim L\), \(\rho_q\le\rho\), and the constant-slack choice.
The gap and query lower bound are
Theorem~\ref{thm:private-handle-packet},
Lemma~\ref{lem:fenchel-distance-transfer}, and
\(\lambda D_q\asymp\eps\), together with \eqref{eq:fenchel-distance-core}.  Since the two endpoints have identical public
quadratic and identical hidden edge sum, their nonconstant difference is
only the localized bounded handle seed.  Equation
\eqref{eq:handle-difference-bounds}, together with
\(E_q\asymp\mu D_q^2\), gives all four inequalities in
\eqref{eq:handle-refined-difference}.  The scalar normalizing difference
\(m_1-m_0\) is public and has zero derivatives.
\end{proof}

\subsection{A dual relay that preserves private activation}
For \(r\ge1\), choose
\[
 a_{\rm rel}:=C_a\max\left\{1,
 \left(\frac{\kap^{3/2}}r\right)^{1/3}\right\},
 \qquad
 \sigma_{\rm rel}:=c_\sigma\frac{L}{a_{\rm rel}^2}.\label{eq:handle-relay-parameters}
\]
The gate profile and the constants in \(D_q\) are chosen so that
\(1\lesssim a_{\rm rel}\lesssim\sqrt\kap\), the relay quadratic fits in
the joint smoothness budget, \(\sigma_{\rm rel}\) has a sufficiently large constant
multiple of \(\mu\) as its \(s\)-curvature, and
\[
 \frac{L^2}{a_{\rm rel}^3\eps}\le c\rho.\label{eq:handle-relay-third}
\]
This simultaneous choice is possible under the constant-slack convention
\eqref{eq:slack-budgets}.

Let \(\vartheta\) be a smooth nonincreasing radial profile, equal to one
on a small inner ball and exactly zero outside a larger ball, with all
jets zero on both plateaus.  Define
\[
 \Theta_t(s_t)=\vartheta\!\left(
 \frac{\norm{s_t-a_{\rm rel}x_t^\star}^2}
 {a_{\rm rel}^2\ell^2}\right),
 \qquad
 \norm{D^j\Theta_t}\le
 \frac{C_j}{(a_{\rm rel}\ell)^j},\quad j=1,2,3.\label{eq:handle-relay-gate}
\]
For independent private-handle packets, set
\[
 \boxed{
 \begin{aligned}
 F_T(\mathbf x,\mathbf y,\mathbf s)
 :={}&f_1^1(x_1,y_1)
 -\frac{\sigma_{\rm rel}}2\sum_{t=1}^T
       \norm{s_t-a_{\rm rel}x_t}^2\\
 &+\sum_{t=2}^T\left[
 f_t^0(x_t,y_t)+\Theta_{t-1}(s_{t-1})
             \Delta f_t(x_t,y_t)\right]
 -K_0E\sum_{t=1}^{T-1}\Theta_t(s_t).
 \end{aligned}}\label{eq:handle-relay-family}
\]
The full dual variable is \((\mathbf y,\mathbf s)\).

\begin{lemma}[Analytic validity of the private-handle relay]
\label{lem:handle-relay-regularity}
For appropriate universal choices and bounded-degree block scaling,
\eqref{eq:handle-relay-family} is jointly \(L\)-smooth, has
\(\rho\)-Lipschitz Hessian, and is \(\mu\)-strongly concave in
\((\mathbf y,\mathbf s)\) for every \(\mathbf x\).  Moreover,
\[
 \Phi_T(\mathbf x_0)-\inf\Phi_T\le CTE\label{eq:handle-relay-gap}
\]
for the exact-inner initialization described below.
\end{lemma}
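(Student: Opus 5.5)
We split $F_T$ into three groups of terms and bound the Hessian, the third derivative and the dual curvature of each group against the reserved budgets of Lemma~\ref{lem:private-constant-hierarchy}. The groups are: the packet terms $f_1^1$ and $f_t^0$; the relay quadratic $-\frac{\sigma_{\rm rel}}2\sum_t\norm{s_t-a_{\rm rel}x_t}^2$; and the gate products $\Theta_{t-1}(s_{t-1})\Delta f_t(x_t,y_t)$ together with the rewards $-K_0E\,\Theta_t(s_t)$.

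The first two groups are direct. Each packet term acts on its own block $(x_t,y_t)$, so the packet contributions are block-diagonal. By Lemma~\ref{lem:handle-fenchel-packet} and Lemma~\ref{lem:handle-paired-shell} each block is $L/16$-smooth and has $\rho/8$-Lipschitz Hessian. The relay quadratic has joint Hessian norm $\sigma_{\rm rel}(1+a_{\rm rel}^2)\le L/8$, zero third derivative, and $s$-curvature $\sigma_{\rm rel}\ge4\mu$.

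The main work is the gate products. Each product couples only $s_{t-1}$ with $(x_t,y_t)$, so every derivative of $\Theta_{t-1}\Delta f_t$ splits by the product rule into terms $D^i\Theta_{t-1}\otimes D^j\Delta f_t$. We bound these with \eqref{eq:handle-relay-gate} and \eqref{eq:handle-refined-difference}, using the scale relations $a_{\rm rel}\ell\asymp a_{\rm rel}\eps/L$, $\mu D_q\lesssim\eps/\sqrt\kap$ and $E=\eps^2/L$.

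For the Hessian, the worst terms are:
\[
 E/(a_{\rm rel}\ell)^2\asymp L/a_{\rm rel}^2,
 \qquad
 \mu D_q/(a_{\rm rel}\ell)\lesssim L/(a_{\rm rel}\sqrt\kap),
 \qquad
 \mu .
\]
All three are at most $cL$, since $a_{\rm rel}\ge C_a$ is a large constant.

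For the third derivative, the four ratios are:
\[
 E/(a_{\rm rel}\ell)^3\asymp L^2/(a_{\rm rel}^3\eps),\qquad
 \mu D_q/(a_{\rm rel}\ell)^2,\qquad
 \mu/(a_{\rm rel}\ell),\qquad
 \mu/D_q\lesssim L^2/(\eps\kap^{3/2}).
\]
The first is $\le c\rho$ by \eqref{eq:handle-relay-third}. The remaining three are dominated by the first and fourth, using $1\le a_{\rm rel}\lesssim\sqrt\kap$. The fourth is $\le c\rho$ because $r\ge1$.

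The reward has Hessian at most $K_0E/(a_{\rm rel}\ell)^2\asymp K_0L/a_{\rm rel}^2$. Its third derivative is controlled by the same ratio as the first third-derivative term. Both fit the budget after $c_\sigma$ and $C_a$ are chosen as in Lemma~\ref{lem:private-constant-hierarchy}.

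Since each $s_t$ appears in at most one packet product and one reward, the operator norms of the sums over $t$ are bounded by a constant multiple of the per-$t$ bounds. This is the bounded-degree block scaling mentioned in the statement.

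Strong concavity in $(\mathbf y,\mathbf s)$ is the step I expect to be the main obstacle. The dual Hessian has three kinds of entries.
\begin{itemize}
\item Diagonal $y_t$ blocks: at most $-C_h\mu$, by Lemma~\ref{lem:fenchel-distance-transfer} with the inflated strong convexity of $H^a$, corrected by $\Theta\nabla^2_{yy}\Delta f_t$ of size $O(\mu/C_A)$.
\item Diagonal $s_t$ blocks: at most $-\sigma_{\rm rel}+O(L/a_{\rm rel}^2)$, from the gate and reward curvature.
\item Off-diagonal $(s_{t-1},y_t)$ blocks: $D\Theta_{t-1}\otimes\nabla_y\Delta f_t$, of size $\lesssim\mu D_q/(a_{\rm rel}\ell)$.
\end{itemize}
We control the off-diagonal blocks with Young's inequality weighted by $\sqrt{\sigma_{\rm rel}/\mu}$. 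The weighted cross term is at most
\[
 \frac{(\mu D_q/(a_{\rm rel}\ell))^2}{\sigma_{\rm rel}}
 \asymp\frac{\mu^2D_q^2L}{c_\sigma\eps^2}
 \asymp\frac{K_D^2\mu}{c_\sigma}.
\]
This is below $(C_h-2)\mu$ once $C_h$ is chosen after $K_D$ and $c_\sigma$, in the order of \eqref{eq:constant-choice-order}. The leftover diagonal margins then give at least $\mu I$.

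Finally, the gap \eqref{eq:handle-relay-gap} comes from the initialization $\mathbf x_0=0$, $s_t=a_{\rm rel}x_t$, with exact inner maximizers. Each active or inactive packet contributes gap $O(E)$ by Lemma~\ref{lem:handle-fenchel-packet}. Each difference term contributes $\norm{\Delta f}_\infty\le CE$, and each reward contributes at most $K_0E$. For the lower side, $\inf\Phi_T$ is bounded below termwise by $-CTE$, since the gates take values in $[0,1]$. Summing over $t$ gives $CTE$.
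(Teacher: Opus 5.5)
Your proposal is essentially the paper's own proof. It uses the same three second-derivative groups and four third-derivative groups for $\Theta\,\Delta f$, the bounded-degree block argument, Young's inequality on the disjoint dual pairs $(s_t,y_{t+1})$, and a termwise estimate for the gap. Your handling of the two middle third-derivative ratios is a harmless variant of the paper's case split on $r\lessgtr\kappa^{3/2}$. With $X=\sqrt\kappa/a_{\rm rel}$ the four ratios are $X^3/r$, $X^2/r$, $X/r$, $1/r$, and $\max\{X^2,X\}\le\max\{X^3,1\}$ holds for every $X>0$.

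One step should be corrected: the constant bookkeeping in your Young estimate.
\begin{itemize}
\item The factor $K_D^2$ in your cross-term bound is an artifact of using $\ell\asymp\varepsilon/L$ while keeping $K_D$ in $D_q$. Since $\ell=\lambda D_q/\beta$, the ratio $D_q/\ell=\beta/\lambda\asymp\sqrt\kappa$ does not involve $K_D$. The cross term is therefore $\lesssim\mu/c_\sigma\le C'\mu$ for $c_\sigma\ge1$, which is the paper's estimate \eqref{eq:handle-young}.
\item This is why Lemma~\ref{lem:private-constant-hierarchy} can fix $C_h$ \emph{before} $C_a$, $K_D$, $c_\sigma$. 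You state that order backwards.
\item ``Choose $C_h$ last'' would not close the argument anyway. The margin $C_h\mu$ is supplied by the packet curvature $\nu$, and $\|\nabla_y\Delta f\|\lesssim E_q/D_q\asymp\nu D_q$ grows with it. So the cross term scales like $C_h^2\mu/c_\sigma$, and the room comes from choosing $c_\sigma$ large afterwards.
\end{itemize}

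Two smaller points about the gap estimate.
\begin{itemize}
\item For the lower side you must evaluate the inner maximum at $s_t=a_{\rm rel}x_t$ for every $\mathbf x$. The relay quadratic is not bounded below termwise.
\item The statement refers to the later initialization $x_{1,0}=\lambda^{-1}\nabla h_1^1(0)$, $\mathbf y_0=0$, $s_{1,0}=a_{\rm rel}x_{1,0}$, not to $\mathbf x_0=0$. Your termwise upper estimate covers that point too: $\|x_{1,0}-x_1^\star\|=O(\ell)$ gives excess $O(L\ell^2)=O(E)$ for the first packet.
\end{itemize}
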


\begin{proof}
The relay quadratic has joint Hessian norm
\(\sigma_{\rm rel}(1+a_{\rm rel}^2)=O(L)\).  For one product
\(\Theta\Delta f\), the nonconstant second-derivative groups are bounded by
\[
 \frac{E}{a_{\rm rel}^2\ell^2},\qquad
 \frac{\mu D_q}{a_{\rm rel}\ell},\qquad
 \mu,\label{eq:handle-relay-second}
\]
all of which are \(O(L)\).  The four third-derivative groups are
\[
 \frac{E}{a_{\rm rel}^3\ell^3},\qquad
 \frac{\mu D_q}{a_{\rm rel}^2\ell^2},\qquad
 \frac{\mu}{a_{\rm rel}\ell},\qquad
 \frac\mu{D_q}.\label{eq:handle-relay-third-groups}
\]
After division by
\(\rho=rL^2/(\kap^{3/2}\eps)\), their ratios are, up to universal
constants,
\[
 \frac{\kap^{3/2}}{a_{\rm rel}^3r},\qquad
 \frac\kap{a_{\rm rel}^2r},\qquad
 \frac{\sqrt\kap}{a_{\rm rel}r},\qquad
 \frac1r.\label{eq:handle-relay-ratios}
\]
If \(1\le r\le\kap^{3/2}\), the first is \(O(1)\) and the next two are
\(O(r^{-1/3})\) and \(O(r^{-2/3})\); if \(r\ge\kap^{3/2}\) all are at most a
universal constant.  The reward has only the pure-gate terms.  The block
interaction graph has bounded degree, proving the global smoothness and
Hessian-Lipschitz bounds.

It remains to check dual strong concavity without any directional
rank-one reinforcement.  For one pair \((s_t,y_{t+1})\), abbreviate
\[
 p=\nabla\Theta_t(s_t),\qquad
 g=\nabla_y\Delta f_{t+1}(x_{t+1},y_{t+1}),
 \qquad
 B=\Delta f_{t+1}-K_0E.
\]
Choose \(K_0\) so that \(-C_BE\le B\le-E\).  The corresponding block of the
negative dual Hessian is
\[
 \mathcal K=
 \begin{bmatrix}
 \sigma_{\rm rel} I-B\nabla^2\Theta_t&-pg^\top\\
 -gp^\top&\nabla^2h_{t+1}^{\Theta_t}
 \end{bmatrix}.\label{eq:handle-dual-block}
\]
The profile constants and \(\sigma_{\rm rel}\) make the upper-left block at least
\(\sigma_{\rm rel} I/2\).  The endpoint construction has a fixed strong-convexity
margin, say
\(\nabla^2h_{t+1}^{\Theta_t}\succeq C_h\mu I\).  By
\eqref{eq:handle-refined-difference},
\[
 \frac{4\norm{p}^2\norm{g}^2}{\sigma_{\rm rel}}
 \le C\frac{1}{a_{\rm rel}^2\ell^2}
       \frac{\mu^2D_q^2}{L/a_{\rm rel}^2}
 \le C'\mu.\label{eq:handle-young}
\]
Choose \(C_h>C'+2\).  Young's inequality then yields
\(\mathcal K\succeq\mu I\).  The pairs \((s_t,y_{t+1})\) are disjoint, so
the whole function is \(\mu\)-strongly concave in its dual variables.

Every endpoint difference and reward is uniformly \(O(E)\).  For a fixed
primal point \(\mathbf x\), evaluate the inner maximum at
\(s_t=a_{\rm rel}x_t\); then every relay quadratic vanishes.  Each
interpolated local value satisfies
\(\Phi_t^{\alpha}(x_t)\ge \Phi_t^0(x_t)-CE\ge-CE\), after the common
normalization, while each progress reward is bounded below by
\(-K_0E\).  Therefore \(\Phi_T(\mathbf x)\ge-CTE\) for every
\(\mathbf x\).  At the initialization used below all future gates are on
the zero plateau, every future inactive endpoint is evaluated at its
normalized minimizer, and the first active packet has excess value
\(O(E)\); hence \(\Phi_T(\mathbf x_0)\le CE\).  Together these estimates
prove \eqref{eq:handle-relay-gap}.
\end{proof}

\begin{lemma}[True stationarity opens every private-handle gate]
\label{lem:handle-relay-stationarity}
With the constants chosen above, every point satisfying
\(\norm{\nabla\Phi_T(\mathbf x)}\le\eps\) has
\[
 \Theta_t(s_t^\star(\mathbf x))=1,
 \qquad 1\le t<T,\label{eq:handle-all-open}
\]
and
\[
 \norm{x_t-x_t^\star}\le b_0\ell,
 \qquad 1\le t\le T.\label{eq:handle-output-close}
\]
\end{lemma}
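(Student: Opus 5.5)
We argue by contradiction and backward from the last packet.  By Danskin's theorem and strong concavity in \((\mathbf y,\mathbf s)\),
\[
 \nabla_{x_t}\Phi_T(\mathbf x)
 =\nabla_x f_t^{\alpha_t}(x_t,y_t^\star)
 +\sigma_{\rm rel}a_{\rm rel}\bigl(s_t^\star-a_{\rm rel}x_t\bigr),
\]
where \(\alpha_t=\Theta_{t-1}(s_{t-1}^\star)\), with \(\alpha_1=1\) since the first packet is always active.  This uses that \(\Delta f_t\) is independent of \(x_t\), so the gates enter \(\nabla_x\) only through the interpolated packet.  The first-order condition in \(s_t\) reads
\[
 \sigma_{\rm rel}(s_t^\star-a_{\rm rel}x_t)
 =\nabla\Theta_t(s_t^\star)\bigl(\Delta f_{t+1}(x_{t+1},y_{t+1}^\star)-K_0E\bigr).
\]
The gradient therefore splits into a packet term plus a relay term.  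The relay term has size at most \(a_{\rm rel}\norm{\nabla\Theta_t}C_BE\lesssim a_{\rm rel}E/(a_{\rm rel}\ell)=E/\ell\asymp\eps\), and it vanishes exactly whenever \(s_t^\star\) lies on a plateau of \(\vartheta\).

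The plan proceeds in three steps.

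\textbf{Step 1: locate \(s_t^\star\).}  We show that \(s_t^\star\) stays within \(O(E/(\sigma_{\rm rel}a_{\rm rel}\ell))\) of \(a_{\rm rel}x_t\).  With \(\sigma_{\rm rel}=c_\sigma L/a_{\rm rel}^2\), this distance is a small multiple of \(a_{\rm rel}\ell\) once \(c_\sigma\) is large.  Hence the gate is essentially evaluated at \(a_{\rm rel}x_t\), i.e.\ at scale \(\ell\) around \(x_t^\star\).

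\textbf{Step 2: the packet term.}  By the Fenchel transfer (Lemma~\ref{lem:fenchel-distance-transfer}), the active value function \(\Phi_t^1\) is \(\beta\)-strongly convex with minimizer \(x_t^\star\).  Its gradient at \(x_t\) therefore has norm at least \(\beta\norm{x_t-x_t^\star}\); note \(\beta\ell=\lambda D_q\asymp K_D\eps\).  For an intermediate \(\alpha\), the interpolant \(f^0+\alpha\Delta f\) differs from the endpoints only through the bounded handle seed.  Its gradient in \(x\) equals \(\lambda(y_x-w)\), and \(\nabla\Delta f\) is \(O(\eps/\sqrt\kap)\), so the perturbation is small.

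\textbf{Step 3: backward induction on \(t=T,T-1,\dots\).}  For \(t=T\) there is no outgoing relay term.  Stationarity then forces \(\norm{\nabla_x f_T^{\alpha_T}}\le\eps\).

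If \(\alpha_T\) were far from \(1\), the packet \(y_T^\star\) would sit near the inactive minimizer \(0\), at distance \(\ge cD_q\) from \(y^\star_{\mathcal V}\).  We need the interpolated endpoints to keep this separation, which we check via strong convexity of \(H^0+\alpha d_{\mathcal V}\) and the seed size \(s_q\asymp\nu D_q\).  The Fenchel distance transfer \eqref{eq:fenchel-distance-core} then produces a gradient of order \(\lambda D_q\gg\eps\) when \(K_D\) is large, a contradiction.  Hence \(\alpha_T\) is close to \(1\).

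A sharper version is required, since the gate is exactly \(1\) on its inner plateau.  We argue that the previous gate value is determined by \(s_{T-1}^\star\).  At packet \(T-1\), the relay force is of size \(\eps\) times a profile factor.  Its sign pushes \(s_{T-1}\) toward the gate center, because \(B\le-E<0\) makes the reward a well around \(a_{\rm rel}x_{T-1}^\star\).  Combining this with stationarity in \(x_{T-1}\) pins \(x_{T-1}\) within \(b_0\ell\) of \(x_{T-1}^\star\).  This places \(a_{\rm rel}x_{T-1}\), and by Step 1 also \(s_{T-1}^\star\), inside the inner ball, so \(\Theta_{T-1}=1\) exactly.  Iterating downward yields \eqref{eq:handle-all-open} and \eqref{eq:handle-output-close}.

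\textbf{Main obstacle.}  The hard part is the quantitative force balance in Step 3.  One must show that a partially open gate cannot coexist with small total gradient: the relay term (\(\asymp K_0E/\ell\) times \(\vartheta'\)) must not cancel the packet term, and there must be no spurious stationary point on the transition annulus of \(\vartheta\).

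I would try first to choose \(K_D\) large relative to \(K_0\).  The packet force \(\beta\norm{x_t-x_t^\star}\) away from the inner ball then exceeds \(\beta b_0\ell\asymp b_0K_D\eps\), which dominates the relay force \(\le C K_0\eps\) and \(\eps\) itself.  The gate inner radius must be set to match \(b_0\).  This is exactly the ``fixed margin above \(\eps\)'' reserved when \(C_a\) and \(K_D\) were chosen in Lemma~\ref{lem:private-constant-hierarchy}.
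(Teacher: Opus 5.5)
\paragraph{Gap 1: the induction runs backward, and its base case fails.}
Stationarity in $x_T$ only says $\norm{\nabla\Phi_T^{\alpha_T}(x_T)}\le\eps$, i.e.\ that $x_T$ lies near the minimizer of the \emph{partially activated} value function. The Fenchel transfer (Lemma~\ref{lem:fenchel-distance-transfer}) bounds this gradient below by the distance to the minimizer of the interpolated $H^{\alpha_T}$, not of the active $H^1$. So nothing is contradicted when $\alpha_T<1$. The same circularity returns at packet $T-1$: you use strong convexity of $\Phi^1_{T-1}$ about $x_{T-1}^\star$ without knowing $\alpha_{T-1}=1$. Your Step~2 claim that the interpolation perturbation is small is also wrong. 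Strong concavity moves the inner maximizer by up to order $(1-\alpha)D_q$, hence the $x$-gradient by order $(1-\alpha)\lambda D_q\gg\eps$, which is exactly what your Step~3 relies on. The information flows forward, because packet $1$ is always active. The paper lets $t$ be the \emph{first} gate not on its unit plateau. Then $\Theta_{t-1}=1$ exactly, so $\nabla_{x_t}\Phi_T=\nabla\Phi_t^1(x_t)+\sigma_{\rm rel}a_{\rm rel}(s_t^\star-a_{\rm rel}x_t)$ with $\Phi_t^1$ $\beta$-strongly convex about $x_t^\star$. No intermediate ``$\alpha$ close to $1$'' statement is needed.

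\paragraph{Gap 2: magnitude domination cannot resolve the cancellation obstacle.}
Taking $K_D$ large relative to $K_0$ is not available. The reward depth $K_0E$ must exceed the endpoint difference to keep $B\le-E$, and that difference has size $E_q\asymp\nu D_q^2$, which scales like $K_D^2E$. Since $\ell\propto K_D$, the relay force $\asymp K_0E/\ell$ and the packet force $\asymp\beta\ell$ both grow linearly in $K_D$, so neither dominates.

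The missing idea is alignment, not domination. Put $c_t=a_{\rm rel}x_t^\star$ and $r_t=s_t^\star-c_t$. Radial monotonicity of the gate gives $\nabla\Theta_t(s_t^\star)=-\eta_tr_t$ with $\eta_t\ge0$. Because $B_t<0$, the relay equation becomes $s_t^\star-a_{\rm rel}x_t=(|B_t|\eta_t/\sigma_{\rm rel})r_t$; note that $s_t^\star$ is pushed \emph{away} from the gate center, not toward it. Hence $a_{\rm rel}(x_t-x_t^\star)=(1-\delta_t)r_t$ with $\delta_t\le1/2$, which is your Step~1. Both gradient terms then have nonnegative inner product with $r_t$. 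Off the plateau this gives $\norm{\nabla_{x_t}\Phi_T}\ge\beta\norm{x_t-x_t^\star}\ge\beta c_0\ell/2>\eps$, whatever the size of the relay force. On the plateau, $\nabla\Theta_t=0$ forces $s_t^\star=a_{\rm rel}x_t$, and the plateau radius gives $\norm{x_t-x_t^\star}\le b_0\ell$. Packet $T$ has no outgoing gate and follows directly from strong convexity of $\Phi_T^1$.
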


\begin{proof}
Let \(t\) be the first gate not on its unit plateau.  Packet \(t\) is active.
For \(t<T\), Danskin's theorem gives a scalar
\(B_t\in[-C_BE,-E]\) and the exact inner relay equation
\[
 \sigma_{\rm rel}(s_t^\star-a_{\rm rel}x_t)
 =B_t\nabla\Theta_t(s_t^\star).\label{eq:handle-relay-foc}
\]
Put \(c_t=a_{\rm rel}x_t^\star\) and
\(r_t=s_t^\star-c_t\).  Radial monotonicity gives
\(\nabla\Theta_t=-\eta_tr_t\) with
\(0\le\eta_t\le C/(a_{\rm rel}^2\ell^2)\).  Thus
\[
 a_{\rm rel}(x_t-x_t^\star)=(1-\delta_t)r_t,
 \qquad
 \delta_t=|B_t|\eta_t/\sigma_{\rm rel}\le\frac12.\label{eq:handle-collinearity}
\]
The outer block gradient is
\[
 \nabla_{x_t}\Phi_T
 =\nabla\Phi_t^1(x_t)
 +\sigma_{\rm rel} a_{\rm rel}(s_t^\star-a_{\rm rel}x_t).\label{eq:handle-outer-gradient}
\]
Both terms have nonnegative projection on \(r_t\): the second by
\eqref{eq:handle-relay-foc}, and the first by \(\beta\)-strong convexity of
\(\Phi_t^1\) and \eqref{eq:handle-collinearity}.  Outside the unit plateau,
\(\norm{r_t}\ge c_0a_{\rm rel}\ell\), so that this projection is at least
\(c\beta\ell>\eps\), after the fixed scaling in \(D_q\).  This contradicts
stationarity.  The last packet has \(s_T^\star=a_{\rm rel}x_T\) and is
handled directly by strong convexity.  On every unit plateau the gate
gradient vanishes, so \eqref{eq:handle-relay-foc} gives
\(s_t^\star=a_{\rm rel}x_t\) and the plateau radius yields
\eqref{eq:handle-output-close}.
\end{proof}

\subsection{Serial transcript and the completed randomized lower branch}
Before packet
\(t\) is activated, the complete inactive transcript contains the public
amplifier but no selected handle \(h_t\).  Once activation occurs, the
seed gradient may reveal \(h_t\) immediately; we grant it for free.  The
public map then reveals only \(v_{t,1}=C_th_t\).  By
Lemma~\ref{lem:amplifier-spectrum}, all powers of the public Hessian stay
in the two-dimensional handle--first-coordinate space.  The tail
\(v_{t,2},v_{t,3},\ldots\) can still advance only through the exactly-flat
edge chain, one direction per response.

\subsection{A fixed-instance probability space and packetwise virtual oracles}
The remaining issue is not a new geometric estimate but a quantifier
problem.  We therefore spell out the probability space and the virtual
filtrations used in the serial argument.  For this lower bound we work
with a strictly stronger augmented oracle: after a packet is activated,
each response may additionally disclose one next tail direction for
free.  Any algorithm for the original bundled oracle is a special case
that ignores this side information, so hardness for the augmented model
implies hardness for the original model.

Fix a randomized algorithm and write its private random seed as \(\xi\).
Conditioned on \(\xi\), the algorithm is deterministic: its \(n\)th query
and its final output are Borel functions of the preceding oracle
responses.  Independently of \(\xi\), for every packet \(t\in[T]\) sample
\[
 \Omega_t=(h_t,V_t),
 \qquad
 h_t\sim {\rm Haar}(\mathbb S(\mathsf A_t)),
 \qquad
 V_t=(v_{t,2},\ldots,v_{t,m})
\]
where, conditionally on \(h_t\), the tail frame is Haar-uniform in the
prescribed copy of \(\mathsf W_t\) and is independent of all other
packets.  Put \(v_{t,1}=C_th_t\).  The joint law is the product law
\[
 \mathbb P=\mathbb P_\xi\otimes\bigotimes_{t=1}^T
 \mathbb P_{\Omega_t}.\label{eq:product-probability-space}
\]
All oracle responses below include value, full gradient, and full joint
Hessian.

For a fixed packet \(t\), let
\[
 \mathcal G_t:=\sigma\!\left(\xi,(\Omega_s)_{s\ne t}\right).\label{eq:packet-side-sigma}
\]
A \(t\)-suppressed virtual oracle is defined recursively as follows.
Before packet \(t\) is activated, it returns only the public amplifier,
the public shell, and all terms from packets \(s\ne t\); the selected
handle, the bounded seed, every hidden edge of packet \(t\), and every
relay gate or reward centered at \(x_t^\star\) are replaced by their
exactly-flat zero jets.  After activation the packet-\(t\) relay gate is
still suppressed until its first virtual chamber hit.  At activation we
grant the handle \(h_t\) for free, so \(v_{t,1}=C_th_t\) becomes public.  If
the first \(k\) path directions \(v_{t,1},\ldots,v_{t,k}\) have been
disclosed, the response contains the exact jets of all edges internal to
that prefix and of the single boundary edge \((v_{t,k},v_{t,k+1})\), and
suppresses every later edge.  After the response, the protocol grants
\(v_{t,k+1}\) in full whether or not the boundary-edge jet was nonzero.
This stronger oracle discloses at most one fresh tail column per response
and, crucially, never conditions future disclosure on a ``no-hit'' event.

Let \(K_{t,n}\) be the number of disclosed path directions immediately
before round \(n\) in this virtual experiment, with \(K_{t,n}=0\) before
activation and \(K_{t,n}=1\) just after the handle is disclosed.  After
activation it increases by one after each response, capped at \(m\); this
schedule is deliberately independent of the fresh column's value.  Let
\(\widetilde R_{t,j}\) denote the virtual response at round \(j\).  The
pre-query virtual filtration is
\[
 \mathcal F^-_{t,n}:=
 \sigma\!\left(
  \mathcal G_t,
  \mathbf 1_{\{\widehat\tau_t<n\}}h_t,
  \mathbf 1_{\{K_{t,n}\ge i\}}v_{t,i}\ (2\le i\le m),
  \widetilde R_{t,1},\ldots,\widetilde R_{t,n-1}
 \right),\label{eq:packet-virtual-filtration}
\]
where \(\widehat\tau_t\) is the virtual activation time defined below.
By construction, \(\widehat\tau_t\wedge n\) is a stopping time for this
filtration, while the post-activation evolution of \(K_{t,n}\) is a
predictable function of the activation time and the round count.  Every
variable in \(\mathcal F^-_{t,n}\) is measurable with respect to
\(\mathcal G_t\) and the disclosed columns only.
Consequently, conditioned on \(\mathcal F^-_{t,n}\), the undisclosed
handle (before activation) and undisclosed tail columns retain their
Haar conditional laws in the appropriate orthogonal complements.

Run the same deterministic algorithm (after conditioning on \(\xi\))
against the \(t\)-suppressed virtual oracle.  At round \(n\) form the finite
candidate list \(\mathcal C_{t,n}\) from the packet-\(t\) components of
this virtual query.  If
\(w_{t,n}:=-\beta x_{t,n}/\lambda\) and
\(\bar r_{t,n}:=-(\beta/\lambda)s_{t,n}/a_{\rm rel}\), include, in the
corresponding hidden spaces,
\[
 \begin{gathered}
 a_{t,n},\ C_t^*b_{t,n},\ u_{t,n},\\
 \Pi_{\mathsf A_t}w_{t,n},\ C_t^*\Pi_{\mathsf B_t}w_{t,n},\
 \Pi_{\mathsf W_t}w_{t,n},\\
 \Pi_{\mathsf A_t}\bar r_{t,n},\ C_t^*\Pi_{\mathsf B_t}\bar r_{t,n},\
 \Pi_{\mathsf W_t}\bar r_{t,n}.
 \end{gathered}\label{eq:packet-candidate-list}
\]
The boundedness reduction is explicit.  Fix a public
\(R_{\rm clip}=C_{\rm clip}D_q\) with
\(C_{\rm clip}\ge 2C_R\), and large enough to exceed four times every
canonical packet-center norm and every packet chamber radius.  For a raw candidate
\(z\), retain \(z\) when \(\norm z\le R_{\rm clip}\) and otherwise replace it
by zero in the concentration list.  This replacement loses no possible
first hit: a raw packet-coordinate candidate with norm above
\(R_{\rm clip}\) is deterministically outside the \(O(D_q)\) chamber around
the hidden minimizer; a query whose packet dual block lies outside the
outer shell has no hidden packet jet at all; and the compactly supported
relay gate is zero for a mapped relay candidate outside the same public
radius.  Thus every candidate to which concentration is applied is
measurable and has norm at most \(R_{\rm clip}=O(D_q)\), while every
discarded candidate is already certified harmless without using its
orientation.

If packet \(t\) has not been activated by the terminal round, the similarly
decomposed and clipped final output is measurable with respect to its
suppressed transcript and is adjoined as a terminal candidate.  After
activation the final output is handled by the same adaptive list at
round \(N+1\).

\begin{lemma}[Packetwise real--virtual coincidence]
\label{lem:packet-real-virtual}
Fix \(t\) and condition on \(\mathcal G_t\).  There is an event
\(\mathcal E_t\) satisfying
\[
 \mathbb P(\mathcal E_t^c\mid\mathcal G_t)\le\zeta
 \quad\text{a.s.}\label{eq:packet-good-probability}
\]
provided
\[
 d_t-m\ge Cq^3
 \log\!\left(\frac{C(N+2)q}{\zeta}\right),\label{eq:packet-dimension-v12}
\]
such that the following statements hold simultaneously.
\begin{enumerate}[label=(\roman*)]
\item Before activation, the real and \(t\)-suppressed virtual responses
coincide at every round, and no query lies in the packet chamber
\(B(x_t^\star,b_0\ell)\) or the relay chamber
\(B(a_{\rm rel}x_t^\star,b_1a_{\rm rel}\ell)\).  If activation never
occurs, the same statement holds for the terminal candidate.
\item At activation the only newly frame-dependent object in the full
response is the handle \(h_t\) (equivalently \(v_{t,1}\)); no tail column is
revealed.
\item After activation, if \(K_{t,n}=k\), the real response is simulated by
the stronger virtual response that includes the one boundary edge and
then grants \(v_{t,k+1}\) for free.  Thus one response advances the
disclosed prefix by at most one.  Before \(cq\) post-activation responses,
no query, relay candidate, or terminal candidate enters the
corresponding chamber.
\item The conclusions remain valid for arbitrary activation weights in
\([0,1]\), all product-rule cross-Hessian blocks, and all public-shell
transition terms.
\end{enumerate}
\end{lemma}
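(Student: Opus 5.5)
The plan is to define \(\mathcal E_t\) as the complement of a single concentration failure event and then show, round by round, that on \(\mathcal E_t\) the real and \(t\)-suppressed virtual responses agree. Fix \(\mathcal G_t\). The virtual experiment is a deterministic function of \(\mathcal G_t\) and the disclosed columns. By construction of \eqref{eq:packet-virtual-filtration}, it satisfies the predictable-disclosure hypothesis of Lemma~\ref{lem:adaptive-haar-side}: the handle is disclosed exactly at \(\widehat\tau_t\), and afterwards one tail column is disclosed per round on a schedule independent of the column's value. The clipped candidate list \eqref{eq:packet-candidate-list} has \(J=O(1)\) entries per round, each \(\mathcal F^-_{t,n}\)-measurable with norm at most \(R_{\rm clip}=O(D_q)\), plus the terminal candidate. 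I would apply Lemma~\ref{lem:adaptive-haar-side} with threshold \(s=c\,\delta_q/q\asymp D_q/q^{5/2}\). This \(s\) is small enough that every edge argument \(\langle v_i-v_{i+1},\cdot\rangle\) involving an undisclosed column stays in the flat core \(|t|\le\delta_q/2\); the \(1/q\) slack absorbs sums over \(O(q)\) edges and cross terms. Then \(R^2/s^2\asymp q^5\) would overshoot \eqref{eq:packet-dimension-v12}, so I would instead take \(s\asymp\delta_q\) and exploit that each edge involves only two columns. This gives \(R^2/s^2\asymp q^3\), matching the stated dimension requirement with \(m=O(q)\) and \(N+2\) rounds. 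Define \(\mathcal E_t\) as the event that no undisclosed column has inner product above \(s\) with any candidate. The lemma then yields \eqref{eq:packet-good-probability}.

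Next I would verify (i)--(iii) by induction on rounds on \(\mathcal E_t\). Before activation, each query's packet components have inner product at most \(s\) with \(h_t\) and all tail columns. Hence every hidden edge lies in its flat core, the bounded seed is absent at the inactive endpoint (Lemma~\ref{lem:handle-paired-shell}(iv)), and the gate \(\Theta_{t-1}\) multiplying \(\Delta f_t\) determines activation, so the real jet equals the virtual one. For the chamber claims, I would note that \(x_t^\star=-\lambda y^\star_{\mathcal V}/\beta\) has a component \(\ge cD_q\) along \(\mathrm{span}\{v_2,\dots,v_{c_0q}\}\) by \eqref{eq:handle-bulk}. A candidate whose projection onto undisclosed columns is tiny is therefore at distance \(\gtrsim\ell\) from \(x_t^\star\) once \(b_0,b_1\) are small. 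The same argument applies through the maps \(w_{t,n},\bar r_{t,n}\) for the relay chamber. For (ii), the difference \(\Delta f_t\) depends on the frame only through \(h_t\) by Lemma~\ref{lem:handle-fenchel-packet}, and the public amplifier's Krylov closure (Lemma~\ref{lem:amplifier-spectrum}) adds only \(v_{t,1}\). For (iii), with \(k\) columns disclosed, the edges beyond the boundary edge are flat on \(\mathcal E_t\). So the real response is a function of the virtual response, which includes the boundary edge and grants \(v_{t,k+1}\). After fewer than \(cq\) responses, at least the columns \(v_{cq+1},\dots,v_{c_0q}\) remain undisclosed, which still carry a bulk \(\gtrsim D_q\) for \(c<c_0/2\). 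This keeps queries out of the chambers, as before.

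For (iv), the argument above never used the gate value at its specific point. Any activation weight \(\Theta\in[0,1]\) multiplies a difference whose frame dependence is only through \(h_t\) and flat edges, so it is covered. The product-rule cross blocks \(\nabla\Theta\,\nabla\Delta f^\top\) and the shell cutoff derivatives are compositions of public functions with the same flat jets, so they also vanish on \(\mathcal E_t\). The unbounded case is handled by the clipping argument already given.

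The main obstacle is the measurability bookkeeping around activation. The activation time \(\widehat\tau_t\) depends on the real transcript, which must coincide with the virtual one \emph{before} the concentration event has been established. I would resolve this by defining everything virtually first and applying concentration to the virtual candidates, for which \(\widehat\tau_t\wedge n\) is a stopping time. Only afterwards would I prove coincidence inductively on \(\mathcal E_t\), so that no conditioning on a no-hit event ever enters.
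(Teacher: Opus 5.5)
Your proposal follows the paper's proof essentially step for step. It uses adaptive Haar concentration on the clipped candidate list with \(R\asymp D_q\) and flat-core threshold \(s\asymp\delta_q\asymp D_q/q^{3/2}\), so \(R^2/s^2=O(q^3)\); the one-fresh-column induction with the boundary edge and the amplifier's Krylov closure; the bulk-coordinate separation for the packet and relay chambers; and the product-rule argument for (iv).

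One small correction concerns (iv). After activation the cross blocks \(\nabla\Theta_{t-1}\,\nabla\Delta f_t^{\top}\) need not vanish. What the argument needs, and what the paper uses, is that \(\Delta f_t\) depends on the frame only through the already-disclosed handle \(h_t\), so these terms are measurable with respect to the disclosed data and reveal no tail column.
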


\begin{proof}
We give the induction because it is the point at which a posterior-Haar
argument can otherwise become circular.  Before activation, apply
Lemma~\ref{lem:adaptive-haar-side} to the candidates
\eqref{eq:packet-candidate-list}, first with the undisclosed handle and
then with all undisclosed tail columns.  Here \(R=CD_q\) and the
exactly-flat width is \(s=cD_q/q^{3/2}\), so
\(R^2/s^2=O(q^3)\) and \eqref{eq:packet-dimension-v12} makes the total
conditional failure probability at most \(\zeta\).  On the resulting
event, every hidden handle argument and every hidden edge argument lies
strictly inside its flat core.  Hence their value, gradient, and Hessian
are all exactly zero.  Outside the truncation ball the common public
shell removes every hidden term.  Thus the real and virtual responses
are identical, proving the induction base and the first assertion.

The bounded active--inactive endpoint difference depends on the private
frame only through the handle.  Therefore at activation it may reveal
\(h_t\), which we grant for free, but it cannot reveal any tail column.
Conditioned on \(\mathcal F^-_{t,n}\) after this disclosure, the remaining
tail frame is still Haar in the orthogonal complement of the disclosed
columns.

Assume inductively that \(v_{t,1},\ldots,v_{t,k}\) have been disclosed and
that the real transcript is simulated by the stronger virtual
transcript before round \(n\).  The algorithm's query is then
\(\mathcal F^-_{t,n}\)-measurable and is selected before
\(v_{t,k+1}\) is disclosed.  Apply the conditional spherical bound to all
undisclosed columns and all candidates in
\eqref{eq:packet-candidate-list}.  Every edge strictly beyond the
boundary edge remains in its exactly-flat core.  The virtual response
includes the exact boundary-edge jet, whose range is contained in the
span of the disclosed prefix and the single fresh column
\(v_{t,k+1}\); it then grants that column in full even when this jet is
zero.  Hence the response depends on at most one fresh tail column and
cannot skip a column, while the next pre-query history contains the
fresh column explicitly rather than negative information about it.  The
public amplifier has the two-dimensional Krylov closure of
Lemma~\ref{lem:amplifier-spectrum} and contains no tail direction.

Partial activation and relay differentiation introduce only products of
jets of the endpoint difference with jets of the gate.  The endpoint
difference contains the handle but no tail direction, while every gate
is a public radial function of a candidate in
\eqref{eq:packet-candidate-list}.  Hence these product-rule tensors are
measurable with respect to the disclosed filtration and cannot disclose
an additional tail column.  The same statement holds on the shell
annulus because the cutoff and collar are public radial functions.
This proves response-by-response simulation by the stronger oracle and
the one-column increment.  Since the virtual oracle only gives the
algorithm additional information, a lower bound for it is also a lower
bound for the original bundled oracle.

Finally, the active minimizer has an \(\Omega(D_q)\) component on a
constant fraction of the first \(q\) tail directions by
\eqref{eq:handle-bulk}.  Before \(cq\) responses, a constant fraction of
those columns remains undisclosed.  The concentration event gives
\(o(D_q/\sqrt q)\) projection of every candidate on each undisclosed
column.  Therefore
\[
 \|w-y_t^\star\|^2
 \ge\sum_{i\in I_{\rm unrev}}
 \left(|\langle v_{t,i},y_t^\star\rangle|
       -|\langle v_{t,i},w\rangle|\right)^2
 \ge cD_q^2.\label{eq:packet-bulk-separation-v12}
\]
The public Fenchel scaling \(x\mapsto-\beta x/\lambda\) converts
this into distance \(c\ell\) from \(x_t^\star\).  The same scaling applied
to \(s_t/a_{\rm rel}\) converts the relay chamber exactly into the packet
coordinate test used above.  Taking \(b_0,b_1\) below this universal separation proves
all chamber claims, including the appended unqueried output.
\end{proof}

\begin{lemma}[Uniform conditional first-hit increment]
\label{lem:private-handle-first-hit}
For packet \(t\), let \(\tau_t\) be its query activation time, with
\(\tau_1=0\) and, for \(t\ge2\), \(\tau_t\) the first query round at which
\(s_{t-1}/a_{\rm rel}\) enters the relay chamber of packet \(t-1\); put
\(\tau_t=\infty\) if no such query exists.  Let \(\widehat\tau_t\) be the
corresponding time in the packet-suppressed virtual transcript.
Conditionally on \(\mathcal G_t\), and uniformly over every value of the
random time \(\widehat\tau_t\), define \(\mathcal B_t\) to be the union
of the following three events: real and virtual activation disagree
before the first hit; \(\tau_{t+1}<\tau_t+cq-C_0\); or the terminal
output hits packet \(t\) before \(\tau_t+cq-C_0\).  Then
\[
 \mathbb P(\mathcal B_t\mid\mathcal G_t)\le\zeta.\label{eq:conditional-increment-bound}
\]
For \(t=T\), interpret \(\tau_{T+1}=N+1\) when the terminal output lies in
its packet chamber and \(\tau_{T+1}=\infty\) otherwise.
\end{lemma}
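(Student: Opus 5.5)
The plan is to reduce the statement to the packetwise real--virtual coincidence of Lemma~\ref{lem:packet-real-virtual}, applied conditionally on \(\mathcal G_t\), and to set \(\mathcal B_t\subseteq\mathcal E_t^c\). The point is that \(\mathcal G_t=\sigma(\xi,(\Omega_s)_{s\ne t})\) fixes the algorithm's seed and every other packet's frame. Under this conditioning, the algorithm run against the \(t\)-suppressed virtual oracle is a deterministic adaptive procedure. Its only remaining randomness is \(\Omega_t=(h_t,V_t)\), which is still Haar by the product structure~\eqref{eq:product-probability-space}. The virtual activation time \(\widehat\tau_t\) is a stopping time for \(\mathcal F^-_{t,n}\). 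I will therefore not condition on its value directly. Instead I will apply Lemma~\ref{lem:adaptive-haar-side} to the whole virtual run, with candidate lists indexed by round. This gives a bound that holds simultaneously for every realization of \(\widehat\tau_t\), and that is what ``uniformly'' should mean in the statement.

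\textbf{Step 1 (pre-activation agreement).} By Lemma~\ref{lem:packet-real-virtual}(i), on \(\mathcal E_t\) the real and virtual responses coincide up to activation. This needs an induction over packets \(s<t\): the responses from packets \(s\ne t\) are identical in both experiments, because they are \(\mathcal G_t\)-measurable functions of the same queries. Hence the real activation time \(\tau_t\), which is the first relay-chamber hit of packet \(t-1\), equals \(\widehat\tau_t\). This disposes of the first event in \(\mathcal B_t\).

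\textbf{Step 2 (post-activation delay).} After activation, Lemma~\ref{lem:packet-real-virtual}(ii)--(iii) applies on \(\mathcal E_t\). The handle is granted at \(\widehat\tau_t\), and each response then adds at most one tail column. So before \(cq\) post-activation responses, a constant fraction of the first \(\lfloor c_0q\rfloor\) tail directions remains undisclosed. The bulk separation~\eqref{eq:packet-bulk-separation-v12} then keeps every query, and every mapped relay candidate \(\bar r_{t,n}\), outside the packet and relay chambers.

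Now \(\tau_{t+1}\) is defined as the first entry of \(s_t/a_{\rm rel}\) into the relay chamber of packet \(t\). The Fenchel scaling identifies this chamber with the packet-coordinate test applied to the candidate \(\bar r_{t,n}\), so \(\tau_{t+1}\ge\tau_t+cq\) on \(\mathcal E_t\). The additive \(C_0\) absorbs two things: the activation round itself, and the off-by-one between the disclosure schedule \(K_{t,n}\) and the query index.

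The terminal-output clause is handled the same way. If packet \(t\) is activated, the output is the round-\(N+1\) candidate. Otherwise it is the terminal candidate adjoined in the pre-activation list.

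\textbf{Step 3 (probability).} The events in \(\mathcal B_t\) are all contained in \(\mathcal E_t^c\), so~\eqref{eq:packet-good-probability} gives \(\mathbb P(\mathcal B_t\mid\mathcal G_t)\le\zeta\) under the dimension condition~\eqref{eq:packet-dimension-v12}. For \(t=T\) there is no next gate. The convention \(\tau_{T+1}=N+1\) merges the second and third events, and Step~2 applies to the output alone.

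\textbf{Main obstacle.} The hardest step is making sure the conditional Haar law really survives conditioning at the random time \(\widehat\tau_t\). The danger is that the decision ``activation has not yet happened'' could encode no-hit information about \(h_t\). I would handle this by working only with the virtual filtration~\eqref{eq:packet-virtual-filtration}. Before activation the virtual response never depends on \(h_t\), so the event \(\{\widehat\tau_t<n\}\) is a function of \(\mathcal G_t\) and public data alone. In particular it involves no undisclosed column, which is exactly the hypothesis Lemma~\ref{lem:adaptive-haar-side} requires. Transferring back to the real run is then legitimate only on \(\mathcal E_t\), via Step~1, and this ordering must be kept so that the argument does not become circular.
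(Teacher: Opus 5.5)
Your proposal is correct and follows essentially the paper's route. Both arguments place $\mathcal B_t$ inside the failure event of Lemma~\ref{lem:packet-real-virtual}, use part~(i) to obtain $\tau_t=\widehat\tau_t$ from pre-activation transcript coincidence, and use the one-column post-activation increment to get the $cq-C_0$ delay for both the next relay hit and a terminal hit. The only difference is technical: the paper conditions on the stopped virtual history $\mathcal F^-_{t,\widehat\tau_t}$ and invokes optional sampling, whereas you take the round-indexed union bound over the whole virtual run, which is legitimate and in fact simpler because $\{\widehat\tau_t<n\}$ is determined by $\mathcal G_t$ and public data, so $\widehat\tau_t$ is already fixed once you condition on $\mathcal G_t$.
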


\begin{proof}
The truncated virtual activation time
\(\widehat\tau_t\wedge(N+1)\) is a bounded stopping time for
\eqref{eq:packet-virtual-filtration} and is measurable without using an
undisclosed variable of packet \(t\).  Use a regular conditional
distribution given the side sigma-field and then condition further on
the stopped virtual history
\(\mathcal F^-_{t,\widehat\tau_t}\).  The estimate in
Lemma~\ref{lem:packet-real-virtual} is uniform in every realization of
that history; the optional-sampling identity for bounded stopping times
and the tower property therefore preserve the same failure bound after
averaging over the random activation time.  On the good event, the original transcript is simulated by the stronger
virtual transcript before the first hit, and the activation rule is the
same in both experiments, so \(\tau_t=\widehat\tau_t\).  The post-activation part of the same lemma
then prohibits both the next relay hit and a terminal packet hit for
\(cq-C_0\) further responses.  No independence assumption is used.
\end{proof}

\begin{theorem}[Fixed-instance serial product theorem]
\label{thm:private-handle-product}
Let \(T\ge1\) and \(q\ge q_0\), and let the \(T\) private-handle packets be
combined by \eqref{eq:handle-relay-family}.  Fix a target budget
\(N_0\) and choose every ambient block dimension so that
\eqref{eq:packet-dimension-v12} holds with \(N=N_0\) and
\(\zeta=(3T)^{-1}\).  For every randomized algorithm making
\(N\le N_0\) bundled queries, there is a deterministic choice of all
packet frames,
independent of the realized algorithmic seed, such that
\[
 N<cTq
 \quad\Longrightarrow\quad
 \mathbb P_\xi\{\|\nabla\Phi_T(\widehat{\mathbf x})\|>\eps\}
 \ge\frac23 .\label{eq:fixed-instance-product-lower}
\]
\end{theorem}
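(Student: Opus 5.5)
The plan is to average over the product probability space \eqref{eq:product-probability-space} and derandomize at the end. First I would show that the joint failure probability, taken over both \(\xi\) and the frames, is at least \(1-T\zeta\ge 2/3\). Once that holds, Fubini gives a deterministic frame realization \((\Omega_t)_{t\le T}\) on which the conditional failure probability over \(\xi\) alone is at least \(2/3\). Since the frames are sampled independently of \(\xi\), this realization is chosen independently of the realized seed. That is the required quantifier order.

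For the averaged bound I would argue as follows. Let \(\mathcal B=\bigcup_{t=1}^T\mathcal B_t\), with \(\mathcal B_t\) from Lemma~\ref{lem:private-handle-first-hit}. Taking expectations of \eqref{eq:conditional-increment-bound} over \(\mathcal G_t\) gives \(\mathbb P(\mathcal B_t)\le\zeta\) unconditionally, so a union bound gives \(\mathbb P(\mathcal B)\le T\zeta=1/3\). On \(\mathcal B^c\) I would run an induction on \(t\): \(\tau_1=0\), and each step satisfies \(\tau_{t+1}\ge\tau_t+cq-C_0\). This yields \(\tau_{T+1}\ge T(cq-C_0)\ge Tcq/2\), using \(q\ge q_0\) with \(q_0\) chosen so that \(C_0\le cq_0/2\). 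If \(N<cTq/2\) (rename \(c\)), then some packet \(t\) is still not completed at the terminal round. Concretely, either the terminal output misses the packet-\(t\) chamber \(B(x_t^\star,b_0\ell)\), or, for \(t<T\), the relay variable misses the relay chamber, so that gate \(t\) is not on its unit plateau. In both cases Lemma~\ref{lem:handle-relay-stationarity} fails at \(\widehat{\mathbf x}\): either \eqref{eq:handle-output-close} or \eqref{eq:handle-all-open} is violated. Hence \(\|\nabla\Phi_T(\widehat{\mathbf x})\|>\eps\) on \(\mathcal B^c\).

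One point needs care. Lemma~\ref{lem:handle-relay-stationarity} concerns the exact inner maximizer \(s^\star_t(\widehat{\mathbf x})\), not a queried \(s\). I would therefore use the convention stated in the packet candidate list: the terminal output is appended at round \(N+1\), and its packet and relay images \(w\) and \(\bar r\) are included as candidates. Since \(s_t^\star\) is determined by \(\widehat{\mathbf x}\) through \eqref{eq:handle-relay-foc}, the collinearity \eqref{eq:handle-collinearity} shows that an open gate forces \(x_t\) into the packet chamber. So it suffices that the terminal \(x_t\) misses \(B(x_t^\star,b_0\ell)\), which is exactly what the event \(\mathcal B_t^c\) guarantees.

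The main obstacle is the induction across packets. The events \(\mathcal B_t\) are defined through different \(t\)-suppressed virtual experiments, and the real activation time \(\tau_t\) depends on all earlier packets. I must check that the conclusion of \(\mathcal B_t^c\), namely that real and virtual activation agree and that there is a delay of \(cq-C_0\), is a statement about the real transcript alone. That is what allows the events for different \(t\) to be chained without any independence assumption. I would try first to verify that each \(\mathcal B_t\) is phrased in terms of real times together with one virtual coupling. Once that holds, the union bound needs no joint coupling, and the lemma's uniformity over \(\widehat\tau_t\) absorbs the adaptivity. The dimension requirement \eqref{eq:packet-dimension-v12} with \(N=N_0\) and \(\zeta=(3T)^{-1}\) is what makes each lemma applicable. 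Because it is fixed before the algorithm runs, it is legitimate for every \(N\le N_0\).
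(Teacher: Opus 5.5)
Your proposal is correct and follows the paper's proof essentially step for step. The shared steps are: the tower property plus a union bound over the events \(\mathcal B_t\) with \(\zeta=(3T)^{-1}\); chaining the increments \(\tau_{t+1}\ge\tau_t+cq-C_0\), which, as you say, are statements about the real transcript, so no independence between packets is needed; a contradiction with Lemma~\ref{lem:handle-relay-stationarity}; and averaging over the frames to fix one instance independently of \(\xi\).

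The only difference is cosmetic. You read the contradiction directly off the chain: with the \(\tau=\infty\) convention, \(\tau_T+cq-C_0>N+1\) forces the terminal block \(x_T\) out of \(B(x_T^\star,b_0\ell)\). The paper instead argues forward from stationarity and uses Lemma~\ref{lem:packet-real-virtual}(i) to force genuine activation of every packet. Your vague ``some packet \(t\)'' should simply be \(t=T\), or more generally the first \(t\) with \(\tau_t+cq-C_0>N+1\).
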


\begin{proof}
Work first under the product law
\eqref{eq:product-probability-space}.  For packet \(t\), apply
Lemma~\ref{lem:private-handle-first-hit} with conditional failure
probability \((3T)^{-1}\).  Since the estimate holds almost surely given
\(\mathcal G_t\), the tower property gives the same unconditional bound.
A union bound, without any independence assumption between the good
events, yields with probability at least \(2/3\)
\[
 \tau_{t+1}\ge\tau_t+cq-C_0,
 \qquad 1\le t\le T.\label{eq:serial-increments-v12}
\]

If the algorithm's terminal output were \(\eps\)-stationary,
Lemma~\ref{lem:handle-relay-stationarity} would place every output block
inside its packet chamber.  On the good event, a terminal hit of an
unactivated packet is impossible by
Lemma~\ref{lem:packet-real-virtual}(i); hence every packet must have
been activated by a genuine query.  For the last packet the appended
output is the hit at time \(N+1\).  Iterating
\eqref{eq:serial-increments-v12} therefore gives
\[
 N+1\ge T(cq-C_0).\label{eq:terminal-serial-count}
\]
For \(q\ge q_0\) the right side is at least \(c'Tq\).  Thus, if
\(N<c'Tq\), the joint probability over frames and the algorithmic seed of
success is at most \(1/3\).

Let \(p(\Omega_1,\ldots,\Omega_T)\) be the success probability over
\(\xi\) for the deterministic instance specified by the frame tuple.
The preceding bound states
\[
 \mathbb E_{\Omega_1,\ldots,\Omega_T}p(\Omega_1,\ldots,\Omega_T)
 \le\frac13.
\]
Hence some deterministic frame tuple satisfies \(p\le1/3\).  This tuple
is selected before, and independently of, the realized algorithmic
seed.  It is the required fixed hard instance.
\end{proof}

\begin{theorem}[Private-handle dual-relay lower bound]
\label{thm:private-handle-composition}
Assume the nontrivial regime \(A_1=\Delta L/\eps^2\ge C\) and \(r\ge1\).
In unrestricted dimension, every randomized bundled full-Hessian
algorithm that succeeds with probability at least \(2/3\) on every fixed
instance requires
\[
 \boxed{
 \Mrand(\eps)
 \ge cA_1\min\{r^{2/7},\sqrt\kap\}.
 }\label{eq:private-handle-lower}
\]
The hard family admits exact initialization \(R_y=0\), and the fixed
instance is independent of the algorithm's realized random seed.
\end{theorem}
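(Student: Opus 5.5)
The proof assembles the already-established pieces: the packet alignment (Corollary~\ref{cor:handle-alignment}), the relay validity (Lemma~\ref{lem:handle-relay-regularity}), the stationarity-opens-gates lemma (Lemma~\ref{lem:handle-relay-stationarity}), and the fixed-instance serial product theorem (Theorem~\ref{thm:private-handle-product}). It then chooses the number of packets $T$ so that the initial gap budget is exactly consumed. First we split into the degenerate and nondegenerate regimes. If $c_q\min\{r^{2/7},\sqrt\kap\}<q_0$, the multiplier $\min\{r^{2/7},\sqrt\kap\}$ is bounded by a universal constant. In that case the all-derivative baseline \eqref{eq:all-derivative-baseline} already gives $\Omega(A_1)$ for $r\ge1$, which is the claim after shrinking $c$. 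Otherwise we set $q$ as in \eqref{eq:handle-alignment}, so $q\ge q_0$ and, by Corollary~\ref{cor:handle-alignment}, each packet respects the budgets $\nu q^2\lesssim L$ and $\rho_q\le\rho$, with per-packet gap scale $E=\eps^2/L$.

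Next we choose $T=\lfloor c_T\Delta/E\rfloor=\lfloor c_TA_1\rfloor$. The hypothesis $A_1\ge C$ ensures $T\ge1$ and $T\asymp A_1$. With this choice, Lemma~\ref{lem:handle-relay-regularity} shows that $F_T$ in \eqref{eq:handle-relay-family} is jointly $L$-smooth, has a $\rho$-Lipschitz Hessian, and is $\mu$-strongly concave in $(\mathbf y,\mathbf s)$. It also gives $\Phi_T(\mathbf x_0)-\inf\Phi_T\le CTE\le\Delta$ by the choice of $c_T$, in accordance with the constant hierarchy of Lemma~\ref{lem:private-constant-hierarchy}. For exact initialization $R_y=0$, we take $\mathbf x_0$ with every future gate on its zero plateau, the first packet at its inactive-center initialization, and $\mathbf y_0,\mathbf s_0$ equal to the exact inner maximizer at $\mathbf x_0$. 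This is legal because the inner maximizer is a deterministic public function of the instance; its coordinates lie in public directions at $\mathbf x_0$, where every hidden term is flat. It must be checked that this point does not leak private frames. I would argue this from the orientation invariance of the normalizing constants $m_a$ and the fact that inactive endpoints are minimized at $0$.

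Then we fix an arbitrary randomized algorithm with budget $N_0$ and take the ambient block dimensions large enough for \eqref{eq:packet-dimension-v12} with $\zeta=(3T)^{-1}$; this is allowed because the dimension is unrestricted. Theorem~\ref{thm:private-handle-product} yields a deterministic frame tuple, chosen independently of the seed, on which any algorithm with $N<c'Tq$ queries fails with probability at least $2/3$. Since success requires probability $2/3$ on every fixed instance, we get $\Mrand(\eps)\ge c'Tq\ge cA_1\min\{r^{2/7},\sqrt\kap\}$, using $q\asymp\min\{r^{2/7},\sqrt\kap\}$ from the floor and $q\ge q_0$.

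The main obstacle is the quantifier bookkeeping: the dimension must be fixed before the hard instance is selected, yet it depends on the query budget $N_0$. I would handle this by taking $N_0=\lceil c'Tq\rceil$ itself. An algorithm that uses fewer queries is padded with dummy queries, which does not affect the argument since its outputs are functions of the responses. This makes the dimension a function of the problem parameters alone, so the instance depends only on $(L,\mu,\rho,\Delta,\eps)$ and the fixed algorithm, never on the realized seed.
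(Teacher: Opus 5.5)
Your counting argument is the paper's. It uses the same degenerate/nondegenerate split via \(q_0\), \(T=\lfloor c_TA_1\rfloor\), \(q=\lfloor c_q\min\{r^{2/7},\sqrt\kap\}\rfloor\), a budget \(N_0\asymp Tq\) fixed before the block dimensions, and Theorem~\ref{thm:private-handle-product} to extract a seed-independent frame tuple. That part is fine.

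The gap is in the exact-initialization claim \(R_y=0\), which is part of the statement. Setting \(\mathbf y_0\) equal to the inner maximizer makes \(R_y=0\) trivially; the real issue is whether handing \((\mathbf x_0,\mathbf y_0)\) to the algorithm leaks hidden columns. Your justification is that the maximizer ``lies in public directions, where every hidden term is flat.'' That is false for the first packet. It is active from round \(0\), and its bounded seed is not flat at the origin: \(\nabla h_1^1(0)=-s_q(h_1,0,0)\neq0\). This produces a dilemma:
\begin{itemize}
\item If you put \(x_{1,0}\) at the ``inactive center'' \(0\), the exact maximizer \(y_{1,0}=\argmin h_1^1\) carries the entire hidden tail of packet \(1\), and you give it to the algorithm.
\item If you insist on \(y_{1,0}=0\), you are forced to take \(x_{1,0}=\lambda^{-1}\nabla h_1^1(0)\), which points along the private handle.
\end{itemize}
Orientation invariance of \(m_a\) addresses neither case.

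What is needed, and what the paper does, is the explicit point
\[
x_{1,0}=\lambda^{-1}\nabla h_1^1(0),\quad x_{t,0}=0\ (t\ge2),\quad \mathbf y_0=0,\quad s_{1,0}=a_{\rm rel}x_{1,0},\quad s_{t,0}=0\ (t\ge2),
\]
together with three checks.
\begin{enumerate}
\item[(i)] This point reveals only \(h_1\). That handle is granted for free at activation anyway, since \(\tau_1=0\), and by Lemma~\ref{lem:amplifier-spectrum} it exposes no tail column.
\item[(ii)] Every relay gate and all its derivatives vanish at \(\mathbf s_0\). For \(t=1\), \(x_{1,0}\) has no tail component while \(x_1^\star\) has an \(\Omega(\ell)\) one. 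For \(t\ge2\), \(\norm{x_t^\star}\gtrsim\ell\). So the gate-support constant can be placed below this separation. Only then does the dual gradient vanish at \((\mathbf y_0,\mathbf s_0)\), making it the exact maximizer by global strong concavity. Only then are packets \(t\ge2\) not activated at time \(0\).
\item[(iii)] The gap bound must be checked at this point. Lemma~\ref{lem:handle-relay-regularity} gives \(CTE\) only for this specific initialization. The first packet's extra contribution is \(O(L\ell^2)=O(E)\), because \(\Phi_1^1\) is \(O(L)\)-smooth and \(\norm{x_{1,0}-x_1^\star}=O(\ell)\).
\end{enumerate}
You should also note that in the degenerate regime the baseline lift satisfies \(y^\star(0)=0\), so \(R_y=0\) holds there as well.
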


\begin{proof}
Put \(m_q:=\min\{r^{2/7},\sqrt\kap\}\).  If
\(c_qm_q<q_0\), then \(m_q<q_0/c_q\) is bounded by a universal
constant, so the independent all-derivative baseline
\eqref{eq:all-derivative-baseline} proves the displayed bound (with exact
dual initialization) after adjusting its universal constant.  Hence assume
\(c_qm_q\ge q_0\), and take
\[
 T=\left\lfloor c_TA_1\right\rfloor,
 \qquad
 q=\left\lfloor c_qm_q\right\rfloor\ge q_0.\label{eq:handle-Tq}
\]
Set \(N_0=\lfloor c_0Tq\rfloor\) with a sufficiently small
universal \(c_0\), and choose the block dimensions required by
Theorem~\ref{thm:private-handle-product} for this target budget.  Lemmas
\ref{lem:handle-relay-regularity} and
\ref{lem:handle-relay-stationarity} give class membership, the gap
budget, and the implication from stationarity to all packet chambers.
The product theorem yields \(\Omega(Tq)\) queries, which is the displayed
rate.

For exact initialization set
\[
 x_{1,0}=\lambda^{-1}\nabla h_1^1(0),
 \qquad x_{t,0}=0\ (t\ge2),
 \qquad y_{t,0}=0,\label{eq:handle-exact-init-x}
\]
\[
 s_{1,0}=a_{\rm rel}x_{1,0},
 \qquad s_{t,0}=0\ (t\ge2).\label{eq:handle-exact-init-s}
\]
The first relation satisfies the active packet inner equation at the
origin, all inactive packet gradients vanish there, and the gate-support
constant is chosen below the fixed unresolved-tail separation.  Thus all
relay gates and their derivatives vanish at the displayed point.
Global dual strong concavity makes \((\mathbf y_0,\mathbf s_0)\) the exact
inner maximizer, so \(R_y=0\).  The first active value function is
\(O(L)\)-smooth and the initial primal displacement is \(O(\ell)\), giving
additional gap \(O(L\ell^2)=O(E)\).  Together with
\eqref{eq:handle-relay-gap} and \(T\asymp A_1\), this respects the initial
gap \(\Delta\).
\end{proof}

\begin{corollary}[Randomized lower frontier]
\label{cor:complete-lower}
In the nontrivial regime \(A_1\ge C\), the exact bundled full-Hessian model
satisfies
\[
 \boxed{
 \Mrand(\eps)\ge
 cA_1\Psi_\kap(r)
 }\label{eq:complete-lower}
\]
for every \(r>0\), with exact initialization \(R_y=0\).  Here
\(\Psi_\kap\) is \eqref{eq:Psi}.
\end{corollary}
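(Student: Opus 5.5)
The corollary follows by combining the two lower bounds already proved, splitting on the value of \(r\) according to the three branches of \(\Psi_\kap\) in \eqref{eq:Psi}.

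\emph{Case \(r\le1\).} Here \(\Psi_\kap(r)=r^{1/2}\). The target is \(cA_1r^{1/2}=cA_2\). The all-derivative baseline \eqref{eq:all-derivative-baseline} gives
\[
 \Mrand(\eps)\ge c\,\Delta\min\{L\eps^{-2},\sqrt\rho\,\kap^{3/4}\eps^{-3/2}\}=c\min\{A_1,A_2\}=cA_1\min\{1,r^{1/2}\},
\]
which equals \(cA_1 r^{1/2}\) when \(r\le1\). Some checks are needed before invoking Lemma~\ref{lem:single-lower}:
\begin{itemize}
\item its threshold \(T\ge C\) must hold, i.e. \(c\min\{A_1,A_2\}\) must exceed a universal constant;
\item the lift must have exact dual initialization, which holds because \(y^\star(0)=0\) by Lemma~\ref{lem:lift} (and trivially for the decoupled lift when \(\kap<6\));
\item the instance must be chosen independently of the seed, which Lemma~\ref{lem:single-lower} supplies for arbitrary randomized algorithms.
\end{itemize}
If \(A_2\) is below the threshold, the claimed bound \(cA_1r^{1/2}=cA_2\) is itself at most a universal constant. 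It then holds trivially after shrinking \(c\), because any successful algorithm must make at least one query: with zero queries the output is independent of the instance, and an instance can be shifted so that this output fails.

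\emph{Case \(r\ge1\).} Here \(\Psi_\kap(r)=\min\{r^{2/7},\sqrt\kap\}\). This is exactly the quantity in Theorem~\ref{thm:private-handle-composition}. That theorem already covers the degenerate subcase \(c_q m_q<q_0\) through the baseline, and provides \(R_y=0\) and the seed-independent instance under \(A_1\ge C\). At \(r=1\) the two formulas \(r^{1/2}\) and \(\min\{r^{2/7},\sqrt\kap\}\) both equal \(1\), so the cases overlap consistently.

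\emph{Constants.} Take \(c\) to be the minimum of the two constants and \(C\) the maximum of the two thresholds.

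The only step needing care is confirming that the \(r\le1\) branch is compatible with the hypothesis \(A_1\ge C\) alone, since the baseline lemma needs \(\min\{A_1,A_2\}\) large. I would handle this with the trivial one-query argument described in the first case.
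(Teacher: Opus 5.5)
Your proposal is correct and follows the paper's own proof: it uses the all-derivative lift \eqref{eq:all-derivative-baseline} for \(r\le1\) and Theorem~\ref{thm:private-handle-composition} for \(r\ge1\), where \(\Psi_\kap(r)=\min\{r^{2/7},\sqrt\kap\}\), and the two branches agree at \(r=1\). Your extra handling of the sub-threshold case \(A_2\lesssim1\), which the paper's two-line proof passes over, is also valid: since \(A_1\ge C\), the two decoupled instances \(\tfrac L2\norm{x-x_\pm}^2-\tfrac\mu2\norm y^2\) with \(x_\pm=x_0\pm\sqrt{2\Delta/L}\,e\) are legal with \(R_y=0\) and have disjoint \(\eps\)-stationary balls, so an algorithm that makes no queries must fail on one of them, giving \(\Mrand(\eps)\ge1\).
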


\begin{proof}
For \(r\le1\), use the all-derivative lift
\eqref{eq:all-derivative-baseline}.  For \(r\ge1\), use
Theorem~\ref{thm:private-handle-composition}.  The two expressions agree
at \(r=1\), and the packet multiplier saturates at \(\sqrt\kap\) when
\(r=\kap^{7/4}\).
\end{proof}

\bibliographystyle{unsrtnat}
\bibliography{references}
\end{document}